\documentclass[3p,times]{elsarticle}


\journal{Elsevier}










\bibliographystyle{model1b-num-names}

\usepackage{amsmath}
\usepackage{mathrsfs} 
\usepackage{tikz}
\usepackage{gnuplot-lua-tikz}
\newcommand{\domD}{\mathscr{D}}
\renewcommand{\pi}{\piup}
\renewcommand{\Re}{\operatorname{Re}}
\renewcommand{\Im}{\operatorname{Im}}
\DeclareMathOperator{\arsinh}{arsinh}
\DeclareMathOperator{\OO}{O}
\makeatletter
\newif\ifkp@upRm
\DeclareSymbolFont{Letters}{OML}{jkp}{m}{it}
\DeclareMathSymbol{\uppartial}{\mathord}{Letters}{128}
\makeatother
\DeclareMathOperator{\E}{e}
\DeclareMathOperator{\I}{i}

\newcommand{\D}{\,\mathrm{d}}

\newdefinition{definition}{Definition}[section]
\newtheorem{theorem}{Theorem}[section]
\newtheorem{lemma}[theorem]{Lemma}
\newtheorem{proposition}[theorem]{Proposition}

\newtheorem{remark}{Remark}[section]

\newproof{proof}{Proof}
\numberwithin{equation}{section}


\begin{document}

\begin{frontmatter}

\title{DE-Sinc approximation for unilateral rapidly decreasing functions and its computable error bound~\tnoteref{mytitlenote}}
\tnotetext[mytitlenote]{This work was partially supported by the JSPS
Grant-in-Aid for Scientific Research (C) JP23K03218.}

\author[HCU]{Tomoaki Okayama\corref{cor1}}
\cortext[cor1]{Corresponding author}
\affiliation[HCU]{organization={Hiroshima City University},
addressline={3-4-1, Ozuka-higashi, Asaminami-ku},
city={Hiroshima},
postcode={731-3194},
country={Japan}}
\ead{okayama@hiroshima-cu.ac.jp}






\begin{abstract}
The Sinc approximation is highly effective for functions that decay
rapidly at both ends of the real axis.
For unilateral rapidly decreasing functions, which decay algebraically
as $t\to-\infty$ and exponentially as $t\to\infty$, an appropriate
variable transformation is required.
Existing single-exponential transformations for this class of functions
yield only root-exponential convergence, even when improved transformations
are employed.
This paper develops a double-exponential (DE)-Sinc approximation based on
the transformation
$t=2\sinh(\log(\log(1+\exp(\pi\sinh x))))$,
which was previously introduced for numerical integration.
The main contribution is a rigorous and computable error bound of order
$\OO(\exp(-cn/\log n))$ with a constant explicitly expressed in terms of
the problem parameters.
Under the stated assumptions, the resulting approximation achieves
almost exponential convergence and is suitable for computation with
guaranteed accuracy.
Numerical examples satisfying these assumptions confirm the predicted
convergence behavior and the validity of the derived error bound.
\end{abstract}

\begin{keyword}
Sinc approximation
\sep double-exponential transformation
\sep unilateral rapidly decreasing function
\sep computation with guaranteed accuracy
\MSC[2010] 65D05, 65D15
\end{keyword}

\end{frontmatter}

\section{Introduction}
\label{sec:intro}

The Sinc approximation on the real axis is expressed as
\begin{equation}
\label{eq:Sinc-approximation}
 F(x) \approx \sum_{k=-M}^N F(kh)S(k,h)(x),\quad x\in\mathbb{R},
\end{equation}
where $h$ is the mesh size,
$M$ and $N$ are the truncation numbers,
and $S(k, h)$ is the so-called Sinc function, defined by
\[
 S(k,h)(x)=
\begin{cases}
\dfrac{\sin[\pi(x - kh)/h]}{\pi(x - kh)/h} & (x\neq kh), \\
1 & (x = kh).
\end{cases}
\]
The approximation~\eqref{eq:Sinc-approximation} is particularly
effective when $F$ is analytic in a strip around the real axis and
decays rapidly as $x\to\pm\infty$.
When the target function $f$ does not have this decay property,
an appropriate variable transformation can be used to produce a
rapidly decreasing transformed function.
Stenger~\cite{stenger93:_numer,Stenger} classified the following five
typical cases according to the target interval $(a,b)$ and the decay
behavior of $f$:
\begin{enumerate}
 \item the interval $(a,b)$ is finite,
 \item $(a, b)=(0,\infty)$ and $|f(t)|$ decays algebraically as $t\to\infty$,
 \item $(a, b)=(0,\infty)$ and $|f(t)|$ decays exponentially as $t\to\infty$,
 \item $(a, b)=(-\infty,\infty)$ and $|f(t)|$ decays algebraically as $t\to\pm\infty$,
 \item $(a, b)=(-\infty,\infty)$ and $|f(t)|$ decays algebraically as $t\to-\infty$ and exponentially as $t\to\infty$.
\end{enumerate}
In this paper, a function in case~5 is called a unilateral rapidly
decreasing function.
Such a function appears, for example, as the exact solution in
Example~7.4 of ODE-IVP-PACK~\cite{StengerODEIVP}.
For the five cases listed above, Stenger proposed the transformations
$t=\psi_i(x)$ $(i=1,\,\ldots,\,5)$ given by
\begin{align*}
 t &= \psi_1(x)
    = \frac{b - a}{2}\tanh\left(\frac{x}{2}\right)+\frac{b + a}{2},\\
 t &= \psi_2(x) = \E^x,\\
 t &= \psi_3(x) = \arsinh(\E^x),\\
 t &= \psi_4(x) = \sinh x,\\
 t &= \psi_5(x) = \sinh(\log(\arsinh(\E^x))).
\end{align*}
Combining the Sinc approximation~\eqref{eq:Sinc-approximation} with
$t=\psi_i(x)$ gives
\begin{align}
  f(\psi_i(x))&\approx \sum_{k=-M}^N f(\psi_i(kh))S(k,h)(x),
\quad x\in \mathbb{R},\nonumber \\
\intertext{which is equivalent to}
 f(t)&\approx \sum_{k=-M}^N f(\psi_i(kh))S(k,h)(\psi_i^{-1}(t)),
\quad t\in (a,b).
\label{eq:psi-Sinc-approximation}
\end{align}
For all five cases, this formula can attain root-exponential
convergence of order $\OO(\exp(-c\sqrt{n}))$,
where $n=\max\{M, N\}$.
Owing to this high efficiency, numerical methods based on
\eqref{eq:psi-Sinc-approximation} have been developed in a variety of
areas~\cite{lund92:_sinc_method_quadr_differ_equat,stenger93:_numer,stenger00:_summar,Stenger},
which are collectively called Sinc numerical methods.

For cases $i=1,\,\ldots,\,4$, subsequent studies derived explicit
error bounds~\cite{OkayamaSinc,Okayama-et-al} of the form
\begin{equation}
 \sup_{t\in (a,b)}\left|
 f(t) - \sum_{k=-M}^N f(\psi_i(kh))S(k,h)(\psi_i^{-1}(t))
\right|
\leq C_i \sqrt{n}\E^{-\sqrt{\pi d \mu n}},
\label{eq:SE-Sinc-error-bound}
\end{equation}
where the positive parameters $d$ and $\mu$ describe the analyticity
and decay properties of the transformed function, respectively.
The constant $C_i$ is explicitly expressed in terms of the problem
parameters, so the right-hand side provides a computable rigorous
error bound suitable for algorithms with guaranteed accuracy.
Improved transformations for cases $i=3$ and $i=5$ were later
proposed~\cite{OkaShinKatsu,okayama21:_improv_sinc} as
\begin{align*}
 t &= \tilde{\psi}_3(x) = \log(1 + \E^x),\\
 t &= \tilde{\psi}_5(x) = 2\sinh(\log(\log(1 + \E^x))).
\end{align*}
These transformations allow a larger value of $d$ than the original
transformations $\psi_3$ and $\psi_5$, thereby improving the error
bound.  Nevertheless, the asymptotic convergence order remains
root-exponential.

For cases $i=1,\,\ldots,\,4$, substantially faster convergence is
obtained by using the transformations
\begin{align*}
 t &= \phi_1(x)
    = \frac{b-a}{2}\tanh\left(\frac{\pi}{2}\sinh x\right) +\frac{b+a}{2},\\
 t &= \phi_2(x) = \E^{(\pi/2)\sinh x},\\
 t &= \phi_3(x) = \log(1 + \E^{(\pi/2)\sinh x}),\\
 t &= \phi_4(x) = \sinh\left((\pi/2)\sinh x\right).
\end{align*}
These are called the double-exponential (DE) transformations because the
transformed functions decay double exponentially as $x\to\pm\infty$.
They were originally introduced for numerical
integration~\cite{muhammad03:_doubl,takahasi74:_doubl}.
By contrast, transformations that produce single-exponential decay are
called the single-exponential (SE) transformations.
The combination of a DE transformation with
\eqref{eq:Sinc-approximation}, called the DE-Sinc approximation, can
attain almost exponential convergence of order
$\OO(\exp(-c n/\log n))$~\cite{tanaka07:_class_of_funct}.
Its optimality in a certain sense has also been
established~\cite{sugihara02:_near_optim}, and DE-Sinc numerical methods
have been developed in various
areas~\cite{Mori,mori01:_doubl_expon,SugiharaMatsuo}.

Recent research continues to develop approximation methods based on
variable transformations and Sinc techniques in a range of settings.
Examples include log-orthogonal approximations for slowly decaying
functions on semi-infinite intervals~\cite{ChenShen2023}, DE-Sinc
collocation for high-order boundary value problems~\cite{QiuEtAl2022},
transformed Sinc approximation for periodic functions on finite
intervals~\cite{Ogata2025}, and Sinc quadrature with sharp error
estimates for Stieltjes functions on finite and infinite
intervals~\cite{BraessHackbusch2026}.
Although these problems differ from the unilateral-decay problem
considered here, they demonstrate the continuing relevance of
transformation-based approximation and Sinc techniques in contemporary
numerical analysis.

For the DE-Sinc approximations based on $\phi_i$ in cases
$i=1,\,\ldots,\,4$, rigorous error bounds with explicit constants
were obtained in~\cite{OkayamaSinc} as
\begin{equation}
 \sup_{t\in (a,b)}\left|
 f(t) - \sum_{k=-M}^N f(\phi_i(kh))S(k,h)(\phi_i^{-1}(t))
\right|
\leq C_i \E^{-\pi d n/\log(2 d n/\mu)},
\label{eq:DE-Sinc-error-bound}
\end{equation}
where, for case $i=3$, the improved DE transformation
\[
 t = \tilde{\phi}_3(x) = \log(1 + \E^{\pi\sinh x})
\]
was used.

In contrast to cases $i=1,\,\ldots,\,4$, a DE-Sinc approximation with
a computable error bound had not been established for case~5.
This paper fills this gap by employing the DE transformation
\[
 t = \phi_5(x) = 2\sinh(\log(\log(1 + \E^{\pi\sinh x}))),
\]
which was previously proposed for numerical
integration~\cite{okayama22:_doubl}.
Our main result is a rigorous error bound of the
form~\eqref{eq:DE-Sinc-error-bound}, with a constant explicitly
expressed in terms of the problem parameters.
It establishes almost exponential convergence for the DE-Sinc
approximation of unilateral rapidly decreasing functions and enables
computation with guaranteed accuracy.
This improvement is also substantial in terms of computational cost.
Apart from multiplicative constants and lower-order factors, attaining
an accuracy $\varepsilon$ requires
$n=\OO((\log(1/\varepsilon))^2)$ for the SE-Sinc approximation, whereas
the proposed DE-Sinc approximation requires only
$n=\OO(\log(1/\varepsilon)\log\log(1/\varepsilon))$.

The proof of the main result is not obtained by simply substituting $\phi_5$ into existing
DE-Sinc estimates: the nested logarithms in $\phi_5$, its branch
structure in the complex plane, and the asymmetric behavior at the two
ends of the real axis require new estimates on the strip domain.
These estimates yield explicit bounds for both the discretization and
truncation errors and are the main technical ingredients of the present
analysis.
Since the Sinc approximation is a basic component of quadrature and
collocation methods, the present theorem may also serve as an analytical
foundation for future guaranteed-accuracy DE-Sinc algorithms for
problems involving unilateral decay.
Numerical examples confirm both the predicted convergence behavior and
the validity of the derived error bound.

The remainder of this paper is organized as follows.
Section~\ref{sec:main_theorem} summarizes the existing approximation
formulas and states the new error estimate.
Section~\ref{sec:numer} presents numerical examples, and
Section~\ref{sec:proofs} proves the new result.
Section~\ref{sec:conclusion} presents concluding remarks.

\section{Summary of existing and new theorems}
\label{sec:main_theorem}

Section~\ref{subsec:Stenger-result} describes existing theorems,
and Section~\ref{subsec:our-result} describes a new theorem
(the main result of this paper).
To state these theorems,
some notation is introduced.
Let $\domD_d$ be a strip domain
defined by $\domD_d=\{\zeta\in\mathbb{C}:|\Im \zeta|< d\}$
for $d>0$.
Furthermore, let
$\domD_d^{-}=\{\zeta\in\domD_d :\Re\zeta< 0\}$
and
$\domD_d^{+}=\{\zeta\in\domD_d :\Re\zeta\geq 0\}$.
When $\psi_5$, $\tilde{\psi}_5$, and $\phi_5$ are evaluated at
complex arguments, they are understood as their analytic
continuations from the real axis to $\domD_d$.
The branches used for $\phi_5$ are described explicitly in
Section~\ref{sec:proofs}.
Throughout this paper, $n$ denotes a positive integer that controls
the truncation numbers $M$ and $N$.
The definitions of $M$ and $N$ in the following theorems ensure that
$n=\max\{M,N\}$,
and the number of sampling points is $M+N+1$.

\subsection{Existing results}
\label{subsec:Stenger-result}

An error analysis of the Sinc approximation
combined with $t=\psi_5(x)$ can be expressed as
the following theorem,
which is essentially the same as
the existing theorem~\cite[Theorem 1.5.13]{Stenger}.

\begin{theorem}
\label{thm:Stenger-2}
Assume that $f$ is analytic in $\psi_5(\domD_d)$ with $0<d\leq\pi/2$,
and that there exist positive constants
$K_{+}$, $K_{-}$, $\alpha$ and $\beta$
such that
\begin{align}
 |f(z)|&\leq K_{-} |z|^{-\alpha}
\label{eq:f-bound-minus}
\intertext{holds for all $z\in\psi_5(\domD_d^{-})$, and}
 |f(z)|&\leq K_{+} |\E^{-z}|^{2\beta}\nonumber
\end{align}
holds for all $z\in\psi_5(\domD_d^{+})$.
Let $\mu = \min\{\alpha,\beta\}$,
let $n$ be a positive integer,
let $M$ and $N$ be defined by
\begin{equation}
M = \left\lceil\frac{\mu}{\alpha}n\right\rceil,
\quad
N = \left\lceil\frac{\mu}{\beta }n\right\rceil,
\label{eq:Def-MN}
\end{equation}
and let $h$ be defined by
\begin{equation}
h = \sqrt{\frac{\pi d}{\mu n}}.
\label{eq:Def-h}
\end{equation}
Then, there exists a constant $C$ independent of $n$ such that
\[
 \sup_{t\in \mathbb{R}}\left|
 f(t)
- \sum_{k=-M}^N f(\psi_5(kh))S(k,h)(\psi_5^{-1}(t))
\right|
\leq C \sqrt{n}\E^{-\sqrt{\pi d \mu n}}.
\]
\end{theorem}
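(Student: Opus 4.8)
The plan is to set $g = f\circ\psi_5$ and reduce the statement to the classical error analysis of the Sinc approximation of $g$ on $\mathbb{R}$, splitting the total error into a \emph{discretization} part (approximating $g$ by its full bi-infinite cardinal series) and a \emph{truncation} part (restricting to $-M\leq k\leq N$). Since $t=\psi_5(x)$ maps $\domD_d$ into $\psi_5(\domD_d)$, on which $f$ is analytic by hypothesis, the composite $g$ is analytic on $\domD_d$, and a first task is to verify that $g$ lies in the Hardy-type space $\mathbf{H}^1(\domD_d)$ on which the standard discretization estimate holds. Writing, with $x=\psi_5^{-1}(t)$,
\begin{align*}
&f(t) - \sum_{k=-M}^N f(\psi_5(kh))S(k,h)(\psi_5^{-1}(t)) \\
&\quad = \Bigl[g(x) - \sum_{k=-\infty}^{\infty}g(kh)S(k,h)(x)\Bigr]
 + \sum_{k=-\infty}^{-M-1}g(kh)S(k,h)(x)
 + \sum_{k=N+1}^{\infty}g(kh)S(k,h)(x)
\end{align*}
isolates the two contributions cleanly.

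The key preparatory step, and the one I expect to be the main obstacle, is to convert the two hypotheses on $f$ into genuine decay estimates for $g$ that are uniform across the strip. On the left half-strip $\domD_d^-$ one has $|g(\zeta)|=|f(\psi_5(\zeta))|\leq K_-|\psi_5(\zeta)|^{-\alpha}$, and since $\psi_5(\zeta)$ grows in modulus like $\tfrac12\E^{|\Re\zeta|}$ as $\Re\zeta\to-\infty$, this should yield $|g(\zeta)|=\OO(\E^{-\alpha|\Re\zeta|})$; on the right half-strip $\domD_d^+$ one has $|g(\zeta)|\leq K_+\E^{-2\beta\Re\psi_5(\zeta)}$ with $\Re\psi_5(\zeta)\sim\tfrac12\Re\zeta$, giving $|g(\zeta)|=\OO(\E^{-\beta\Re\zeta})$. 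Making these rigorous is delicate: it requires controlling the composite $\psi_5=\sinh\circ\log\circ\arsinh\circ\exp$ uniformly in the imaginary direction, checking that $\psi_5$ is well defined and analytic on all of $\domD_d$ for $0<d\leq\pi/2$ (the restriction on $d$ should be exactly what keeps $\E^{\zeta}$ in the right half-plane, so that $\arsinh(\E^{\zeta})$ avoids its branch points and the outer logarithm stays single-valued), and confirming that $\Re\psi_5(\zeta)$ retains the correct sign so that the appropriate one of the two bounds applies on each half-strip. These single-exponential decay estimates simultaneously furnish the finiteness of $N_1(g,\domD_d)$ required for the discretization bound and the geometric majorant required for the truncation bound.

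With those estimates in hand, the remaining steps are routine. For the discretization error I would invoke the standard Sinc bound of order $\E^{-\pi d/h}/\{2\pi d(1-\E^{-2\pi d/h})\}\cdot N_1(g,\domD_d)$; for the truncation error I would use $|S(k,h)(x)|\leq 1$ and sum the geometric tails, obtaining a bound of order $\E^{-\alpha Mh}/(1-\E^{-\alpha h})+\E^{-\beta Nh}/(1-\E^{-\beta h})$. Finally I substitute the prescribed $h=\sqrt{\pi d/(\mu n)}$, $M=\lceil(\mu/\alpha)n\rceil$, $N=\lceil(\mu/\beta)n\rceil$, so that $\pi d/h=\sqrt{\pi d\mu n}$ while $\alpha Mh\geq\mu nh=\sqrt{\pi d\mu n}$ and $\beta Nh\geq\sqrt{\pi d\mu n}$; hence all three terms are $\OO(\E^{-\sqrt{\pi d\mu n}})$. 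The prefactor $1/(1-\E^{-2\pi d/h})$ tends to $1$, whereas the truncation prefactors $1/(1-\E^{-\alpha h})$ and $1/(1-\E^{-\beta h})$ are $\OO(1/h)=\OO(\sqrt{n})$, and collecting these contributions produces the stated bound $C\sqrt{n}\,\E^{-\sqrt{\pi d\mu n}}$ with $C$ independent of $n$.
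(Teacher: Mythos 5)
Your proposal follows the standard discretization-plus-truncation decomposition together with single-exponential decay estimates for $f\circ\psi_5$ on the strip, which is exactly the route of the proof the paper defers to (Stenger's Theorem~1.5.13) and the same template the paper itself uses for Theorem~\ref{thm:New1}; your asymptotics $|\psi_5(\zeta)|\sim\tfrac12\E^{|\Re\zeta|}$ on $\domD_d^{-}$ and $\Re\psi_5(\zeta)\sim\tfrac12\Re\zeta$ on $\domD_d^{+}$ are correct and correctly account for the exponent $2\beta$ in the hypothesis, for the $\OO(\sqrt{n})$ prefactor coming from the truncation tails, and for the balancing of $h$, $M$, $N$. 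The uniform-in-$\Im\zeta$ control of $\psi_5$ that you flag as the main obstacle is indeed the only part left to execute, but since the theorem asserts only the existence of a constant $C$, your plan is sound as stated.
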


For the improved transformation $\tilde{\psi}_5$,
the following theorem was established.

\begin{theorem}[Okayama and Shiraishi~{\cite[Theorem~2]{okayama21:_improv_sinc}}]
\label{thm:Stenger-4}
Assume that $f$ is analytic in $\tilde{\psi}_5(\domD_d)$ with $0<d<\pi$,
and that there exist positive constants
$K_{+}$, $K_{-}$, $\alpha$ and $\beta$
such that~\eqref{eq:f-bound-minus}
holds for all $z\in\tilde{\psi}_5(\domD_d^{-})$,
and
\begin{equation}
 |f(z)|\leq K_{+} |\E^{-z}|^{\beta}
\label{eq:f-bound-plus}
\end{equation}
holds for all $z\in\tilde{\psi}_5(\domD_d^{+})$.
Let $\mu = \min\{\alpha,\beta\}$,
let $n$ be a positive integer,
let $M$ and $N$ be defined by~\eqref{eq:Def-MN},
and let $h$ be defined by~\eqref{eq:Def-h}.
Then, we have
\[
 \sup_{t\in \mathbb{R}}\left|
 f(t)
- \sum_{k=-M}^N f(\tilde{\psi}_5(kh))S(k,h)(\tilde{\psi}_5^{-1}(t))
\right|
\leq
\left[
\frac{2 \tilde{C}_{\mathrm{D}}}{\pi d (1 - \E^{-2\sqrt{\pi d \mu}})}
+ \tilde{C}_{\mathrm{T}}\sqrt{\frac{\mu}{\pi d}}
\right]
 \sqrt{n}\E^{-\sqrt{\pi d \mu n}},
\]
where $\tilde{C}_{\mathrm{D}}$ and $\tilde{C}_{\mathrm{T}}$
are constants defined by
\begin{align*}
 \tilde{C}_{\mathrm{D}}
&=\frac{K_{-}}{\alpha}
\left[\frac{\E}{(1 - \log 2)(\E - 1)\cos(d/2)}\right]^{\alpha}
+\frac{K_{+}}{\beta}\left[\frac{\E^{1/\log 2}}{\cos(d/2)}\right]^{\beta},\\
\tilde{C}_{\mathrm{T}}
&=\frac{K_{-}}{\alpha}\left(\frac{1}{1 - \log 2}\right)^{\alpha}
+\frac{K_{+}}{\beta}\left(\E^{1/\log 2}\right)^{\beta}.
\end{align*}
\end{theorem}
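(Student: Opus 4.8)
The plan is to transfer the whole estimate to the single function $g=f\circ\tilde\psi_5$ and then to run the strip-analytic Sinc machinery that also underlies Theorem~\ref{thm:Stenger-2}. Writing $x=\tilde\psi_5^{-1}(t)$, the left-hand side equals $\sup_{x\in\mathbb R}\bigl|g(x)-\sum_{k=-M}^{N}g(kh)S(k,h)(x)\bigr|$, so it is enough to control $g$. A useful preliminary is the closed form $\tilde\psi_5(\zeta)=L(\zeta)-1/L(\zeta)$ with $L(\zeta)=\log(1+\E^{\zeta})$, which follows from $2\sinh(\log w)=w-1/w$. Since $1+\E^{\zeta}$ neither vanishes nor meets the branch cut as long as $|\Im\zeta|<\pi$, the map $L$, and hence $\tilde\psi_5$, is analytic on $\domD_d$ for every $0<d<\pi$; combined with the hypothesis that $f$ is analytic on $\tilde\psi_5(\domD_d)$ this makes $g$ analytic on $\domD_d$, which is the entry requirement for the framework.

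Next I would split the error into a discretization part and a truncation part. The discretization part is, up to an absolute constant, of the form $\dfrac{N_1(g,d)}{\pi d}\cdot\dfrac{\E^{-\pi d/h}}{1-\E^{-2\pi d/h}}$, governed by the boundary size $N_1(g,d)=\int_{\mathbb R}\bigl(|g(x+\I d)|+|g(x-\I d)|\bigr)\D x$, while the truncation part is $\sum_{k<-M}|g(kh)|+\sum_{k>N}|g(kh)|$, governed by the decay of $g$ along $\mathbb R$. The decay inputs are obtained by pushing the two hypotheses on $f$ through $\tilde\psi_5$: on $\domD_d^{-}$ one gets $|g(\zeta)|\leq K_{-}|\tilde\psi_5(\zeta)|^{-\alpha}$, and on $\domD_d^{+}$ one gets $|g(\zeta)|\leq K_{+}\E^{-\beta\Re\tilde\psi_5(\zeta)}$. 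For the truncation part only the real axis matters, and the elementary inequalities $\log(1+u)\leq u$, $\log(1+u)\geq u\log 2$ on $(0,1]$, and $\log(1+\E^{x})\geq x$ give $|\tilde\psi_5(x)|\geq(1-\log 2)\E^{-x}$ for $x\leq 0$ and $\tilde\psi_5(x)\geq x-1/\log 2$ for $x\geq 0$. These convert to $|g(x)|\leq K_{-}(1-\log 2)^{-\alpha}\E^{\alpha x}$ and $|g(x)|\leq K_{+}(\E^{1/\log 2})^{\beta}\E^{-\beta x}$ on the two half-lines; summing the two geometric tails and inserting $h$, $M$, $N$ from~\eqref{eq:Def-h} and~\eqref{eq:Def-MN} reproduces exactly the term $\tilde C_{\mathrm T}\sqrt{\mu/(\pi d)}\sqrt{n}\,\E^{-\sqrt{\pi d\mu n}}$, with the $1/\alpha$ and $1/\beta$ coming from the geometric sums.

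The main obstacle is the discretization part, where the same type of bounds must hold on the boundary lines $\Im\zeta=\pm d$ instead of on $\mathbb R$. The hard step will be to show that the real-axis estimates persist, up to a $\cos(d/2)$ loss, after the shift to $\Im\zeta=\pm d$: one needs estimates of the form $|\tilde\psi_5(x\pm\I d)|\geq\frac{(1-\log 2)(\E-1)\cos(d/2)}{\E}\E^{-x}$ for $x\leq 0$ and $\Re\tilde\psi_5(x\pm\I d)\geq x-1/\log 2+\log\cos(d/2)$ for $x\geq 0$. The geometric source of the $\cos(d/2)$ is the identity $1+\E^{\I d}=2\cos(d/2)\E^{\I d/2}$, which fixes how $L$ distorts the boundary; propagating this distortion through the nonlinear step $L\mapsto L-1/L$ while keeping the constants sharp is the delicate part. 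Integrating the two boundary bounds over $x\leq 0$ and $x\geq 0$ gives $N_1(g,d)\leq 2\tilde C_{\mathrm D}$, and then substituting $h$ from~\eqref{eq:Def-h} (so that $\pi d/h=\sqrt{\pi d\mu n}$), followed by the loosenings $1-\E^{-2\sqrt{\pi d\mu n}}\geq 1-\E^{-2\sqrt{\pi d\mu}}$ and $1\leq\sqrt n$ used to share the common factor $\sqrt n\,\E^{-\sqrt{\pi d\mu n}}$ with the truncation term, yields the first bracketed term and finishes the proof.
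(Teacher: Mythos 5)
First, note that the paper does not prove Theorem~\ref{thm:Stenger-4} at all: it is imported verbatim from Okayama and Shiraishi \cite{okayama21:_improv_sinc}, so the only internal point of comparison is the proof of Theorem~\ref{thm:New1} in Section~\ref{sec:proofs}, which follows exactly the architecture you describe (split into discretization and truncation, bound $\mathcal{N}_1(F,d)$ on the lines $\Im\zeta=\pm d$ via Theorem~\ref{thm:discretization-error}, bound the tails on the real axis). Your bookkeeping is consistent with that template: the closed form $\tilde{\psi}_5=L-1/L$, the real-axis estimates $|\tilde{\psi}_5(x)|\geq(1-\log 2)\E^{-x}$ for $x\leq 0$ and $\tilde{\psi}_5(x)\geq x-1/\log 2$ for $x\geq 0$, the geometric-tail computation producing $\tilde{C}_{\mathrm{T}}\sqrt{\mu/(\pi d)}\,\sqrt{n}\,\E^{-\sqrt{\pi d\mu n}}$, and the final substitutions $\pi d/h=\sqrt{\pi d\mu n}$, $1-\E^{-2\sqrt{\pi d\mu n}}\geq 1-\E^{-2\sqrt{\pi d\mu}}$, $1\leq\sqrt{n}$ are all correct and reproduce the stated constants.

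The genuine gap is that the two boundary inequalities you yourself flag as ``the delicate part'' --- $|\tilde{\psi}_5(x\pm\I d)|\geq\frac{(1-\log 2)(\E-1)\cos(d/2)}{\E}\E^{-x}$ for $x\leq 0$ and $\Re\,\tilde{\psi}_5(x\pm\I d)\geq x-1/\log 2+\log\cos(d/2)$ for $x\geq 0$ --- are stated as targets but never established, and they carry essentially all of the analytic content of the theorem (they are precisely where the constants $\E/(\E-1)$, $1-\log 2$, $\E^{1/\log 2}$ and $\cos(d/2)$ originate). The way the paper handles the analogous DE case, which transfers directly here, is to factor $\tilde{\psi}_5(\zeta)=\frac{(L-1)(L+1)}{L}$ and prove three separate lemmas: (a) $|L(\zeta)-1|\geq 1-\log 2$ on $\overline{\domD_d^{-}}$ (the SE analogue of Lemma~\ref{lem:bound-by-1-minus-log2}); (b) a ratio bound $\bigl|\frac{L}{1+L}\cdot\frac{1+\E^{\zeta}}{\E^{\zeta}}\bigr|\leq\frac{\E}{\E-1}$ on $\domD_d^{-}$ (the SE analogue of Lemma~\ref{lem:bound-log-by-exp}, whose real-axis version with constant $1$ is~\eqref{eq:bound-log-by-exp-real}); and (c) $|1+\E^{x\pm\I d}|\geq(1+\E^{x})\cos(d/2)$ together with $|L(\zeta)|\geq\log 2$ on $\overline{\domD_d^{+}}$ (the SE analogues of Lemma~\ref{lem:DE-func-bound} and~\eqref{eq:domDdplus}). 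Each of these requires a nontrivial argument (maximum-modulus reasoning for (c) on the left edge, a monotonicity analysis of $(\cosh x-\cos y)/(x^{2}+y^{2})$ for (b), as in Proposition~\ref{prop:g-decrease-increase}); without them your proof does not close. You should also justify in passing that the vertical sides of $\partial\domD_d(\epsilon)$ contribute nothing to $\mathcal{N}_1$ in the limit, which your simplified two-line formula for $\mathcal{N}_1(g,d)$ silently assumes.
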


The difference between Theorems~\ref{thm:Stenger-2}
and~\ref{thm:Stenger-4} was discussed~\cite{okayama21:_improv_sinc},
which led to the conclusion that
the parameters $d$ and $\mu$ in Theorem~\ref{thm:Stenger-4}
may be taken larger
than those in Theorem~\ref{thm:Stenger-2}.
However, the convergence rates are essentially the same,
in the sense that both rates are of
root-exponential order: $\OO(\exp(-c\sqrt{n}))$.

\subsection{New results}
\label{subsec:our-result}

To improve the convergence rate significantly,
this study proposes the DE transformation
$t=\phi_5(x)$,
and establishes the following theorem.
The proof is provided in Section~\ref{sec:proofs}.

\begin{theorem}
\label{thm:New1}
Let $L$ be a positive constant defined by
$L = \log(\E/(\E - 1))$, and
let $d_L$ be a positive constant
defined by
\begin{equation}
 d_L = \arccos\left(\sqrt{\frac{2}{1+\sqrt{1 + (2\pi/L)^2}}}\right).
\label{eq:d_L}
\end{equation}
Note that $0<d_L<\pi/2$.
Assume that $f$ is analytic in $\phi_5(\domD_d)$
with $0 < d < d_L$,
and that there exist positive constants
$K_{+}$, $K_{-}$, $\alpha$ and $\beta$
such that~\eqref{eq:f-bound-minus}
holds for all $z\in\phi_5(\domD_d^{-})$,
and~\eqref{eq:f-bound-plus}
holds for all $z\in\phi_5(\domD_d^{+})$.
Let $\mu = \min\{\alpha,\beta\}$,
let $n$ be a positive integer with $n\geq \max\{\mu\E,\alpha,\beta\}/(2d)$,
let $M$ and $N$ be defined by
\begin{equation}
 M = n - \left\lfloor
\frac{\log(\alpha/\mu)}{h}
\right\rfloor,
\quad
 N = n - \left\lfloor
\frac{\log(\beta/\mu)}{h}
\right\rfloor,
\label{eq:Def-MN-DE}
\end{equation}
and let $h$ be defined by
\begin{equation}
 h = \frac{\log(2 d n/\mu)}{n}.
\label{eq:Def-h-DE}
\end{equation}
Then, there exists a constant $C$ independent of $n$ such that
\begin{equation}
 \sup_{t\in \mathbb{R}}\left|
 f(t)
- \sum_{k=-M}^N f(\phi_5(kh))S(k,h)(\phi_5^{-1}(t))
\right|\leq C \E^{-\pi d n /\log(2 d n/\mu)}.
\label{eq:DE-approx-error}
\end{equation}
More precisely, let $r_0$ and $r_1$ be defined by
\[
 r_0 = \arsinh\left(\frac{L}{\pi\cos d}\right),\quad
 r_1 = \log\left(\frac{1 + \cos d}{\sin d}\right).
\]
Then, the inequality~\eqref{eq:DE-approx-error} holds with
\[
 C = \frac{1}{\pi d}\left[
\frac{2 C_{\mathrm{D}}}{\pi (1 - \E^{-\pi \mu \E})\cos d}
+ C_{\mathrm{T}}
\right],
\]
where ${C}_{\mathrm{D}}$ and ${C}_{\mathrm{T}}$
are constants defined by
\begin{align}
  {C}_{\mathrm{D}}
&=\frac{K_{-}}{\alpha}
\left[\frac{\E^2 +\E +1}{(1 - \log 2)(\E^2 - 1)\tilde{c}_d}\right]^{\alpha}
+\frac{K_{+}}{\beta}
\left[\frac{\E^{1/\log 2}}{\cos((\pi/2)\sin d)}\right]^{\beta},
\label{eq:C_D}\\
 {C}_{\mathrm{T}}
&=K_{-}\left(\frac{\E^{\pi/2}}{1 - \log 2}\right)^{\alpha}
+K_{+}\left(\E^{(\pi/2)+(1/\log 2)}\right)^{\beta},
\label{eq:C_T}
\end{align}
and $\tilde{c}_d$ is defined by
\[
\tilde{c}_{d} =
\begin{cases}
\sqrt{1 - \E^{L}\sin^2[(\pi/2)\sin d]} & (d \leq 10/23), \\
   \cos[(\pi/2)/\cosh(r_1 - r_0)] & (d > 10/23).
\end{cases}
\]
\end{theorem}

\begin{remark}
\label{rem:threshold-d-L}
The value $10/23$ is chosen as a convenient rational switching
point slightly larger than the numerically computed smallest value of
$d$ satisfying the condition
\[
\left\{ \cosh(r_1) + \cosh(r_0)  \right\}
\cos[(\pi/2)/\cosh(r_1 - r_0)] \leq \sqrt{6}.
\]
The detailed explanation is provided
in the proof of Lemma~\ref{lem:important-estimate} in Section~\ref{sec:proofs}.
The endpoint convention in the piecewise definition of
$\tilde{c}_d$ is explained in
Remark~\ref{rem:switching-point-endpoint}.
\end{remark}

From Theorem~\ref{thm:New1},
we conclude that the proposed method achieves
almost exponential convergence, which is substantially faster than
root-exponential convergence.  Their respective orders are
\[
\OO(\exp(-c n/\log n))
\quad\text{and}\quad
\OO(\exp(-c\sqrt{n})).
\]

Table~\ref{tbl:comparison-theorems} summarizes the three approximation
results presented in this section.
The first two results use the SE transformations and attain
root-exponential convergence, whereas the new result uses the DE
transformation and attains almost exponential convergence.
Because $M+N+1$ is asymptotically proportional to $n$, the
sample-complexity row gives the asymptotic number of sampling points
required to attain an accuracy $\varepsilon$.
In the last row, ``explicit'' means that every constant in the error
bound is given in a computable form.

\begin{table}[!ht]
\centering
\caption{Comparison of the approximation results in Section~\ref{sec:main_theorem}.}
\label{tbl:comparison-theorems}
\small
\renewcommand{\arraystretch}{1.25}
\setlength{\tabcolsep}{3pt}
\begin{tabular}{@{}p{0.20\textwidth}p{0.23\textwidth}p{0.23\textwidth}p{0.25\textwidth}@{}}
\hline
& Theorem~\ref{thm:Stenger-2}
& Theorem~\ref{thm:Stenger-4}
& Theorem~\ref{thm:New1} \\
\hline
Transformation
& $\psi_5$ (SE)
& $\tilde{\psi}_5$ (SE)
& $\phi_5$ (DE) \\
Strip parameter
& $0<d\leq\pi/2$
& $0<d<\pi$
& $0<d<d_L$ \\
Mesh size $h$
& $\sqrt{\pi d/(\mu n)}$
& $\sqrt{\pi d/(\mu n)}$
& $\log(2dn/\mu)/n$ \\
Error order
& $\OO(\sqrt{n}\E^{-\sqrt{\pi d\mu n}})$
& $\OO(\sqrt{n}\E^{-\sqrt{\pi d\mu n}})$
& $\OO(\E^{-\pi dn/\log(2dn/\mu)})$ \\
Sample complexity
& $\OO((\log(1/\varepsilon))^2)$
& $\OO((\log(1/\varepsilon))^2)$
& $\OO(\log(1/\varepsilon)\log\log(1/\varepsilon))$ \\
Explicit error constant
& No
& Yes
& Yes \\
\hline
\end{tabular}
\end{table}


\section{Numerical examples}
\label{sec:numer}

This section presents numerical results.
All programs were written in C with double-precision floating-point
arithmetic.
Let us consider the following three unilateral rapidly decreasing functions:
\begin{align}
 f_1(t) &= \sinh\left(\frac{t + \sqrt{4 + t^2}}{4}\right)
\E^{-t - \sqrt{t^2 + 4}}, \label{eq:f_1}\\
 f_2(t) &= \frac{1}{\sqrt{1 + (t/2)^2} + 1 - (t/2)}
\E^{-(t/2)-\sqrt{1 + (t/2)^2}}, \label{eq:f_2}\\
 f_3(t) &= \frac{1}{\sqrt{1+t^2}(1+\E^t)}. \label{eq:f_3}
\end{align}
The functions $f_2(t)$ and $f_3(t)$
were taken from the previous studies~\cite{okayama21:_improv_sinc,StengerODEIVP}.
The functions $f_1$ and $f_2$ satisfy the assumptions
of Theorems~\ref{thm:Stenger-2}--\ref{thm:New1} with
the parameters shown in Tables~\ref{tbl:parameters-1}
and~\ref{tbl:parameters-2}
($K_{-}$ and $K_{+}$ for Theorem~\ref{thm:Stenger-2}
are not shown because their existence is evident,
and they are not required for computation).
The function $f_3$ satisfies the assumptions of
Theorems~\ref{thm:Stenger-2} and~\ref{thm:Stenger-4} with the
parameters shown in Table~\ref{tbl:parameters-3}, whereas
Theorem~\ref{thm:New1} is not applicable because no positive value of
$d$ satisfies its analyticity assumption for this function.
For comparison, we nevertheless applied the DE-Sinc approximation to $f_3$
with $\alpha=\beta=1$ and $d=1.17$, where the value of $d$ was chosen
only to match that used for $f_1$ and $f_2$ and has no theoretical
justification.  Accordingly, only the observed error, and not the error
bound from Theorem~\ref{thm:New1}, is plotted for this computation.
The errors were investigated on the following 402 points
\[
 t = \pm 2^{i}, \quad i = -50,\,-49.5, -49,\,\ldots,\,49.5,\,50,
\]
and $t=0$ (403 points in total),
and maximum error among these points was plotted on the graph.
The results are shown in
Figs.~\ref{fig:example1}--\ref{fig:example3}.
For $f_1$ and $f_2$, we observe that
the DE-Sinc approximation (proposed formula)
from Theorem~\ref{thm:New1}
converges faster than the formulas from Theorems~\ref{thm:Stenger-2}
and~\ref{thm:Stenger-4}.
Furthermore, the error bounds from Theorems~\ref{thm:Stenger-4}
and~\ref{thm:New1} include the corresponding observed errors,
reflecting their respective theoretical convergence rates:
$\OO(\exp(-c\sqrt{n}))$ and
$\OO(\exp(-cn / \log n))$.
For $f_3$, the observed errors obtained using
Theorems~\ref{thm:Stenger-2} and~\ref{thm:Stenger-4} are nearly the
same, although the latter is slightly smaller.  The exploratory DE-Sinc
approximation does not exhibit the characteristic
$\OO(\exp(-cn/\log n))$ convergence; instead, its observed convergence
is similar to, and appears slightly faster than, those of the two
SE-Sinc approximations.  Related behavior for functions satisfying
SE-Sinc assumptions but not the standard DE-Sinc assumptions has been
analyzed in~\cite{OkayamaEtAl2013DESincSE}, which may provide a useful
starting point for studying the present case.  A rigorous error analysis is left for future work.

\begin{table}
\centering
\caption{Parameters for a function $f_1(t)$ in~\eqref{eq:f_1}.}
\label{tbl:parameters-1}
\begin{tabular}{c|lllrl}
\hline
& \multicolumn{1}{c}{$d$} & \multicolumn{1}{c}{$\alpha$} & \multicolumn{1}{c}{$\beta$} & \multicolumn{1}{c}{$K_{-}$} & \multicolumn{1}{c}{$K_{+}$} \\
\hline
Theorem~\ref{thm:Stenger-2} & 1.5 & 1        & 0.75    &        &       \\
Theorem~\ref{thm:Stenger-4} & 3   & 1        & 1.5     & 159    & 5.73  \\
Theorem~\ref{thm:New1}      & 1.17& 1        & 1.5     &  34    & 3.39  \\
\hline
\end{tabular}
\caption{Parameters for a function $f_2(t)$ in~\eqref{eq:f_2}.}
\label{tbl:parameters-2}
\begin{tabular}{c|lllll}
\hline
& \multicolumn{1}{c}{$d$} & \multicolumn{1}{c}{$\alpha$} & \multicolumn{1}{c}{$\beta$} & \multicolumn{1}{c}{$K_{-}$} & \multicolumn{1}{c}{$K_{+}$} \\
\hline
Theorem~\ref{thm:Stenger-2} & 1.5 & 1        & 0.5     &         &         \\
Theorem~\ref{thm:Stenger-4} & 3   & 1        & 1       & 23.5    & 1.93    \\
Theorem~\ref{thm:New1}      & 1.17& 1        & 1       & 11.3    & 1.9    \\
\hline
\end{tabular}
\caption{Parameters for a function $f_3(t)$ in~\eqref{eq:f_3}.}
\label{tbl:parameters-3}
\begin{tabular}{c|lllll}
\hline
& \multicolumn{1}{c}{$d$} & \multicolumn{1}{c}{$\alpha$} & \multicolumn{1}{c}{$\beta$} & \multicolumn{1}{c}{$K_{-}$} & \multicolumn{1}{c}{$K_{+}$} \\
\hline
Theorem~\ref{thm:Stenger-2} & 1.5 & 1 & 0.5 &     &     \\
Theorem~\ref{thm:Stenger-4} & 0.8 & 1 & 1   & 1.1 & 4.0 \\
Theorem~\ref{thm:New1}      & --  & 1 & 1 & --  & --  \\
\hline
\end{tabular}
\end{table}

\begin{figure}[htpb]
\centering
\scalebox{0.72}{\begin{tikzpicture}[gnuplot]
\tikzset{every node/.append style={font={\ttfamily\fontsize{12.0pt}{14.4pt}\selectfont}}}
\gpmonochromelines
\path (0.000,0.000) rectangle (12.500,8.750);
\gpcolor{color=gp lt color border}
\gpsetlinetype{gp lt border}
\gpsetdashtype{gp dt solid}
\gpsetlinewidth{1.00}
\draw[gp path] (2.027,1.184)--(2.117,1.184);
\draw[gp path] (11.836,1.184)--(11.746,1.184);
\draw[gp path] (2.027,1.523)--(2.117,1.523);
\draw[gp path] (11.836,1.523)--(11.746,1.523);
\draw[gp path] (2.027,1.862)--(2.207,1.862);
\draw[gp path] (11.836,1.862)--(11.656,1.862);
\node[gp node right] at (1.806,1.862) {$10^{-15}$};
\draw[gp path] (2.027,2.202)--(2.117,2.202);
\draw[gp path] (11.836,2.202)--(11.746,2.202);
\draw[gp path] (2.027,2.541)--(2.117,2.541);
\draw[gp path] (11.836,2.541)--(11.746,2.541);
\draw[gp path] (2.027,2.880)--(2.117,2.880);
\draw[gp path] (11.836,2.880)--(11.746,2.880);
\draw[gp path] (2.027,3.219)--(2.117,3.219);
\draw[gp path] (11.836,3.219)--(11.746,3.219);
\draw[gp path] (2.027,3.558)--(2.207,3.558);
\draw[gp path] (11.836,3.558)--(11.656,3.558);
\node[gp node right] at (1.806,3.558) {$10^{-10}$};
\draw[gp path] (2.027,3.898)--(2.117,3.898);
\draw[gp path] (11.836,3.898)--(11.746,3.898);
\draw[gp path] (2.027,4.237)--(2.117,4.237);
\draw[gp path] (11.836,4.237)--(11.746,4.237);
\draw[gp path] (2.027,4.576)--(2.117,4.576);
\draw[gp path] (11.836,4.576)--(11.746,4.576);
\draw[gp path] (2.027,4.915)--(2.117,4.915);
\draw[gp path] (11.836,4.915)--(11.746,4.915);
\draw[gp path] (2.027,5.254)--(2.207,5.254);
\draw[gp path] (11.836,5.254)--(11.656,5.254);
\node[gp node right] at (1.806,5.254) {$10^{-5}$};
\draw[gp path] (2.027,5.593)--(2.117,5.593);
\draw[gp path] (11.836,5.593)--(11.746,5.593);
\draw[gp path] (2.027,5.933)--(2.117,5.933);
\draw[gp path] (11.836,5.933)--(11.746,5.933);
\draw[gp path] (2.027,6.272)--(2.117,6.272);
\draw[gp path] (11.836,6.272)--(11.746,6.272);
\draw[gp path] (2.027,6.611)--(2.117,6.611);
\draw[gp path] (11.836,6.611)--(11.746,6.611);
\draw[gp path] (2.027,6.950)--(2.207,6.950);
\draw[gp path] (11.836,6.950)--(11.656,6.950);
\node[gp node right] at (1.806,6.950) {$10^{0}$};
\draw[gp path] (2.027,7.289)--(2.117,7.289);
\draw[gp path] (11.836,7.289)--(11.746,7.289);
\draw[gp path] (2.027,7.629)--(2.117,7.629);
\draw[gp path] (11.836,7.629)--(11.746,7.629);
\draw[gp path] (2.027,7.968)--(2.117,7.968);
\draw[gp path] (11.836,7.968)--(11.746,7.968);
\draw[gp path] (2.027,8.307)--(2.117,8.307);
\draw[gp path] (11.836,8.307)--(11.746,8.307);
\draw[gp path] (2.027,1.184)--(2.027,1.364);
\draw[gp path] (2.027,8.307)--(2.027,8.127);
\node[gp node center] at (2.027,0.814) {$0$};
\draw[gp path] (3.428,1.184)--(3.428,1.364);
\draw[gp path] (3.428,8.307)--(3.428,8.127);
\node[gp node center] at (3.428,0.814) {$20$};
\draw[gp path] (4.830,1.184)--(4.830,1.364);
\draw[gp path] (4.830,8.307)--(4.830,8.127);
\node[gp node center] at (4.830,0.814) {$40$};
\draw[gp path] (6.231,1.184)--(6.231,1.364);
\draw[gp path] (6.231,8.307)--(6.231,8.127);
\node[gp node center] at (6.231,0.814) {$60$};
\draw[gp path] (7.632,1.184)--(7.632,1.364);
\draw[gp path] (7.632,8.307)--(7.632,8.127);
\node[gp node center] at (7.632,0.814) {$80$};
\draw[gp path] (9.033,1.184)--(9.033,1.364);
\draw[gp path] (9.033,8.307)--(9.033,8.127);
\node[gp node center] at (9.033,0.814) {$100$};
\draw[gp path] (10.435,1.184)--(10.435,1.364);
\draw[gp path] (10.435,8.307)--(10.435,8.127);
\node[gp node center] at (10.435,0.814) {$120$};
\draw[gp path] (11.836,1.184)--(11.836,1.364);
\draw[gp path] (11.836,8.307)--(11.836,8.127);
\node[gp node center] at (11.836,0.814) {$140$};
\draw[gp path] (2.027,8.307)--(2.027,1.184)--(11.836,1.184)--(11.836,8.307)--cycle;
\node[gp node center,rotate=-270] at (0.295,4.745) {maximum error};
\node[gp node center] at (6.931,0.259) {$n$};
\node[gp node right] at (10.109,7.902) {Observed error (Theorem 2.1)};
\draw[gp path] (10.330,7.902)--(11.394,7.902);
\draw[gp path] (2.167,6.195)--(2.517,5.972)--(2.868,5.886)--(3.218,5.701)--(3.568,5.540)%
  --(3.919,5.395)--(4.269,5.320)--(4.619,5.192)--(4.970,5.078)--(5.320,4.955)--(5.670,4.890)%
  --(6.021,4.781)--(6.371,4.690)--(6.721,4.609)--(7.072,4.565)--(7.422,4.474)--(7.772,4.357)%
  --(8.123,4.302)--(8.473,4.267)--(8.823,4.149)--(9.174,4.109)--(9.524,3.992)--(9.874,3.970)%
  --(10.225,3.903)--(10.575,3.851)--(10.925,3.724)--(11.275,3.742)--(11.626,3.621)--(11.836,3.606);
\gpsetpointsize{4.00}
\gppoint{gp mark 2}{(2.167,6.195)}
\gppoint{gp mark 2}{(2.517,5.972)}
\gppoint{gp mark 2}{(2.868,5.886)}
\gppoint{gp mark 2}{(3.218,5.701)}
\gppoint{gp mark 2}{(3.568,5.540)}
\gppoint{gp mark 2}{(3.919,5.395)}
\gppoint{gp mark 2}{(4.269,5.320)}
\gppoint{gp mark 2}{(4.619,5.192)}
\gppoint{gp mark 2}{(4.970,5.078)}
\gppoint{gp mark 2}{(5.320,4.955)}
\gppoint{gp mark 2}{(5.670,4.890)}
\gppoint{gp mark 2}{(6.021,4.781)}
\gppoint{gp mark 2}{(6.371,4.690)}
\gppoint{gp mark 2}{(6.721,4.609)}
\gppoint{gp mark 2}{(7.072,4.565)}
\gppoint{gp mark 2}{(7.422,4.474)}
\gppoint{gp mark 2}{(7.772,4.357)}
\gppoint{gp mark 2}{(8.123,4.302)}
\gppoint{gp mark 2}{(8.473,4.267)}
\gppoint{gp mark 2}{(8.823,4.149)}
\gppoint{gp mark 2}{(9.174,4.109)}
\gppoint{gp mark 2}{(9.524,3.992)}
\gppoint{gp mark 2}{(9.874,3.970)}
\gppoint{gp mark 2}{(10.225,3.903)}
\gppoint{gp mark 2}{(10.575,3.851)}
\gppoint{gp mark 2}{(10.925,3.724)}
\gppoint{gp mark 2}{(11.275,3.742)}
\gppoint{gp mark 2}{(11.626,3.621)}
\gppoint{gp mark 2}{(10.862,7.902)}
\node[gp node right] at (10.109,7.452) {Observed error (Theorem 2.2)};
\draw[gp path] (10.330,7.452)--(11.394,7.452);
\draw[gp path] (2.167,6.266)--(2.517,5.800)--(2.868,5.467)--(3.218,5.194)--(3.568,4.951)%
  --(3.919,4.726)--(4.269,4.532)--(4.619,4.347)--(4.970,4.166)--(5.320,4.008)--(5.670,3.844)%
  --(6.021,3.698)--(6.371,3.553)--(6.721,3.414)--(7.072,3.274)--(7.422,3.143)--(7.772,3.022)%
  --(8.123,2.902)--(8.473,2.784)--(8.823,2.669)--(9.174,2.549)--(9.524,2.441)--(9.874,2.330)%
  --(10.225,2.219)--(10.575,2.123)--(10.925,2.024)--(11.275,1.921)--(11.626,1.820)--(11.836,1.765);
\gppoint{gp mark 9}{(2.167,6.266)}
\gppoint{gp mark 9}{(2.517,5.800)}
\gppoint{gp mark 9}{(2.868,5.467)}
\gppoint{gp mark 9}{(3.218,5.194)}
\gppoint{gp mark 9}{(3.568,4.951)}
\gppoint{gp mark 9}{(3.919,4.726)}
\gppoint{gp mark 9}{(4.269,4.532)}
\gppoint{gp mark 9}{(4.619,4.347)}
\gppoint{gp mark 9}{(4.970,4.166)}
\gppoint{gp mark 9}{(5.320,4.008)}
\gppoint{gp mark 9}{(5.670,3.844)}
\gppoint{gp mark 9}{(6.021,3.698)}
\gppoint{gp mark 9}{(6.371,3.553)}
\gppoint{gp mark 9}{(6.721,3.414)}
\gppoint{gp mark 9}{(7.072,3.274)}
\gppoint{gp mark 9}{(7.422,3.143)}
\gppoint{gp mark 9}{(7.772,3.022)}
\gppoint{gp mark 9}{(8.123,2.902)}
\gppoint{gp mark 9}{(8.473,2.784)}
\gppoint{gp mark 9}{(8.823,2.669)}
\gppoint{gp mark 9}{(9.174,2.549)}
\gppoint{gp mark 9}{(9.524,2.441)}
\gppoint{gp mark 9}{(9.874,2.330)}
\gppoint{gp mark 9}{(10.225,2.219)}
\gppoint{gp mark 9}{(10.575,2.123)}
\gppoint{gp mark 9}{(10.925,2.024)}
\gppoint{gp mark 9}{(11.275,1.921)}
\gppoint{gp mark 9}{(11.626,1.820)}
\gppoint{gp mark 9}{(10.862,7.452)}
\node[gp node right] at (10.109,7.002) {Error bound (Theorem 2.2)};
\gpsetdashtype{gp dt 3}
\draw[gp path] (10.330,7.002)--(11.394,7.002);
\draw[gp path] (2.167,7.559)--(2.517,7.094)--(2.868,6.764)--(3.218,6.491)--(3.568,6.254)%
  --(3.919,6.040)--(4.269,5.844)--(4.619,5.662)--(4.970,5.492)--(5.320,5.330)--(5.670,5.177)%
  --(6.021,5.031)--(6.371,4.890)--(6.721,4.755)--(7.072,4.625)--(7.422,4.499)--(7.772,4.377)%
  --(8.123,4.258)--(8.473,4.143)--(8.823,4.030)--(9.174,3.920)--(9.524,3.813)--(9.874,3.709)%
  --(10.225,3.606)--(10.575,3.506)--(10.925,3.408)--(11.275,3.311)--(11.626,3.216)--(11.836,3.160);
\node[gp node right] at (10.109,6.552) {Observed error (Theorem 2.3)};
\gpsetdashtype{gp dt solid}
\draw[gp path] (10.330,6.552)--(11.394,6.552);
\draw[gp path] (2.167,6.319)--(2.517,5.764)--(2.868,5.175)--(3.218,4.576)--(3.568,3.973)%
  --(3.919,3.392)--(4.269,2.805)--(4.619,2.234)--(4.970,1.666)--(5.320,1.394)--(5.670,1.394)%
  --(6.021,1.436);
\gppoint{gp mark 7}{(2.167,6.319)}
\gppoint{gp mark 7}{(2.517,5.764)}
\gppoint{gp mark 7}{(2.868,5.175)}
\gppoint{gp mark 7}{(3.218,4.576)}
\gppoint{gp mark 7}{(3.568,3.973)}
\gppoint{gp mark 7}{(3.919,3.392)}
\gppoint{gp mark 7}{(4.269,2.805)}
\gppoint{gp mark 7}{(4.619,2.234)}
\gppoint{gp mark 7}{(4.970,1.666)}
\gppoint{gp mark 7}{(5.320,1.394)}
\gppoint{gp mark 7}{(5.670,1.394)}
\gppoint{gp mark 7}{(6.021,1.436)}
\gppoint{gp mark 7}{(10.862,6.552)}
\node[gp node right] at (10.109,6.102) {Error bound (Theorem 2.3)};
\gpsetdashtype{gp dt 5}
\gpsetlinewidth{2.00}
\draw[gp path] (10.330,6.102)--(11.394,6.102);
\draw[gp path] (2.167,7.848)--(2.517,7.194)--(2.868,6.602)--(3.218,6.051)--(3.568,5.527)%
  --(3.919,5.024)--(4.269,4.535)--(4.619,4.059)--(4.970,3.593)--(5.320,3.136)--(5.670,2.686)%
  --(6.021,2.242);
\gpsetdashtype{gp dt solid}
\gpsetlinewidth{1.00}
\draw[gp path] (2.027,8.307)--(2.027,1.184)--(11.836,1.184)--(11.836,8.307)--cycle;
\gpdefrectangularnode{gp plot 1}{\pgfpoint{2.027cm}{1.184cm}}{\pgfpoint{11.836cm}{8.307cm}}
\end{tikzpicture}
\caption{Observed errors and error bounds for $f_1(t)$ in~\eqref{eq:f_1}.}\label{fig:example1}
\scalebox{0.72}{\begin{tikzpicture}[gnuplot]
\tikzset{every node/.append style={font={\ttfamily\fontsize{12.0pt}{14.4pt}\selectfont}}}
\gpmonochromelines
\path (0.000,0.000) rectangle (12.500,8.750);
\gpcolor{color=gp lt color border}
\gpsetlinetype{gp lt border}
\gpsetdashtype{gp dt solid}
\gpsetlinewidth{1.00}
\draw[gp path] (2.027,1.184)--(2.117,1.184);
\draw[gp path] (11.836,1.184)--(11.746,1.184);
\draw[gp path] (2.027,1.523)--(2.117,1.523);
\draw[gp path] (11.836,1.523)--(11.746,1.523);
\draw[gp path] (2.027,1.862)--(2.207,1.862);
\draw[gp path] (11.836,1.862)--(11.656,1.862);
\node[gp node right] at (1.806,1.862) {$10^{-15}$};
\draw[gp path] (2.027,2.202)--(2.117,2.202);
\draw[gp path] (11.836,2.202)--(11.746,2.202);
\draw[gp path] (2.027,2.541)--(2.117,2.541);
\draw[gp path] (11.836,2.541)--(11.746,2.541);
\draw[gp path] (2.027,2.880)--(2.117,2.880);
\draw[gp path] (11.836,2.880)--(11.746,2.880);
\draw[gp path] (2.027,3.219)--(2.117,3.219);
\draw[gp path] (11.836,3.219)--(11.746,3.219);
\draw[gp path] (2.027,3.558)--(2.207,3.558);
\draw[gp path] (11.836,3.558)--(11.656,3.558);
\node[gp node right] at (1.806,3.558) {$10^{-10}$};
\draw[gp path] (2.027,3.898)--(2.117,3.898);
\draw[gp path] (11.836,3.898)--(11.746,3.898);
\draw[gp path] (2.027,4.237)--(2.117,4.237);
\draw[gp path] (11.836,4.237)--(11.746,4.237);
\draw[gp path] (2.027,4.576)--(2.117,4.576);
\draw[gp path] (11.836,4.576)--(11.746,4.576);
\draw[gp path] (2.027,4.915)--(2.117,4.915);
\draw[gp path] (11.836,4.915)--(11.746,4.915);
\draw[gp path] (2.027,5.254)--(2.207,5.254);
\draw[gp path] (11.836,5.254)--(11.656,5.254);
\node[gp node right] at (1.806,5.254) {$10^{-5}$};
\draw[gp path] (2.027,5.593)--(2.117,5.593);
\draw[gp path] (11.836,5.593)--(11.746,5.593);
\draw[gp path] (2.027,5.933)--(2.117,5.933);
\draw[gp path] (11.836,5.933)--(11.746,5.933);
\draw[gp path] (2.027,6.272)--(2.117,6.272);
\draw[gp path] (11.836,6.272)--(11.746,6.272);
\draw[gp path] (2.027,6.611)--(2.117,6.611);
\draw[gp path] (11.836,6.611)--(11.746,6.611);
\draw[gp path] (2.027,6.950)--(2.207,6.950);
\draw[gp path] (11.836,6.950)--(11.656,6.950);
\node[gp node right] at (1.806,6.950) {$10^{0}$};
\draw[gp path] (2.027,7.289)--(2.117,7.289);
\draw[gp path] (11.836,7.289)--(11.746,7.289);
\draw[gp path] (2.027,7.629)--(2.117,7.629);
\draw[gp path] (11.836,7.629)--(11.746,7.629);
\draw[gp path] (2.027,7.968)--(2.117,7.968);
\draw[gp path] (11.836,7.968)--(11.746,7.968);
\draw[gp path] (2.027,8.307)--(2.117,8.307);
\draw[gp path] (11.836,8.307)--(11.746,8.307);
\draw[gp path] (2.027,1.184)--(2.027,1.364);
\draw[gp path] (2.027,8.307)--(2.027,8.127);
\node[gp node center] at (2.027,0.814) {$0$};
\draw[gp path] (3.428,1.184)--(3.428,1.364);
\draw[gp path] (3.428,8.307)--(3.428,8.127);
\node[gp node center] at (3.428,0.814) {$20$};
\draw[gp path] (4.830,1.184)--(4.830,1.364);
\draw[gp path] (4.830,8.307)--(4.830,8.127);
\node[gp node center] at (4.830,0.814) {$40$};
\draw[gp path] (6.231,1.184)--(6.231,1.364);
\draw[gp path] (6.231,8.307)--(6.231,8.127);
\node[gp node center] at (6.231,0.814) {$60$};
\draw[gp path] (7.632,1.184)--(7.632,1.364);
\draw[gp path] (7.632,8.307)--(7.632,8.127);
\node[gp node center] at (7.632,0.814) {$80$};
\draw[gp path] (9.033,1.184)--(9.033,1.364);
\draw[gp path] (9.033,8.307)--(9.033,8.127);
\node[gp node center] at (9.033,0.814) {$100$};
\draw[gp path] (10.435,1.184)--(10.435,1.364);
\draw[gp path] (10.435,8.307)--(10.435,8.127);
\node[gp node center] at (10.435,0.814) {$120$};
\draw[gp path] (11.836,1.184)--(11.836,1.364);
\draw[gp path] (11.836,8.307)--(11.836,8.127);
\node[gp node center] at (11.836,0.814) {$140$};
\draw[gp path] (2.027,8.307)--(2.027,1.184)--(11.836,1.184)--(11.836,8.307)--cycle;
\node[gp node center,rotate=-270] at (0.295,4.745) {maximum error};
\node[gp node center] at (6.931,0.259) {$n$};
\node[gp node right] at (10.109,7.902) {Observed error (Theorem 2.1)};
\draw[gp path] (10.330,7.902)--(11.394,7.902);
\draw[gp path] (2.167,6.417)--(2.517,6.196)--(2.868,6.139)--(3.218,5.957)--(3.568,5.900)%
  --(3.919,5.752)--(4.269,5.699)--(4.619,5.558)--(4.970,5.513)--(5.320,5.396)--(5.670,5.346)%
  --(6.021,5.238)--(6.371,5.215)--(6.721,5.125)--(7.072,5.088)--(7.422,4.983)--(7.772,4.947)%
  --(8.123,4.880)--(8.473,4.829)--(8.823,4.759)--(9.174,4.730)--(9.524,4.636)--(9.874,4.627)%
  --(10.225,4.525)--(10.575,4.524)--(10.925,4.432)--(11.275,4.423)--(11.626,4.350)--(11.836,4.330);
\gpsetpointsize{4.00}
\gppoint{gp mark 2}{(2.167,6.417)}
\gppoint{gp mark 2}{(2.517,6.196)}
\gppoint{gp mark 2}{(2.868,6.139)}
\gppoint{gp mark 2}{(3.218,5.957)}
\gppoint{gp mark 2}{(3.568,5.900)}
\gppoint{gp mark 2}{(3.919,5.752)}
\gppoint{gp mark 2}{(4.269,5.699)}
\gppoint{gp mark 2}{(4.619,5.558)}
\gppoint{gp mark 2}{(4.970,5.513)}
\gppoint{gp mark 2}{(5.320,5.396)}
\gppoint{gp mark 2}{(5.670,5.346)}
\gppoint{gp mark 2}{(6.021,5.238)}
\gppoint{gp mark 2}{(6.371,5.215)}
\gppoint{gp mark 2}{(6.721,5.125)}
\gppoint{gp mark 2}{(7.072,5.088)}
\gppoint{gp mark 2}{(7.422,4.983)}
\gppoint{gp mark 2}{(7.772,4.947)}
\gppoint{gp mark 2}{(8.123,4.880)}
\gppoint{gp mark 2}{(8.473,4.829)}
\gppoint{gp mark 2}{(8.823,4.759)}
\gppoint{gp mark 2}{(9.174,4.730)}
\gppoint{gp mark 2}{(9.524,4.636)}
\gppoint{gp mark 2}{(9.874,4.627)}
\gppoint{gp mark 2}{(10.225,4.525)}
\gppoint{gp mark 2}{(10.575,4.524)}
\gppoint{gp mark 2}{(10.925,4.432)}
\gppoint{gp mark 2}{(11.275,4.423)}
\gppoint{gp mark 2}{(11.626,4.350)}
\gppoint{gp mark 2}{(10.862,7.902)}
\node[gp node right] at (10.109,7.452) {Observed error (Theorem 2.2)};
\draw[gp path] (10.330,7.452)--(11.394,7.452);
\draw[gp path] (2.167,6.276)--(2.517,5.794)--(2.868,5.442)--(3.218,5.157)--(3.568,4.903)%
  --(3.919,4.673)--(4.269,4.462)--(4.619,4.261)--(4.970,4.069)--(5.320,3.893)--(5.670,3.720)%
  --(6.021,3.552)--(6.371,3.399)--(6.721,3.242)--(7.072,3.084)--(7.422,2.948)--(7.772,2.790)%
  --(8.123,2.699)--(8.473,2.569)--(8.823,2.427)--(9.174,2.314)--(9.524,2.234)--(9.874,2.118)%
  --(10.225,2.030)--(10.575,1.916)--(10.925,1.598)--(11.275,1.539)--(11.626,1.539)--(11.836,1.513);
\gppoint{gp mark 9}{(2.167,6.276)}
\gppoint{gp mark 9}{(2.517,5.794)}
\gppoint{gp mark 9}{(2.868,5.442)}
\gppoint{gp mark 9}{(3.218,5.157)}
\gppoint{gp mark 9}{(3.568,4.903)}
\gppoint{gp mark 9}{(3.919,4.673)}
\gppoint{gp mark 9}{(4.269,4.462)}
\gppoint{gp mark 9}{(4.619,4.261)}
\gppoint{gp mark 9}{(4.970,4.069)}
\gppoint{gp mark 9}{(5.320,3.893)}
\gppoint{gp mark 9}{(5.670,3.720)}
\gppoint{gp mark 9}{(6.021,3.552)}
\gppoint{gp mark 9}{(6.371,3.399)}
\gppoint{gp mark 9}{(6.721,3.242)}
\gppoint{gp mark 9}{(7.072,3.084)}
\gppoint{gp mark 9}{(7.422,2.948)}
\gppoint{gp mark 9}{(7.772,2.790)}
\gppoint{gp mark 9}{(8.123,2.699)}
\gppoint{gp mark 9}{(8.473,2.569)}
\gppoint{gp mark 9}{(8.823,2.427)}
\gppoint{gp mark 9}{(9.174,2.314)}
\gppoint{gp mark 9}{(9.524,2.234)}
\gppoint{gp mark 9}{(9.874,2.118)}
\gppoint{gp mark 9}{(10.225,2.030)}
\gppoint{gp mark 9}{(10.575,1.916)}
\gppoint{gp mark 9}{(10.925,1.598)}
\gppoint{gp mark 9}{(11.275,1.539)}
\gppoint{gp mark 9}{(11.626,1.539)}
\gppoint{gp mark 9}{(10.862,7.452)}
\node[gp node right] at (10.109,7.002) {Error bound (Theorem 2.2)};
\gpsetdashtype{gp dt 3}
\draw[gp path] (10.330,7.002)--(11.394,7.002);
\draw[gp path] (2.167,7.269)--(2.517,6.804)--(2.868,6.473)--(3.218,6.201)--(3.568,5.964)%
  --(3.919,5.750)--(4.269,5.554)--(4.619,5.372)--(4.970,5.202)--(5.320,5.040)--(5.670,4.887)%
  --(6.021,4.741)--(6.371,4.600)--(6.721,4.465)--(7.072,4.335)--(7.422,4.209)--(7.772,4.086)%
  --(8.123,3.968)--(8.473,3.852)--(8.823,3.740)--(9.174,3.630)--(9.524,3.523)--(9.874,3.419)%
  --(10.225,3.316)--(10.575,3.216)--(10.925,3.117)--(11.275,3.021)--(11.626,2.926)--(11.836,2.870);
\node[gp node right] at (10.109,6.552) {Observed error (Theorem 2.3)};
\gpsetdashtype{gp dt solid}
\draw[gp path] (10.330,6.552)--(11.394,6.552);
\draw[gp path] (2.167,6.324)--(2.517,5.752)--(2.868,5.134)--(3.218,4.475)--(3.568,4.020)%
  --(3.919,3.487)--(4.269,2.966)--(4.619,2.534)--(4.970,2.020)--(5.320,1.598)--(5.670,1.539)%
  --(6.021,1.496);
\gppoint{gp mark 7}{(2.167,6.324)}
\gppoint{gp mark 7}{(2.517,5.752)}
\gppoint{gp mark 7}{(2.868,5.134)}
\gppoint{gp mark 7}{(3.218,4.475)}
\gppoint{gp mark 7}{(3.568,4.020)}
\gppoint{gp mark 7}{(3.919,3.487)}
\gppoint{gp mark 7}{(4.269,2.966)}
\gppoint{gp mark 7}{(4.619,2.534)}
\gppoint{gp mark 7}{(4.970,2.020)}
\gppoint{gp mark 7}{(5.320,1.598)}
\gppoint{gp mark 7}{(5.670,1.539)}
\gppoint{gp mark 7}{(6.021,1.496)}
\gppoint{gp mark 7}{(10.862,6.552)}
\node[gp node right] at (10.109,6.102) {Error bound (Theorem 2.3)};
\gpsetdashtype{gp dt 5}
\gpsetlinewidth{2.00}
\draw[gp path] (10.330,6.102)--(11.394,6.102);
\draw[gp path] (2.167,7.685)--(2.517,7.031)--(2.868,6.439)--(3.218,5.888)--(3.568,5.365)%
  --(3.919,4.861)--(4.269,4.372)--(4.619,3.896)--(4.970,3.430)--(5.320,2.973)--(5.670,2.523)%
  --(6.021,2.080);
\gpsetdashtype{gp dt solid}
\gpsetlinewidth{1.00}
\draw[gp path] (2.027,8.307)--(2.027,1.184)--(11.836,1.184)--(11.836,8.307)--cycle;
\gpdefrectangularnode{gp plot 1}{\pgfpoint{2.027cm}{1.184cm}}{\pgfpoint{11.836cm}{8.307cm}}
\end{tikzpicture}
\caption{Observed errors and error bounds for $f_2(t)$ in~\eqref{eq:f_2}.}\label{fig:example2}
\scalebox{0.72}{\begin{tikzpicture}[gnuplot]
\tikzset{every node/.append style={font={\ttfamily\fontsize{12.0pt}{14.4pt}\selectfont}}}
\gpmonochromelines
\path (0.000,0.000) rectangle (12.500,8.750);
\gpcolor{color=gp lt color border}
\gpsetlinetype{gp lt border}
\gpsetdashtype{gp dt solid}
\gpsetlinewidth{1.00}
\draw[gp path] (2.027,1.184)--(2.117,1.184);
\draw[gp path] (11.836,1.184)--(11.746,1.184);
\draw[gp path] (2.027,1.523)--(2.117,1.523);
\draw[gp path] (11.836,1.523)--(11.746,1.523);
\draw[gp path] (2.027,1.862)--(2.207,1.862);
\draw[gp path] (11.836,1.862)--(11.656,1.862);
\node[gp node right] at (1.806,1.862) {$10^{-15}$};
\draw[gp path] (2.027,2.202)--(2.117,2.202);
\draw[gp path] (11.836,2.202)--(11.746,2.202);
\draw[gp path] (2.027,2.541)--(2.117,2.541);
\draw[gp path] (11.836,2.541)--(11.746,2.541);
\draw[gp path] (2.027,2.880)--(2.117,2.880);
\draw[gp path] (11.836,2.880)--(11.746,2.880);
\draw[gp path] (2.027,3.219)--(2.117,3.219);
\draw[gp path] (11.836,3.219)--(11.746,3.219);
\draw[gp path] (2.027,3.558)--(2.207,3.558);
\draw[gp path] (11.836,3.558)--(11.656,3.558);
\node[gp node right] at (1.806,3.558) {$10^{-10}$};
\draw[gp path] (2.027,3.898)--(2.117,3.898);
\draw[gp path] (11.836,3.898)--(11.746,3.898);
\draw[gp path] (2.027,4.237)--(2.117,4.237);
\draw[gp path] (11.836,4.237)--(11.746,4.237);
\draw[gp path] (2.027,4.576)--(2.117,4.576);
\draw[gp path] (11.836,4.576)--(11.746,4.576);
\draw[gp path] (2.027,4.915)--(2.117,4.915);
\draw[gp path] (11.836,4.915)--(11.746,4.915);
\draw[gp path] (2.027,5.254)--(2.207,5.254);
\draw[gp path] (11.836,5.254)--(11.656,5.254);
\node[gp node right] at (1.806,5.254) {$10^{-5}$};
\draw[gp path] (2.027,5.593)--(2.117,5.593);
\draw[gp path] (11.836,5.593)--(11.746,5.593);
\draw[gp path] (2.027,5.933)--(2.117,5.933);
\draw[gp path] (11.836,5.933)--(11.746,5.933);
\draw[gp path] (2.027,6.272)--(2.117,6.272);
\draw[gp path] (11.836,6.272)--(11.746,6.272);
\draw[gp path] (2.027,6.611)--(2.117,6.611);
\draw[gp path] (11.836,6.611)--(11.746,6.611);
\draw[gp path] (2.027,6.950)--(2.207,6.950);
\draw[gp path] (11.836,6.950)--(11.656,6.950);
\node[gp node right] at (1.806,6.950) {$10^{0}$};
\draw[gp path] (2.027,7.289)--(2.117,7.289);
\draw[gp path] (11.836,7.289)--(11.746,7.289);
\draw[gp path] (2.027,7.629)--(2.117,7.629);
\draw[gp path] (11.836,7.629)--(11.746,7.629);
\draw[gp path] (2.027,7.968)--(2.117,7.968);
\draw[gp path] (11.836,7.968)--(11.746,7.968);
\draw[gp path] (2.027,8.307)--(2.117,8.307);
\draw[gp path] (11.836,8.307)--(11.746,8.307);
\draw[gp path] (2.027,1.184)--(2.027,1.364);
\draw[gp path] (2.027,8.307)--(2.027,8.127);
\node[gp node center] at (2.027,0.814) {$0$};
\draw[gp path] (3.428,1.184)--(3.428,1.364);
\draw[gp path] (3.428,8.307)--(3.428,8.127);
\node[gp node center] at (3.428,0.814) {$20$};
\draw[gp path] (4.830,1.184)--(4.830,1.364);
\draw[gp path] (4.830,8.307)--(4.830,8.127);
\node[gp node center] at (4.830,0.814) {$40$};
\draw[gp path] (6.231,1.184)--(6.231,1.364);
\draw[gp path] (6.231,8.307)--(6.231,8.127);
\node[gp node center] at (6.231,0.814) {$60$};
\draw[gp path] (7.632,1.184)--(7.632,1.364);
\draw[gp path] (7.632,8.307)--(7.632,8.127);
\node[gp node center] at (7.632,0.814) {$80$};
\draw[gp path] (9.033,1.184)--(9.033,1.364);
\draw[gp path] (9.033,8.307)--(9.033,8.127);
\node[gp node center] at (9.033,0.814) {$100$};
\draw[gp path] (10.435,1.184)--(10.435,1.364);
\draw[gp path] (10.435,8.307)--(10.435,8.127);
\node[gp node center] at (10.435,0.814) {$120$};
\draw[gp path] (11.836,1.184)--(11.836,1.364);
\draw[gp path] (11.836,8.307)--(11.836,8.127);
\node[gp node center] at (11.836,0.814) {$140$};
\draw[gp path] (2.027,8.307)--(2.027,1.184)--(11.836,1.184)--(11.836,8.307)--cycle;
\node[gp node center,rotate=-270] at (0.295,4.745) {maximum error};
\node[gp node center] at (6.931,0.259) {$n$};
\node[gp node right] at (10.109,7.902) {Observed error (Theorem 2.1)};
\draw[gp path] (10.330,7.902)--(11.394,7.902);
\draw[gp path] (2.167,6.566)--(2.517,6.254)--(2.868,6.142)--(3.218,5.957)--(3.568,5.898)%
  --(3.919,5.750)--(4.269,5.696)--(4.619,5.557)--(4.970,5.512)--(5.320,5.396)--(5.670,5.346)%
  --(6.021,5.238)--(6.371,5.215)--(6.721,5.124)--(7.072,5.085)--(7.422,4.988)--(7.772,4.945)%
  --(8.123,4.876)--(8.473,4.824)--(8.823,4.759)--(9.174,4.733)--(9.524,4.637)--(9.874,4.625)%
  --(10.225,4.524)--(10.575,4.526)--(10.925,4.433)--(11.275,4.421)--(11.626,4.351)--(11.836,4.332);
\gpsetpointsize{4.00}
\gppoint{gp mark 2}{(2.167,6.566)}
\gppoint{gp mark 2}{(2.517,6.254)}
\gppoint{gp mark 2}{(2.868,6.142)}
\gppoint{gp mark 2}{(3.218,5.957)}
\gppoint{gp mark 2}{(3.568,5.898)}
\gppoint{gp mark 2}{(3.919,5.750)}
\gppoint{gp mark 2}{(4.269,5.696)}
\gppoint{gp mark 2}{(4.619,5.557)}
\gppoint{gp mark 2}{(4.970,5.512)}
\gppoint{gp mark 2}{(5.320,5.396)}
\gppoint{gp mark 2}{(5.670,5.346)}
\gppoint{gp mark 2}{(6.021,5.238)}
\gppoint{gp mark 2}{(6.371,5.215)}
\gppoint{gp mark 2}{(6.721,5.124)}
\gppoint{gp mark 2}{(7.072,5.085)}
\gppoint{gp mark 2}{(7.422,4.988)}
\gppoint{gp mark 2}{(7.772,4.945)}
\gppoint{gp mark 2}{(8.123,4.876)}
\gppoint{gp mark 2}{(8.473,4.824)}
\gppoint{gp mark 2}{(8.823,4.759)}
\gppoint{gp mark 2}{(9.174,4.733)}
\gppoint{gp mark 2}{(9.524,4.637)}
\gppoint{gp mark 2}{(9.874,4.625)}
\gppoint{gp mark 2}{(10.225,4.524)}
\gppoint{gp mark 2}{(10.575,4.526)}
\gppoint{gp mark 2}{(10.925,4.433)}
\gppoint{gp mark 2}{(11.275,4.421)}
\gppoint{gp mark 2}{(11.626,4.351)}
\gppoint{gp mark 2}{(10.862,7.902)}
\node[gp node right] at (10.109,7.452) {Observed error (Theorem 2.2)};
\draw[gp path] (10.330,7.452)--(11.394,7.452);
\draw[gp path] (2.167,6.480)--(2.517,6.249)--(2.868,6.065)--(3.218,5.933)--(3.568,5.796)%
  --(3.919,5.702)--(4.269,5.597)--(4.619,5.492)--(4.970,5.374)--(5.320,5.312)--(5.670,5.239)%
  --(6.021,5.128)--(6.371,5.081)--(6.721,5.010)--(7.072,4.896)--(7.422,4.875)--(7.772,4.820)%
  --(8.123,4.699)--(8.473,4.638)--(8.823,4.599)--(9.174,4.563)--(9.524,4.522)--(9.874,4.470)%
  --(10.225,4.417)--(10.575,4.359)--(10.925,4.306)--(11.275,4.255)--(11.626,4.209)--(11.836,4.180);
\gppoint{gp mark 9}{(2.167,6.480)}
\gppoint{gp mark 9}{(2.517,6.249)}
\gppoint{gp mark 9}{(2.868,6.065)}
\gppoint{gp mark 9}{(3.218,5.933)}
\gppoint{gp mark 9}{(3.568,5.796)}
\gppoint{gp mark 9}{(3.919,5.702)}
\gppoint{gp mark 9}{(4.269,5.597)}
\gppoint{gp mark 9}{(4.619,5.492)}
\gppoint{gp mark 9}{(4.970,5.374)}
\gppoint{gp mark 9}{(5.320,5.312)}
\gppoint{gp mark 9}{(5.670,5.239)}
\gppoint{gp mark 9}{(6.021,5.128)}
\gppoint{gp mark 9}{(6.371,5.081)}
\gppoint{gp mark 9}{(6.721,5.010)}
\gppoint{gp mark 9}{(7.072,4.896)}
\gppoint{gp mark 9}{(7.422,4.875)}
\gppoint{gp mark 9}{(7.772,4.820)}
\gppoint{gp mark 9}{(8.123,4.699)}
\gppoint{gp mark 9}{(8.473,4.638)}
\gppoint{gp mark 9}{(8.823,4.599)}
\gppoint{gp mark 9}{(9.174,4.563)}
\gppoint{gp mark 9}{(9.524,4.522)}
\gppoint{gp mark 9}{(9.874,4.470)}
\gppoint{gp mark 9}{(10.225,4.417)}
\gppoint{gp mark 9}{(10.575,4.359)}
\gppoint{gp mark 9}{(10.925,4.306)}
\gppoint{gp mark 9}{(11.275,4.255)}
\gppoint{gp mark 9}{(11.626,4.209)}
\gppoint{gp mark 9}{(10.862,7.452)}
\node[gp node right] at (10.109,7.002) {Error bound (Theorem 2.2)};
\gpsetdashtype{gp dt 3}
\draw[gp path] (10.330,7.002)--(11.394,7.002);
\draw[gp path] (2.167,7.176)--(2.517,6.980)--(2.868,6.829)--(3.218,6.701)--(3.568,6.587)%
  --(3.919,6.484)--(4.269,6.389)--(4.619,6.300)--(4.970,6.217)--(5.320,6.137)--(5.670,6.062)%
  --(6.021,5.989)--(6.371,5.920)--(6.721,5.853)--(7.072,5.788)--(7.422,5.726)--(7.772,5.665)%
  --(8.123,5.606)--(8.473,5.548)--(8.823,5.492)--(9.174,5.437)--(9.524,5.383)--(9.874,5.331)%
  --(10.225,5.280)--(10.575,5.229)--(10.925,5.180)--(11.275,5.131)--(11.626,5.084)--(11.836,5.056);
\node[gp node right] at (10.109,6.552) {Observed error (Theorem 2.3)};
\gpsetdashtype{gp dt solid}
\draw[gp path] (10.330,6.552)--(11.394,6.552);
\draw[gp path] (2.167,6.643)--(2.517,6.487)--(2.868,6.409)--(3.218,6.276)--(3.568,6.139)%
  --(3.919,5.983)--(4.269,5.818)--(4.619,5.735)--(4.970,5.665)--(5.320,5.578)--(5.670,5.484)%
  --(6.021,5.378)--(6.371,5.260)--(6.721,5.135)--(7.072,4.981)--(7.422,4.876)--(7.772,4.824)%
  --(8.123,4.776)--(8.473,4.699)--(8.823,4.609)--(9.174,4.510)--(9.524,4.400)--(9.874,4.250)%
  --(10.225,4.145)--(10.575,4.066)--(10.925,4.005)--(11.275,3.956)--(11.626,3.881);
\gppoint{gp mark 7}{(2.167,6.643)}
\gppoint{gp mark 7}{(2.517,6.487)}
\gppoint{gp mark 7}{(2.868,6.409)}
\gppoint{gp mark 7}{(3.218,6.276)}
\gppoint{gp mark 7}{(3.568,6.139)}
\gppoint{gp mark 7}{(3.919,5.983)}
\gppoint{gp mark 7}{(4.269,5.818)}
\gppoint{gp mark 7}{(4.619,5.735)}
\gppoint{gp mark 7}{(4.970,5.665)}
\gppoint{gp mark 7}{(5.320,5.578)}
\gppoint{gp mark 7}{(5.670,5.484)}
\gppoint{gp mark 7}{(6.021,5.378)}
\gppoint{gp mark 7}{(6.371,5.260)}
\gppoint{gp mark 7}{(6.721,5.135)}
\gppoint{gp mark 7}{(7.072,4.981)}
\gppoint{gp mark 7}{(7.422,4.876)}
\gppoint{gp mark 7}{(7.772,4.824)}
\gppoint{gp mark 7}{(8.123,4.776)}
\gppoint{gp mark 7}{(8.473,4.699)}
\gppoint{gp mark 7}{(8.823,4.609)}
\gppoint{gp mark 7}{(9.174,4.510)}
\gppoint{gp mark 7}{(9.524,4.400)}
\gppoint{gp mark 7}{(9.874,4.250)}
\gppoint{gp mark 7}{(10.225,4.145)}
\gppoint{gp mark 7}{(10.575,4.066)}
\gppoint{gp mark 7}{(10.925,4.005)}
\gppoint{gp mark 7}{(11.275,3.956)}
\gppoint{gp mark 7}{(11.626,3.881)}
\gppoint{gp mark 7}{(10.862,6.552)}
\draw[gp path] (2.027,8.307)--(2.027,1.184)--(11.836,1.184)--(11.836,8.307)--cycle;
\gpdefrectangularnode{gp plot 1}{\pgfpoint{2.027cm}{1.184cm}}{\pgfpoint{11.836cm}{8.307cm}}
\end{tikzpicture}
\caption{Observed errors and the error bound from
Theorem~\ref{thm:Stenger-4} for $f_3(t)$ in~\eqref{eq:f_3}.
The DE-Sinc curve shows only the observed error obtained using the
exploratory parameters $\alpha=\beta=1$ and $d=1.17$.
}\label{fig:example3}
\end{figure}

\section{Proofs}
\label{sec:proofs}

This section presents the proof of Theorem~\ref{thm:New1}.
It is organized as follows.
In Section~\ref{sec:sketch-proof},
the proof is decomposed into two lemmas:
Lemmas~\ref{lem:bound-None} and~\ref{lem:truncation-error}.
To establish these lemmas,
useful inequalities are presented in
Sections~\ref{subsec:domDdplus}, \ref{subsec:domDdminus},
and~\ref{subsec:domDd}.
Subsequently,
Lemma~\ref{lem:bound-None} is proved in Section~\ref{subsec:discretization-error},
and Lemma~\ref{lem:truncation-error} is proved in Section~\ref{subsec:truncation-error}.

Throughout this section, the two logarithms appearing in $\phi_5$
are understood as analytic branches on $\domD_{\pi/2}$ obtained by
analytic continuation from the real axis.
More precisely, $\log(1+\E^{\pi\sinh\zeta})$ is normalized to be
real for real $\zeta$, and the outer logarithm is defined similarly.
These branches are well defined because
$1+\E^{\pi\sinh\zeta}\neq 0$ and
$\log(1+\E^{\pi\sinh\zeta})\neq 0$ in $\domD_{\pi/2}$.

\subsection{Sketch of the proof}
\label{sec:sketch-proof}

Define $F(\zeta)=f(\phi_5(\zeta))$ for $\zeta\in\domD_d$.
Since $\phi_5$ maps $\mathbb{R}$ bijectively onto $\mathbb{R}$,
for each $t\in\mathbb{R}$, put $x=\phi_5^{-1}(t)$.
The main strategy in the proof of Theorem~\ref{thm:New1} is
to split the error into two terms as follows:
\begin{align*}
&\left|
f(t) -
\sum_{k=-M}^N f(\phi_5(kh))S(k,h)(\phi_5^{-1}(t))
\right|\\
&=\left|
F(x) -
\sum_{k=-M}^N F(kh)S(k,h)(x)
\right|\\
&\leq \left|
F(x) -
\sum_{k=-\infty}^{\infty} F(kh)S(k,h)(x)
\right|
 +
\left|
\sum_{k=-\infty}^{-M-1} F(kh)S(k,h)(x)
+\sum_{k=N+1}^{\infty} F(kh)S(k,h)(x)
\right|.
\end{align*}
The first and second terms are called the discretization and truncation
errors, respectively.
The following function space is used to bound the discretization error.

\begin{definition}
Let $d$ be a positive constant, and
let $\domD_d(\epsilon)$ be a rectangular domain defined
for $0<\epsilon<1$ by
\[
\domD_d(\epsilon)
= \{\zeta\in\mathbb{C}:|\Re\zeta|<1/\epsilon,\, |\Im\zeta|<d(1-\epsilon)\}.
\]
Then, $\mathbf{H}^1(\domD_d)$ denotes the family of all analytic functions $F$
on $\domD_d$ such that the norm $\mathcal{N}_1(F,d)$ is finite, where
\[
\mathcal{N}_1(F,d)
=\lim_{\epsilon\to 0}\oint_{\partial \domD_d(\epsilon)} |F(\zeta)||\mathrm{d}\zeta|.
\]
\end{definition}

For this function space,
the discretization error is estimated as follows.

\begin{theorem}[Stenger~{\cite[Theorem~3.1.3]{stenger93:_numer}}]
\label{thm:discretization-error}
Let $F\in\mathbf{H}^1(\domD_d)$, and let $h>0$. Then,
\[
\sup_{x\in\mathbb{R}}\left|
F(x) - \sum_{k=-\infty}^{\infty}F(kh)S(k,h)(x)
\right|
\leq \frac{\mathcal{N}_1(F,d)}{\pi d(1 - \E^{-2\pi d/h})}\E^{-\pi d/h}.
\]
\end{theorem}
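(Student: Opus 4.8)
The plan is to represent the discretization error as a single contour integral over $\partial\domD_d$ and then estimate the integrand directly. First I would introduce the auxiliary function
\[
 g(\zeta) = \frac{F(\zeta)\sin(\pi x/h)}{(\zeta - x)\sin(\pi\zeta/h)},
\]
whose poles inside the strip $\domD_d$ are the point $\zeta = x$ together with the sampling nodes $\zeta = kh$ $(k\in\mathbb{Z})$. A direct residue computation gives $\operatorname{Res}_{\zeta=x} g = F(x)$, while at each node the simple zero of $\sin(\pi\zeta/h)$ contributes $\operatorname{Res}_{\zeta=kh} g = -F(kh)S(k,h)(x)$, where one uses $\sin(\pi(x-kh)/h) = (-1)^k\sin(\pi x/h)$ to recognize the Sinc function. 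Summing the residues therefore reproduces exactly $F(x) - \sum_{k}F(kh)S(k,h)(x)$, which is the quantity to be bounded.

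Next I would make this rigorous using the finite rectangles $\domD_d(\epsilon)$ from the definition of $\mathbf{H}^1(\domD_d)$. Applying the residue theorem on $\partial\domD_d(\epsilon)$ and letting $\epsilon\to 0$ yields the representation
\[
 F(x) - \sum_{k=-\infty}^{\infty}F(kh)S(k,h)(x)
 = \frac{\sin(\pi x/h)}{2\pi\I}\oint_{\partial\domD_d}\frac{F(\zeta)}{(\zeta-x)\sin(\pi\zeta/h)}\D\zeta,
\]
where $\partial\domD_d$ denotes the two horizontal lines $\Im\zeta=\pm d$ traversed with the strip on the left. The step that needs care here is showing that the contributions of the vertical sides $\Re\zeta=\pm 1/\epsilon$ vanish as $\epsilon\to 0$; this is precisely where the finiteness of $\mathcal{N}_1(F,d)$ enters, since integrability of the boundary modulus forces those contributions to tend to zero along the limiting process. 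One must also confirm convergence of the cardinal series and the legitimacy of interchanging the summation with the limit, which again follows from the $\mathbf{H}^1$ bound.

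Finally I would estimate the integrand on $\Im\zeta = \pm d$. For real $x$ we have $|\sin(\pi x/h)|\leq 1$; writing $\zeta=\xi\pm\I d$ gives $|\sin(\pi\zeta/h)|^2 = \sin^2(\pi\xi/h)+\sinh^2(\pi d/h)\geq\sinh^2(\pi d/h)$, so that $|\sin(\pi\zeta/h)|\geq\sinh(\pi d/h)$; and since $\Im(\zeta-x)=\pm d$, we have $|\zeta-x|\geq d$. Inserting these bounds into the representation and recognizing $\oint_{\partial\domD_d}|F(\zeta)|\,|\D\zeta| = \mathcal{N}_1(F,d)$ gives
\[
 \left|F(x)-\sum_{k=-\infty}^{\infty}F(kh)S(k,h)(x)\right|
 \leq \frac{1}{2\pi}\cdot\frac{1}{d}\cdot\frac{\mathcal{N}_1(F,d)}{\sinh(\pi d/h)}.
\]
Using the identity $1/\sinh(\pi d/h) = 2\E^{-\pi d/h}/(1-\E^{-2\pi d/h})$ then collapses the right-hand side to the stated bound. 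The only genuinely delicate point is the limiting argument of the previous paragraph; once the contour representation is secured, the remaining estimate is a routine termwise bound.
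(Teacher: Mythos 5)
The paper does not prove this statement; it is imported verbatim from Stenger's book, so there is no internal proof to compare against. Your argument is correct and is essentially Stenger's own proof of the cited theorem: the residue bookkeeping for $g(\zeta)=F(\zeta)\sin(\pi x/h)/\{(\zeta-x)\sin(\pi\zeta/h)\}$ does yield $F(x)$ at $\zeta=x$ and $-F(kh)S(k,h)(x)$ at $\zeta=kh$, the bounds $|\sin(\pi\zeta/h)|\geq\sinh(\pi d/h)$, $|\zeta-x|\geq d$, $|\sin(\pi x/h)|\leq 1$ on $\Im\zeta=\pm d$ are all valid, and the identity $1/\sinh(\pi d/h)=2\E^{-\pi d/h}/(1-\E^{-2\pi d/h})$ reproduces the stated constant exactly. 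The one step you leave schematic --- that the vertical-side contributions vanish as $\epsilon\to 0$, which does not follow from mere finiteness of $\mathcal{N}_1(F,d)$ without a little extra care (on $\Re\zeta=\pm 1/\epsilon$ the factor $\sin(\pi\zeta/h)$ can be small near the real axis, so one must choose the truncation abscissae or argue via the $\mathbf{H}^1$ structure as Stenger does) --- is exactly the delicate point you flag, so the proposal is sound.
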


We use the following lemma,
which completes the estimation of the discretization error.
The proof is provided in Section~\ref{subsec:discretization-error}.

\begin{lemma}
\label{lem:bound-None}
Assume that the assumptions of Theorem~\ref{thm:New1} are satisfied.
Then, the function $F(\zeta)=f(\phi_5(\zeta))$
belongs to $\mathbf{H}^1(\domD_d)$, and $\mathcal{N}_1(F,d)$ is bounded as
\[
 \mathcal{N}_1(F,d)
\leq \frac{2 C_{\mathrm{D}}}{\pi\cos d},
\]
where $C_{\mathrm{D}}$ is the constant defined by~\eqref{eq:C_D}.
\end{lemma}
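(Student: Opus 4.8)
The plan is to evaluate $\mathcal{N}_1(F,d)$ directly from its definition as a contour integral over $\partial\domD_d(\epsilon)$, to reduce it to the two horizontal edges, and then to bound each edge by means of the decay hypotheses~\eqref{eq:f-bound-minus} and~\eqref{eq:f-bound-plus}. Throughout I would exploit the factorization $\phi_5(\zeta)=\tilde\psi_5(\pi\sinh\zeta)$, where $\tilde\psi_5$ is the transformation of Theorem~\ref{thm:Stenger-4}; this reduces every estimate on $\phi_5$ over the strip to an estimate on $\tilde\psi_5$ along the curved image $\pi\sinh(\mathbb{R}\pm\I d)$ of the boundary lines.

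First I would settle analyticity: since $0<d<d_L$, the condition~\eqref{eq:d_L} is exactly what keeps $\pi\sinh(\domD_d)$ inside the region on which the outer map $w\mapsto 2\sinh(\log(\log(1+\E^{w})))$ is holomorphic and single-valued, so that $F=f\circ\phi_5$ is analytic on $\domD_d$; the requisite containment is the business of the preparatory Section~\ref{subsec:domDd}. Next I would dispose of the two vertical edges of $\partial\domD_d(\epsilon)$ at $\Re\zeta=\pm1/\epsilon$: their length is the bounded quantity $2d(1-\epsilon)$, while $|F(\zeta)|\to0$ uniformly in the imaginary part as $\Re\zeta\to+\infty$ (double-exponential decay supplied by~\eqref{eq:f-bound-plus}) and as $\Re\zeta\to-\infty$ (here $|\phi_5(\zeta)|\to\infty$, so~\eqref{eq:f-bound-minus} forces decay), whence their contribution vanishes in the limit. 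Working at height $d(1-\epsilon)$ and letting $\epsilon\to0$ then gives
\[
\mathcal{N}_1(F,d)=\int_{-\infty}^{\infty}|F(x+\I d)|\D x+\int_{-\infty}^{\infty}|F(x-\I d)|\D x ,
\]
and since the hypotheses on $f$ and the domains $\phi_5(\domD_d^{\pm})$ are invariant under conjugation, the two integrals obey the same bound, so it suffices to estimate the upper one and multiply by $2$.

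For the upper edge I would split at $x=0$. On $x\ge 0$ the point $\zeta=x+\I d$ lies over $\domD_d^{+}$, so~\eqref{eq:f-bound-plus} gives $|F(x+\I d)|\le K_{+}\E^{-\beta\Re\phi_5(x+\I d)}$, while on $x\le 0$ the estimate~\eqref{eq:f-bound-minus} gives $|F(x+\I d)|\le K_{-}|\phi_5(x+\I d)|^{-\alpha}$. The core of the argument is therefore a lower bound for $\Re\phi_5(x+\I d)=\Re\tilde\psi_5(\pi\sinh(x+\I d))$ on the right and for $|\phi_5(x+\I d)|=|\tilde\psi_5(\pi\sinh(x+\I d))|$ on the left. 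Because $v=\pi\sinh(x+\I d)$ has $\Re v=\pi\cos d\,\sinh x$ and $\Im v=\pi\sin d\,\cosh x$, I would track the modulus and argument through the successive layers $v\mapsto\E^{v}\mapsto\log(1+\E^{v})\mapsto\log(\cdot)\mapsto 2\sinh(\cdot)$, keeping $\cos$ of the accumulated argument bounded below so that the outer $\sinh$ retains a definite real part; this is where the thresholds $r_0=\arsinh(L/(\pi\cos d))$ (the value of $x$ at which $\Re v=L$) and $r_1=\log((1+\cos d)/\sin d)$ partition the range of $x$ into regions treated separately, and where the conditions~\eqref{eq:d_L} and~\eqref{eq:threshold-d-L} guarantee that the curve $\pi\sinh(\mathbb{R}+\I d)$ stays clear of the singularities of $\tilde\psi_5$. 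These are precisely the inequalities I would establish in Sections~\ref{subsec:domDdplus} and~\ref{subsec:domDdminus}, and they account for the two cosine factors $\cos((\pi/2)\sin d)$ and $\cos((\pi/2)/\cosh(r_1-r_0))$ in~\eqref{eq:C_D}.

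Granting these lower bounds, the asymptotics $\Re\phi_5(x+\I d)\sim\pi\cos d\,\sinh x$ as $x\to+\infty$ and $\log|\phi_5(x+\I d)|\sim-\pi\cos d\,\sinh x$ as $x\to-\infty$ reduce both estimates to exponentials in $\pm\pi\cos d\,\sinh x$. Integrating over $x$ with the substitution $u=\sinh x$ and the elementary inequality $\cosh x\ge 1$ converts $\int_0^{\infty}\E^{-\beta\pi\cos d\,\sinh x}\D x$ and $\int_{-\infty}^{0}\E^{\alpha\pi\cos d\,\sinh x}\D x$ into $1/(\beta\pi\cos d)$ and $1/(\alpha\pi\cos d)$ times the corresponding constant prefactors, which assemble into the two summands $\frac{K_{-}}{\alpha}[\cdots]^{\alpha}$ and $\frac{K_{+}}{\beta}[\cdots]^{\beta}$ of $C_{\mathrm{D}}$, all carrying the common factor $1/(\pi\cos d)$. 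Adding the left and right contributions bounds the upper edge by $C_{\mathrm{D}}/(\pi\cos d)$, and doubling yields $\mathcal{N}_1(F,d)\le 2C_{\mathrm{D}}/(\pi\cos d)$; finiteness of this quantity simultaneously establishes $F\in\mathbf{H}^1(\domD_d)$. The main obstacle is unquestionably the middle step---the uniform lower bounds on $\Re\tilde\psi_5$ and $|\tilde\psi_5|$ along the curved contour---since the image $\pi\sinh(\mathbb{R}+\I d)$ is not a horizontal line and the nested logarithms force a delicate, region-by-region control of both modulus and phase.
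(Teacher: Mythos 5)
Your overall architecture coincides with the paper's: reduce $\mathcal{N}_1(F,d)$ to the two horizontal edges (the vertical edges vanish by the decay of $|F|$), split each edge at $x=0$, apply~\eqref{eq:f-bound-plus} on the right and~\eqref{eq:f-bound-minus} on the left, and integrate using $\cosh x\geq 1$ so that $\int_0^{\infty}\E^{-\pi\beta\cos d\,\sinh x}\D x\leq 1/(\pi\beta\cos d)$. That skeleton is correct and is exactly how the paper assembles the final constant.

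The gap is in what you call the ``core of the argument.'' Your plan for the left half-line --- track modulus and argument through the layers $v\mapsto\E^{v}\mapsto\log(1+\E^{v})\mapsto\log(\cdot)\mapsto 2\sinh(\cdot)$ and keep the accumulated phase under control --- is not carried out, and it is not the mechanism that actually produces the bound or the constant $C_{\mathrm{D}}$. The paper instead works algebraically: writing $\eta=\log(1+\E^{\pi\sinh\zeta})$, it uses $\phi_5(\zeta)=\eta-1/\eta=(\eta-1)(\eta+1)/\eta$, so that
\[
\frac{1}{|\phi_5(\zeta)|^{\alpha}}
=\frac{1}{|\eta-1|^{\alpha}}\left|\frac{\eta}{\eta+1}\right|^{\alpha},
\]
bounds the first factor by $(1-\log 2)^{-\alpha}$ (Lemma~\ref{lem:bound-by-1-minus-log2}), and converts the second factor, via the new Lemma~\ref{lem:bound-log-by-exp}, into $\bigl(\tfrac{\E^2+\E+1}{\E^2+\E}\bigr)^{\alpha}|1+\E^{-L-\pi\sinh\zeta}|^{-\alpha}$ with $L=\log(\E/(\E-1))$. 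Everything then hinges on the single hard estimate (Lemma~\ref{lem:important-estimate}, proved through Propositions~\ref{prop:cosh-x1-x0-decrease}--\ref{prop:H-u-nonnegative} and Lemma~\ref{lem:G-minimum}) that $|1+\E^{-L-\pi\sinh(x+\I y)}|^{-1}\leq\bigl[(1+\E^{-L-\pi\sinh(x)\cos y})\cos c_{|y|}\bigr]^{-1}$ uniformly in $x$; this is where the thresholds $r_0,r_1$, the condition~\eqref{eq:threshold-d-L}, and the factor $\cos((\pi/2)/\cosh(r_1-r_0))$ in~\eqref{eq:C_D} genuinely enter, and also where the leftover $\E^{L\alpha}=(\E/(\E-1))^{\alpha}$ from the integration combines with $\tfrac{\E^2+\E+1}{\E^2+\E}$ to give $\tfrac{\E^2+\E+1}{\E^2-1}$. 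None of this reduction appears in your proposal, and without it the claimed constant cannot be obtained; your attribution of $d_L$ to single-valuedness of the outer map, rather than to these estimates, also misplaces where the real difficulty lies. (The right half-line is similarly handled multiplicatively via $|\E^{-\phi_5(\zeta)}|^{\beta}=|\E^{1/\eta}|^{\beta}|1+\E^{\pi\sinh\zeta}|^{-\beta}$ and Lemma~\ref{lem:DE-func-bound}, rather than by a lower bound on $\Re\phi_5$, though there the two routes are essentially equivalent.)
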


In addition, we bound the truncation error as follows.
The proof is provided in Section~\ref{subsec:truncation-error}.

\begin{lemma}
\label{lem:truncation-error}
Assume that the assumptions of
Theorem~\ref{thm:New1} are satisfied.
Then, putting $F(x)=f(\phi_5(x))$, we have
\[
\sup_{x\in\mathbb{R}}
\left|
\sum_{k=-\infty}^{-M-1}
F(kh)S(k,h)(x)
+\sum_{k=N+1}^{\infty}
F(kh)S(k,h)(x)
\right|
\leq \frac{C_{\mathrm{T}}}{\pi d} \E^{-\pi d n/\log(2 d n/\mu)},
\]
where $C_{\mathrm{T}}$ is the constant defined by~\eqref{eq:C_T}.
\end{lemma}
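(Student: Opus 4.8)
The plan is to start from the elementary pointwise bound $|S(k,h)(x)|\leq 1$, valid for every $x\in\mathbb{R}$ and every $k\in\mathbb{Z}$, which removes the dependence on $x$ and reduces the claim to estimating the two one-sided tails
\[
\sum_{k=-\infty}^{-M-1}|F(kh)| + \sum_{k=N+1}^{\infty}|F(kh)|,
\]
where $F(kh)=f(\phi_5(kh))$. For $k\leq -M-1$ the node $kh$ is real and negative, so $\phi_5(kh)\in\phi_5(\domD_d^{-})$ and assumption~\eqref{eq:f-bound-minus} gives $|F(kh)|\leq K_{-}|\phi_5(kh)|^{-\alpha}$; for $k\geq N+1$ the node $kh$ is real and positive, so $\phi_5(kh)\in\phi_5(\domD_d^{+})$ and~\eqref{eq:f-bound-plus} gives $|F(kh)|\leq K_{+}\E^{-\beta\phi_5(kh)}$, the value $\phi_5(kh)$ being real. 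I would treat the two tails separately, producing one contribution proportional to $K_{-}$ and one proportional to $K_{+}$, matching the two summands of $C_{\mathrm{T}}$ in~\eqref{eq:C_T}.

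Next I would convert the algebraic and exponential decay of $f$ into double-exponential decay in the summation index by using the real-axis specialisations ($\Im\zeta=0$) of the estimates on $\phi_5$ developed in Sections~\ref{subsec:domDdplus} and~\ref{subsec:domDdminus}. Concretely, I expect lower bounds of the shape $\phi_5(x)\geq(\pi/2)(\E^{x}-1)-1/\log 2$ for $x\geq 0$ and $|\phi_5(x)|\geq(1-\log 2)\E^{(\pi/2)\E^{-x}-\pi/2}$ for $x\leq 0$, which turn the two tails into
\[
K_{-}\left(\frac{\E^{\pi/2}}{1-\log 2}\right)^{\alpha}\sum_{j=M+1}^{\infty}\E^{-\alpha(\pi/2)\E^{jh}}
\quad\text{and}\quad
K_{+}\left(\E^{(\pi/2)+1/\log 2}\right)^{\beta}\sum_{k=N+1}^{\infty}\E^{-\beta(\pi/2)\E^{kh}},
\]
respectively, the substitution $j=-k$ being used in the left tail. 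The prefactors here are precisely the two constants appearing in $C_{\mathrm{T}}$.

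The remaining task is to estimate a tail of the form $\sum_{k=N+1}^{\infty}\E^{-c(\pi/2)\E^{kh}}$. Since $g(x)=\E^{-c(\pi/2)\E^{x}}$ is non-increasing, I would compare the sum with an integral, $\sum_{k=N+1}^{\infty}g(kh)\leq h^{-1}\int_{Nh}^{\infty}g(x)\D x$, and after the substitution $u=\E^{x}$ together with $1/u\leq\E^{-Nh}$ obtain the closed bound $h^{-1}\E^{-Nh}\tfrac{2}{\pi c}\E^{-c(\pi/2)\E^{Nh}}$. Here the choices of $N$, $M$ in~\eqref{eq:Def-MN-DE} and of $h$ in~\eqref{eq:Def-h-DE} enter through the elementary inequalities $\E^{Nh}\geq 2dn/\beta$ and $\E^{Mh}\geq 2dn/\alpha$, which follow from $Nh\geq nh-\log(\beta/\mu)=\log(2dn/\beta)$ and the analogous bound for $M$. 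These make the algebraic prefactor $h^{-1}\E^{-Nh}\tfrac{2}{\pi\beta}$ at most $\{\pi d\log(2dn/\mu)\}^{-1}\leq(\pi d)^{-1}$, while the double-exponential factor satisfies $\E^{-\beta(\pi/2)\E^{Nh}}\leq\E^{-\pi d n}\leq\E^{-\pi d/h}=\E^{-\pi d n/\log(2dn/\mu)}$; the last inequality is exactly where the hypothesis $n\geq\mu\E/(2d)$ is used, since it is equivalent to $\log(2dn/\mu)=nh\geq 1$. Adding the two tails then yields the asserted bound with constant $C_{\mathrm{T}}/(\pi d)$.

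The main obstacle is not the summation, which is routine once the decay is in double-exponential form, but the derivation of the two real-axis lower bounds on $\phi_5$ with the sharp explicit constants $\E^{\pi/2}/(1-\log 2)$ and $\E^{(\pi/2)+1/\log 2}$. These require a careful analysis of the nested composition $2\sinh(\log(\log(1+\E^{\pi\sinh x})))$ — in particular monotonicity and convexity estimates that control $\log(1+\E^{\pi\sinh x})$ from below near $x=0$, where $\phi_5$ is still negative, and that capture its $(\pi/2)\E^{\pm x}$ growth as $x\to\pm\infty$; this is precisely what the inequalities established in Sections~\ref{subsec:domDdplus} and~\ref{subsec:domDdminus} are designed to supply.
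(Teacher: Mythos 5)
Your proposal follows essentially the same route as the paper's proof: bound $|S(k,h)(x)|\leq 1$, use the real-axis specializations of the auxiliary inequalities to obtain double-exponential decay of $F$ at the nodes with exactly the constants appearing in $C_{\mathrm{T}}$, compare the two tails with integrals, and invoke~\eqref{eq:Def-MN-DE},~\eqref{eq:Def-h-DE} and $n\geq\mu\E/(2d)$ (i.e.\ $\log(2dn/\mu)\geq 1$) to conclude. The two real-axis lower bounds on $\phi_5$ that you flag as the main obstacle are precisely the ones the paper uses --- they follow elementarily from $\log(1+u)\leq u$ together with~\eqref{eq:domDdplus},~\eqref{eq:bound-by-1-minus-log2} and~\eqref{eq:bound-log-by-exp-real} --- and your integral estimate, though organized slightly differently (substitution $u=\E^x$ rather than inserting $\cosh(x)/\cosh(Nh)$), yields the same bound.
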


Put $q=2dn/\mu$.
Combining Theorem~\ref{thm:discretization-error}
with Lemmas~\ref{lem:bound-None} and~\ref{lem:truncation-error},
and using the choice of $h$ in~\eqref{eq:Def-h-DE},
we obtain the following estimate.
The lower bound on $n$ in Theorem~\ref{thm:New1} implies $q\geq\E$.
Since $q/\log q$ is monotonically increasing for $q\geq\E$,
$q/\log q\geq\E$.  Therefore,
\begin{align*}
  \left|
  F(x)
- \sum_{k=-M}^N F(kh) S(k,h)(x)
\right|
&\leq \frac{2C_{\mathrm{D}} }{\pi d (1 - \E^{-2\pi d/h})\pi\cos d}\E^{-\pi d/h}
+ \frac{C_{\mathrm{T}}}{\pi d}\E^{-\pi d n/\log(2 d n/\mu)}\\
&= \frac{1}{\pi d}\left\{
\frac{2 C_{\mathrm{D}}}{(1-\E^{-\pi \mu q/\log q})\pi\cos d}
+ C_{\mathrm{T}}
\right\}
\E^{-\pi d n/\log(2 d n/\mu)}\\
&\leq \frac{1}{\pi d}\left\{
\frac{2 C_{\mathrm{D}}}{(1-\E^{-\pi \mu \E})\pi\cos d}
+ C_{\mathrm{T}}
\right\}
\E^{-\pi d n/\log(2 d n/\mu)},
\end{align*}
from which we obtain the desired error bound.
This completes the proof of Theorem~\ref{thm:New1}.

\subsection{Useful inequality on $\domD_d^{+}$}
\label{subsec:domDdplus}

Here, the following lemma is prepared.
This lemma was previously stated in an existing study~\cite{okayama22:_doubl},
but the proof was omitted due to space restrictions.
For readers' convenience,
its explicit proof is provided.
Note that $\overline{\domD}$
denotes the closure of $\domD$.
Note that although $1+\E^{\pi\sinh\zeta}$ has no zeros in
$\domD_{\pi/2}$, it may vanish at some points on the boundary.
Nevertheless, the function
\[
 \zeta\mapsto\frac{1}{\log(1+\E^{\pi\sinh\zeta})}
\]
admits a continuous extension to each such boundary point,
with value zero.  In what follows, this function is understood
in this extended sense on $\overline{\domD_{\pi/2}}$.

\begin{lemma}[Okayama~{\cite[Lemma~7]{okayama22:_doubl}}]
It holds for all $\zeta\in\overline{\domD_{\pi/2}^{+}}$ that
\begin{equation}
 \left|\frac{1}{\log(1+\E^{\pi\sinh\zeta})}\right|
\leq \frac{1}{\log 2}.
\label{eq:domDdplus}
\end{equation}
\end{lemma}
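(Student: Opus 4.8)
The plan is to rewrite the assertion~\eqref{eq:domDdplus} in the equivalent form $|\log(1+\E^{\pi\sinh\zeta})|\ge\log 2$ and to prove this lower bound by a maximum-modulus argument. Set $F(\zeta)=\log(1+\E^{\pi\sinh\zeta})$, taking the branch that is analytic on $\domD_{\pi/2}^{+}$ and real on the positive real axis, and put $G=1/F$. First I would record that for $\zeta=x+\I y\in\overline{\domD_{\pi/2}^{+}}$ one has $\Re(\pi\sinh\zeta)=\pi\sinh x\cos y\ge 0$, so $|\E^{\pi\sinh\zeta}|\ge 1$. Consequently $1+\E^{\pi\sinh\zeta}$ stays away from $1$ (its distance to $1$ is $|\E^{\pi\sinh\zeta}|\ge1$), so $F$ is zero-free; since $1+\E^{\pi\sinh\zeta}$ has no zero in the open region, $F$ is analytic there, and hence $G$ is analytic and bounded on $\domD_{\pi/2}^{+}$. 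Moreover $|\E^{\pi\sinh\zeta}|\to\infty$ as $\Re\zeta\to+\infty$, and $\E^{\pi\sinh\zeta}\to-1$ at the two corners $\zeta=\pm\I\pi/2$, so $G\to0$ at the unbounded end and at the corners. A maximum-modulus (Phragm\'{e}n--Lindel\"{o}f) argument then reduces the problem to bounding $|G|$ from above, equivalently $|F|$ from below, only on the boundary $\partial\domD_{\pi/2}^{+}$.

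On the boundary, $\pi\sinh\zeta$ is purely imaginary: the segment $\Re\zeta=0$ gives $\pi\sinh(\I y)=\I\pi\sin y$ with $\pi\sin y\in[-\pi,\pi]$, and the edges $\Im\zeta=\pm\pi/2$ give $\pi\sinh(x\pm\I\pi/2)=\pm\I\pi\cosh x$ with modulus $\ge\pi$. Writing the boundary value of $\E^{\pi\sinh\zeta}$ as $\E^{\I\tau}$ with $\tau\in\mathbb{R}$, I would use $|1+\E^{\I\tau}|=2|\cos(\tau/2)|$, so that $\Re F=\log(2|\cos(\tau/2)|)$, and compute $\Im F$ as follows. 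Between the singularities $\tau=(2k+1)\pi$ the continued branch satisfies $\frac{\mathrm{d}}{\mathrm{d}\tau}F(\I\tau)=\I\,\E^{\I\tau}/(1+\E^{\I\tau})$, whose imaginary part equals $\Re[\E^{\I\tau}/(1+\E^{\I\tau})]=\tfrac12$; and since each zero of $1+\E^{\I\tau}$ is simple and is skirted on the right-half-plane side, $\Im F$ jumps by $+\pi$ at every such $\tau$. With $\Im F=0$ at $\tau=0$ this yields $\Im F=\tau/2$ for $|\tau|<\pi$ and $|\Im F|\ge 3\pi/2$ for $|\tau|>\pi$.

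It then remains to check $|F|\ge\log2$ on each boundary piece. On the two edges $|\tau|>\pi$ this is immediate, since $|F|\ge|\Im F|\ge 3\pi/2>\log2$. On the segment, putting $s=\tau/2\in[-\pi/2,\pi/2]$ gives $|F|^2=[\log(2\cos s)]^2+s^2$, so the claim reduces to the one-variable inequality $[\log(2\cos s)]^2+s^2\ge(\log2)^2$. This is the hard part of the proof: the bound is attained (equality at $s=0$, which matches the equality in~\eqref{eq:domDdplus} at $\zeta=0$), so the inequality is tight and cannot be obtained by crude estimation. I would prove it by differentiation, reducing it to $s\ge\tan s\,\log(2\cos s)$. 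This is clear for $s\ge\pi/3$, where the right-hand side is nonpositive, and for $0<s<\pi/3$ it is equivalent to $s\cot s\ge\log(2\cos s)$; the latter is an elementary monotonicity estimate that controls the delicate regime near $s=0$, where $s\cot s\to1$ and $\log(2\cos s)\to\log2$, so the gap tends to $1-\log2>0$.

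Combining the two boundary estimates gives $|F|\ge\log2$ on $\partial\domD_{\pi/2}^{+}$, and the maximum-modulus step propagates this to all of $\overline{\domD_{\pi/2}^{+}}$, which is exactly~\eqref{eq:domDdplus}. The two places that demand care are the tight one-variable inequality near $s=0$ (the genuine obstacle, since the estimate is sharp there) and the correct bookkeeping of the branch of $\Im F$ across the boundary singularities; the remaining steps are routine.
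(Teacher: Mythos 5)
Your proof is correct in outline but takes a genuinely different route from the paper's. The paper changes variables to $w=\pi\sinh\zeta$, which ranges over the closed right half-plane, and splits that half-plane into three pieces: a bounded rectangle $0\le\Re w\le\log 3$, $|\Im w|\le\pi$, on which it applies the maximum modulus principle and checks the four edges separately; the region $|\Im w|>\pi$, handled by the argument bound $|\arg(1+\E^{w})|\ge\pi$; and the region $\Re w>\log 3$, handled by $|1+\E^{w}|\ge\E^{\Re w}-1\ge 2$. You instead apply the maximum principle once, globally, on the half-strip in the $\zeta$-variable, exploiting the observation that $\pi\sinh\zeta$ is purely imaginary on the entire boundary $\partial\domD_{\pi/2}^{+}$; this collapses everything to the single sharp inequality $\{\log(2\cos s)\}^{2}+s^{2}\ge(\log 2)^{2}$ on the left segment plus the bound $|\Im F|\ge 3\pi/2$ on the horizontal edges. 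Both arguments ultimately rest on that same one-variable inequality (the paper cites \cite[Lemma~4.6]{tomoaki21:_new} for it; your derivative reduction to $s\cot s\ge\log(2\cos s)$ is a workable elementary proof, e.g.\ via the series comparison $s\cot s\ge 1+\log\cos s>\log 2+\log\cos s$). What your version buys is a cleaner, more unified case analysis; what it costs is that the maximum-modulus step now runs over an unbounded domain, so it needs a genuine Phragm\'{e}n--Lindel\"{o}f justification: you must verify that $1/F$ is bounded (or at least of subcritical growth) on the half-strip, which does not follow from zero-freeness alone, and your supporting claim that $|\E^{\pi\sinh\zeta}|\to\infty$ as $\Re\zeta\to+\infty$ is not uniform near the edges, where $\cos(\Im\zeta)\to 0$ and it is instead the growth of $\Im(\pi\sinh\zeta)$, hence of the continued argument, that forces $|F|\to\infty$. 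These points are fillable, but they are exactly the kind of detail the paper's bounded-rectangle decomposition is designed to avoid; your branch bookkeeping for $\Im F$ across the corner singularities is otherwise consistent with the paper's.
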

\begin{proof}
Let $x = \Re(\pi\sinh\zeta)$ and $y = \Im(\pi\sinh\zeta)$.
Note that $x\geq 0$ because $\zeta\in\overline{\domD_{\pi/2}^{+}}$.
We show
\[
 \left|\log(1 + \E^{x + \I y})\right|\geq \log 2
\]
in the following three cases: (i) $0\leq x\leq \log 3$ and $|y|\leq \pi$,
(ii) $0\leq x\leq \log 3$ and $|y|>\pi$, and (iii) $x> \log 3$.

\begin{enumerate}
 \item[(i)] $0\leq x\leq \log 3$ and $|y|\leq \pi$.
By the maximum modulus principle applied to
$1/\log(1 + \E^{x + \I y})$, it suffices to consider the boundary of
the rectangle region ($0\leq x\leq \log 3$ and $|y|\leq \pi$).
Indeed, $\log(1+\E^{x+\I y})\neq0$ in the interior of the rectangle, because
$\log(1+\E^{x+\I y})=0$ would imply $\E^{x+\I y}=0$.
On the left edge ($x = 0$ and $|y|< \pi$), we have
\begin{equation}
 \left|\log(1 + \E^{x + \I y})\right|
= \left|\log(1 + \E^{\I y})\right|
= \left|\log(2\cos(y/2) \cdot \E^{\I (y/2)})\right|
= \sqrt{\{\log(2\cos(y/2))\}^2 + (y/2)^2}.
\label{eq:log1p-x-0}
\end{equation}
As shown in the existing lemma~\cite[Lemma~4.6]{tomoaki21:_new},
the right-hand side attains its minimum at $y=0$, i.e.,
\[
\sqrt{\{\log(2\cos(y/2))\}^2 + (y/2)^2}
\geq \sqrt{\{\log(2\cos( 0/2))\}^2 + (0/2)^2} = \log 2.
\]
At the corner points $y=\pm\pi$, the same estimate follows by taking
the limit $y\to\pm\pi$.
On the top/bottom edges ($0< x < \log 3$ and $y=\pm\pi$),
we have
\[
 \left|\log(1 + \E^{x + \I y})\right|
=\sqrt{\{\log|1 - \E^x|\}^2 + \{\arg(1 - \E^x)\}^2}
=\sqrt{\{\log(\E^x - 1)\}^2 + \pi^2}
\geq \sqrt{0^2 + \pi^2} > \log 2.
\]
On the right edge ($x=\log 3$ and $|y|\leq\pi$),
we have
\[
 \left|\log(1 + \E^{x + \I y})\right|
=\sqrt{\{\log|1+\E^{x+\I y}|\}^2 + \{\arg(1 + \E^{x + \I y})\}^2}
\geq \sqrt{\{\log(|\E^{x+\I y}| - 1)\}^2 + 0^2}
=\log(\E^x - 1)= \log 2.
\]
Thus, $|\log(1 + \E^{x+\I y})|\geq \log 2$ holds
throughout this rectangle region.
 \item[(ii)] $0\leq x\leq \log 3$ and $|y|> \pi$.
First, let $x>0$ and put $\theta(x,y)=\Im[\log(1+\E^{x+\I y})].$
Then,
\[
  \frac{\uppartial}{\uppartial y} \theta(x,y)
 =\Im\left[\frac{\I \E^{x+\I y}}{1 + \E^{x+\I y}}\right]
 =\Im\left[\frac{-\E^x\sin y+\I (\E^{2x}+\E^x\cos y)}{|1+\E^{x+\I y}|^2}\right]
 =\frac{\E^x(\E^x+\cos y)}
 {|1+\E^{x+\I y}|^2}>0.
\]
Because $\theta(x,\pi)=\pi$ and $\theta(x,-\pi)=-\pi$,
we have $|\theta(x,y)|>\pi$ for $|y|>\pi$. Therefore,
\[
 |\log(1+\E^{x+\I y})|
 \geq|\theta(x,y)|>\pi>\log 2.
\]
For $x=0$ and $1+\E^{\I y}\neq0$, the desired estimate follows
by letting $x\to 0+$.
At points where $1+\E^{\I y}=0$,
the desired inequality follows directly from
the continuous extension defined above.
 \item[(iii)] $x>\log 3$. In this case, we have
\[
 |\log(1 + \E^{x + \I y})|
=\sqrt{\{\log|1+\E^{x+\I y}|\}^2 + \{\arg(1 + \E^{x + \I y})\}^2}
\geq \sqrt{\{\log(|\E^{x+\I y}| - 1)\}^2 + 0^2}
=\log(\E^x - 1)\geq \log 2.
\]
\end{enumerate}
This completes the proof.
\end{proof}

\subsection{Useful inequalities on $\domD_d^{-}$}
\label{subsec:domDdminus}

Here, two lemmas
(Lemmas~\ref{lem:bound-by-1-minus-log2} and~\ref{lem:bound-log-by-exp})
are prepared.
The first lemma (Lemma~\ref{lem:bound-by-1-minus-log2})
was previously stated in an existing study~\cite{okayama22:_doubl},
but the proof was omitted due to space restrictions.
For readers' convenience,
its explicit proof is provided.
The second lemma (Lemma~\ref{lem:bound-log-by-exp})
is a new result.
To assist the proof, Proposition~\ref{prop:g-decrease-increase}
is prepared.

\begin{lemma}[Okayama~{\cite[Lemma~5]{okayama22:_doubl}}]
\label{lem:bound-by-1-minus-log2}
It holds for all $\zeta\in\overline{\domD_{\pi/2}^{-}}$ that
\begin{equation}
 \frac{1}{|-1 + \log(1 + \E^{\pi\sinh\zeta})|}
\leq \frac{1}{1 - \log 2}.
\label{eq:bound-by-1-minus-log2}
\end{equation}
At boundary points where $1+\E^{\pi\sinh\zeta}=0$, the left-hand side
is understood by continuous extension, with value zero.
\end{lemma}
\begin{proof}
By the definition of $\log z$, we have
\[
  \frac{1}{\left|-1 + \log(1 + \E^{\pi\sinh\zeta})\right|}=
\frac{1}{\left|-1 + \log|1 + \E^{\pi\sinh\zeta}| + \I\arg(1 + \E^{\pi\sinh\zeta})\right|}
\leq \frac{1}{\left|-1 + \log|1 + \E^{\pi\sinh\zeta}| + \I \cdot 0\right|}.
\]
Let $x = \Re(\pi\sinh\zeta)$ and $y = \Im(\pi\sinh\zeta)$.
Note that $x\leq 0$ because $\zeta\in\overline{\domD_{\pi/2}^{-}}$.
From the following estimate
\[
 |1 + \E^{x + \I y}|
\leq 1 + |\E^{x + \I y}|
= 1 + \E^x
\leq 1 + \E^0
< \E,
\]
we have $\log|1 + \E^{x + \I y}| < 1$. Therefore,
we have
\[
\frac{1}{\left|-1 + \log|1 + \E^{x + \I y}|\right|}
=
 \frac{1}{1 - \log|1 + \E^{x + \I y}|}
\leq \frac{1}{1 - \log(1 + |\E^{x + \I y}|)}
= \frac{1}{1 - \log(1 + \E^x)}
\leq \frac{1}{1 - \log(1 + \E^0)}
=\frac{1}{1 - \log 2}.
\]
This completes the proof.
\end{proof}

\begin{proposition}
\label{prop:g-decrease-increase}
Let $g(x,y)$ be a function defined by
\begin{equation}
g(x,y)=
\begin{cases}
\dfrac{\cosh(x)-\cos(y)}{x^2+y^2} &((x,y)\neq(0,0)),\\
\dfrac12 &((x,y)=(0,0)).
\end{cases}
\label{eq:g-x-y}
\end{equation}
Then, for arbitrary fixed $y\in\mathbb{R}$,
$g(x,y)$ monotonically decreases for $x<0$ and
monotonically increases for $x>0$.
\end{proposition}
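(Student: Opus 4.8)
The plan is to exploit the evenness of $g$ in the variable $x$ and then reduce the sign analysis of $\partial g/\partial x$ to the single point $y=0$. First I would observe that, for fixed $y$, the map $x\mapsto g(x,y)$ is even, since both $\cosh x$ and $x^2$ are even. Consequently it suffices to prove that $g$ is strictly increasing on $(0,\infty)$; the claimed monotone decrease on $(-\infty,0)$ then follows immediately by symmetry. Note also that for $x\neq 0$ the denominator $x^2+y^2$ is strictly positive, so $g$ is smooth there and differentiation is justified.

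Next I would compute the partial derivative. Writing
\[
\frac{\partial g}{\partial x}
=\frac{N(x,y)}{(x^2+y^2)^2},\qquad
N(x,y)=(x^2+y^2)\sinh x-2x(\cosh x-\cos y),
\]
the task becomes showing $N(x,y)>0$ for all $x>0$ and all $y\in\mathbb{R}$.

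To handle this, I would fix $x>0$ and minimize $N$ over $y$. Since $N$ is even in $y$, its $y$-derivative $\partial_y N=2\bigl(y\sinh x-x\sin y\bigr)$ is odd in $y$, and for $y>0$ the elementary bounds $\sinh x>x$ and $\sin y\le y$ give $y\sinh x>xy\ge x\sin y$, so $\partial_y N>0$ on $(0,\infty)$ and, by oddness, $\partial_y N<0$ on $(-\infty,0)$. Hence $y=0$ is the unique minimizer, and it remains only to verify $N(x,0)>0$. Writing $N(x,0)=x\,h(x)$ with $h(x)=x\sinh x-2\cosh x+2$, I would check $h(0)=0$, $h'(x)=x\cosh x-\sinh x$ with $h'(0)=0$, and $h''(x)=x\sinh x>0$ for $x>0$; integrating twice shows first $h'(x)>0$ and then $h(x)>0$ on $(0,\infty)$. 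Therefore $N(x,y)\ge N(x,0)=x\,h(x)>0$, which gives $\partial g/\partial x>0$ for $x>0$ and completes the argument.

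The main obstacle is the sign of $N(x,y)$ itself, because of the oscillatory $\cos y$ contribution; the key idea that removes this difficulty is that the stationarity condition $\partial_y N=0$ reads $\sin y/y=\sinh x/x$, which is unsolvable for $y\neq 0$ since its right-hand side exceeds $1$ while its left-hand side never does. This pins the $y$-minimum at $y=0$ and reduces everything to the one-variable inequality $h(x)>0$, which is routine.
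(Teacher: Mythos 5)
Your argument is correct, and it shares only its first step with the paper's proof: both compute $\partial g/\partial x$ and reduce the claim to the positivity, for $x>0$, of the numerator $N(x,y)=(x^2+y^2)\sinh x-2x(\cosh x-\cos y)$ (the paper calls it $\tilde g$). After that the routes genuinely diverge. The paper keeps $y$ frozen as a parameter and differentiates $\tilde g$ three times in $x$, cascading signs back from $x=0$ via $\tilde g(0,y)=\tilde g_{xx}(0,y)=0$, $\tilde g_x(0,y)=4\{(y/2)^2-\sin^2(y/2)\}\geq 0$, and $\tilde g_{xxx}\geq 0$; this handles both signs of $x$ at once but buries the role of $y$ inside one evaluation at $x=0$. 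You instead use evenness in $x$ to discard the half-line $x<0$, then minimize $N$ over $y$ first: the observation that $\partial_y N=0$ would force $\sin y/y=\sinh x/x$, impossible for $x>0$ and $y\neq 0$, pins the minimum at $y=0$ and leaves the one-variable inequality $x\sinh x-2\cosh x+2>0$, which you settle by two routine differentiations. Your version isolates more transparently why the oscillatory $\cos y$ term is harmless, at the cost of the extra (easy) symmetry and minimization steps; the paper's version is slightly more compact but requires verifying a longer chain of derivative signs. Both are elementary and complete; there is no gap in your proposal.
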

\begin{proof}
Differentiating $g$ with respect to $x$,
we have
\[
 g_x(x,y)
= \frac{\tilde{g}(x,y)}{(x^2 + y^2)^2},
\]
where
\[
 \tilde{g}(x,y)= 2x\cos y - 2x\cosh x + (x^2 + y^2)\sinh x.
\]
Differentiating $\tilde{g}$ with respect to $x$, we have
\begin{align*}
 \tilde{g}_x(x,y)
&= 2 \cos y + (x^2 + y^2 - 2)\cosh x,\\
  \tilde{g}_{xx}(x,y)
&= 2 x\cosh x + (x^2 + y^2 - 2)\sinh x,\\
 \tilde{g}_{xxx}(x,y)
&= 4x\sinh x + (x^2 + y^2)\cosh x.
\end{align*}
Because $\tilde{g}_{xxx}(x,y)\geq 0$,
$\tilde{g}_{xx}(x,y)$
is a monotonically increasing function with respect to $x$.
From this and using $\tilde{g}_{xx}(0,y) = 0$,
we have $\tilde{g}_{xx}(x,y)<0$
for $x<0$ and $\tilde{g}_{xx}(x,y)>0$
for $x>0$.
Therefore, $\tilde{g}_{x}(x,y)$
monotonically decreases for $x<0$ and
monotonically increases for $x>0$.
From this and using
$\tilde{g}_x(0,y) = 4\{(y/2)^2 - \sin^2(y/2)\}\geq 0$,
we have $\tilde{g}_x(x,y)\geq 0$
for all $x\in\mathbb{R}$, which implies that
$\tilde{g}(x,y)$ is a monotonically increasing function with respect to $x$.
From this and using $\tilde{g}(0,y)=0$,
we have $g_x(x,y)<0$
for $x<0$ and $g_x(x,y)>0$
for $x>0$.
This completes the proof.
\end{proof}

\begin{lemma}
\label{lem:bound-log-by-exp}
It holds for all $\zeta\in\domD_{\pi/2}^{-}$ that
\begin{equation}
 \left|
\frac{\log(1 + \E^{\pi\sinh\zeta})}{1 + \log(1 + \E^{\pi\sinh\zeta})}
\cdot\frac{\E^{-L} + \E^{\pi\sinh\zeta}}{\E^{\pi\sinh\zeta}}
\right|\leq \frac{\E^2 + \E + 1}{\E^2 + \E},
\label{eq:bound-log-by-exp}
\end{equation}
where $L = \log(\E/(\E - 1))$.
\end{lemma}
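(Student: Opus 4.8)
The plan is to pass to the variable $w=\pi\sinh\zeta$ and estimate the left-hand side as a function of $w$ over the left half-plane. Writing $\zeta=\xi+\I\eta$ with $\xi<0$ and $|\eta|<\pi/2$, we have $\Re w=\pi\sinh(\xi)\cos(\eta)<0$, so it suffices to prove the bound for every $w$ with $\Re w<0$. Setting $v=\log(1+\E^{w})$, the quantity to be controlled becomes $\Phi(w)=\dfrac{v}{1+v}\cdot\dfrac{\E^{-L}+\E^{w}}{\E^{w}}$, and I would first record that $v$ is well defined and analytic there: since $|\E^{w}|<1$, the point $1+\E^{w}$ lies in the open disk of radius $1$ about $1$, hence in the open right half-plane, so it never meets the branch cut of the principal logarithm.

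The conceptual heart of the estimate is that the two factors of $\Phi$ must be bounded jointly rather than separately. The factor $v/(1+v)$ has a pole at the point where $v=-1$, i.e.\ where $\E^{w}=-\E^{-L}$, and this is exactly the zero of the second factor $\E^{-L}+\E^{w}$; the singularity is therefore removable and $\Phi$ is analytic on $\{\Re w<0\}$. This cancellation is essential: as $\Re w\to-\infty$ one has $v\to0$ like $\E^{w}$ while $(\E^{-L}+\E^{w})/\E^{w}\to\infty$ like $\E^{-w}$, so any bound obtained by applying the triangle inequality to the two factors in isolation diverges, whereas their product stays bounded (indeed it tends to $\E^{-L}$). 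Consequently I would not split $\Phi$, but estimate $|\Phi(w)|^{2}$ as a single object.

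To do so I would write everything in the real coordinates $x=\Re w$ and $y=\Im w$. The second factor is elementary: $\bigl|(\E^{-L}+\E^{w})/\E^{w}\bigr|^{2}=\E^{-2L-2x}+2\E^{-L-x}\cos y+1$, and the identities $|\E^{w}\pm1|^{2}=2\E^{x}(\cosh x\pm\cos y)$ bring combinations of the form $\cosh x\pm\cos y$ into play. This is where Proposition~\ref{prop:g-decrease-increase} enters: the monotonicity of $g(x,y)=(\cosh x-\cos y)/(x^{2}+y^{2})$ in $x$ for fixed $y$ lets me compare $|\Phi|^{2}$ on a slice $x<0$ with its value on the boundary slice $x=0$, thereby reducing the two-variable estimate to the one-dimensional problem on the imaginary axis $w=\I y$. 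On that axis $1+\E^{\I y}=2\cos(y/2)\E^{\I y/2}$, so $v=\log(2|\cos(y/2)|)+\I\arg(1+\E^{\I y})$ is explicit, $|\Phi(\I y)|^{2}$ becomes a concrete $2\pi$-periodic even function of $y$, and it remains to bound it on $[0,\pi]$ by $\bigl((\E^{2}+\E+1)/(\E^{2}+\E)\bigr)^{2}$.

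The step I expect to be the main obstacle is the reduction via Proposition~\ref{prop:g-decrease-increase}. The factor $|v/(1+v)|$ depends on $x$ and $y$ jointly through $\log(1+\E^{w})$ and is not literally a function of $g$, so converting the monotonicity of $g$ into a clean comparison for the full product $|\Phi|^{2}$ requires combining it with careful estimates of $|v|$ and $|1+v|$, precisely in the regime near the cancellation point $\E^{w}=-\E^{-L}$ where both $|v/(1+v)|$ and the second factor are individually large. Once the worst case has been pinned to the imaginary axis, the concluding one-dimensional inequality is routine calculus, in the same spirit as the boundary computation performed above for $\domD_{\pi/2}^{+}$.
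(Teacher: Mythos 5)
Your proposal correctly identifies several structural features — the removable singularity where the pole of $v/(1+v)$ at $v=-1$ cancels the zero of $\E^{-L}+\E^{w}$, the need to bound the product jointly, and the relevance of the identities $|\E^{w}\pm 1|^{2}=2\E^{x}(\cosh x\pm\cos y)$ — but it stops short of the one algebraic step that makes the lemma tractable, and the step you do propose is set up in the wrong coordinates. The paper substitutes $\eta=\log(1+\E^{\pi\sinh\zeta})$ (your $v$, not your $w$) and observes that, since $\E^{-L}=(\E-1)/\E$, the entire left-hand side equals
\[
\frac{1}{\E}\left|\frac{\eta}{\E^{\eta}-1}\cdot\frac{\E^{\eta+1}-1}{\eta+1}\right|
=\sqrt{\frac{1}{\E}\cdot\frac{g(x+1,y)}{g(x,y)}},\qquad x=\Re\eta,\ y=\Im\eta,
\]
with $g$ as in Proposition~\ref{prop:g-decrease-increase}; the exponential prefactors cancel exactly, leaving a pure ratio of $g$ at $x$ and $x+1$. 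The condition $\Re\zeta<0$ then gives $|\E^{\pi\sinh\zeta}|<1$, hence $\Re(1+\E^{\pi\sinh\zeta})>0$ and $|y|\leq\pi/2$, and three elementary cases in $x$ ($x\leq-1$, $-1<x\leq 2$, $x>2$) finish the proof, the constant $(\E^{2}+\E+1)/(\E^{2}+\E)$ emerging as $\sqrt{(\cosh 3-1)/(\E(\cosh 2-1))}$ in the last case. This is where the monotonicity of $g$ is actually used: to compare $g$ at the two shifted abscissae $x$ and $x+1$ of the \emph{same} function, not to push the estimate to a boundary slice.

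By contrast, you propose to apply Proposition~\ref{prop:g-decrease-increase} in the $w=\pi\sinh\zeta$ coordinates to reduce the two-variable problem to the imaginary axis $\Re w=0$. As you yourself flag, $|\Phi(w)|$ is not a function of $g(\Re w,\Im w)$, so the monotonicity of $g$ in $\Re w$ gives no comparison between a slice $\Re w<0$ and the slice $\Re w=0$; this is precisely the unresolved "main obstacle" in your write-up, and it is the heart of the proof rather than a technical detail. (A boundary reduction could in principle be salvaged by Phragm\'en--Lindel\"of, since $\Phi$ extends to a bounded analytic function on the half-plane, but that is a different argument from the one you invoke, and you would still owe the one-dimensional estimate on $\Re w=0$, which you do not carry out.) So the proposal as written has a genuine gap: the key change of variables to $\eta$ and the resulting exact identity with $g(x+1,y)/g(x,y)$ are missing, and without them the plan does not close.
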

\begin{proof}
Putting $\eta = \log(1 + \E^{\pi\sinh\zeta})$, we have
\begin{align*}
  \left|
\frac{\log(1 + \E^{\pi\sinh\zeta})}{1 + \log(1 + \E^{\pi\sinh\zeta})}
\cdot\frac{\E^{-L} + \E^{\pi\sinh\zeta}}{\E^{\pi\sinh\zeta}}
\right|
&=\left|
\frac{\eta}{1 + \eta}\cdot\frac{\frac{\E-1}{\E} + \E^{\eta} - 1}{\E^{\eta} - 1}
\right|
=\frac{1}{\E}\left|
\frac{\eta}{\E^{\eta} - 1}\cdot\frac{\E^{\eta+1} - 1}{\eta+1}
\right|.
\end{align*}
Set $x = \Re\eta$ and $y = \Im\eta$.
Substituting $\eta = x + \I y$, we have
\begin{align*}
\frac{1}{\E}\left|
\frac{\eta}{\E^{\eta} - 1}\cdot\frac{\E^{\eta+1} - 1}{\eta+1}
\right|
&=\sqrt{\frac{1}{\E}\cdot\frac{x^2 + y^2}{\cosh(x) - \cos y}
\cdot\frac{\cosh(x+1) - \cos y}{(x+1)^2 + y^2}}
=\sqrt{\frac{1}{\E}\cdot\frac{g(x+1,y)}{g(x,y)}},
\end{align*}
where $g(x,y)$ is defined by~\eqref{eq:g-x-y}.
Here, we used
$|\E^{x+\I y}-1|^2=2\E^x(\cosh x-\cos y)$.
Because $\Re\zeta < 0$ and $|\Im\zeta|<\pi/2$,
we have $\Re(\sinh\zeta)=\sinh(\Re\zeta)\cos(\Im\zeta)<0$, and hence
$|\E^{\pi\sinh\zeta}|< 1$,
which implies $\Re(1 + \E^{\pi\sinh\zeta}) > 0$.
Consequently,
we have $|y| = |\arg(1 + \E^{\pi\sinh\zeta})|\leq \pi/2$.
Therefore, we show
\[
 \sqrt{\frac{1}{\E}\cdot\frac{g(x+1,y)}{g(x,y)}}
\leq \frac{\E^2 + \E + 1}{\E^2 + \E}
\]
for $y\in [-\pi/2, \pi/2]$ and $x\in\mathbb{R}$.
We consider three cases for $x$: (i) $x\leq -1$,
(ii) $-1<x\leq 2$, and (iii) $2<x$.

\begin{enumerate}
 \item[(i)]
 $x\leq -1$. In this case, $g(x,y)\geq g(x+1,y)$ holds
from Proposition~\ref{prop:g-decrease-increase}.
Therefore, we have
\[
  \sqrt{\frac{1}{\E}\cdot\frac{g(x+1,y)}{g(x,y)}}
\leq  \sqrt{\frac{1}{\E}\cdot 1} <1<
\frac{\E^2 + \E + 1}{\E^2 + \E}.
\]
\item[(ii)] $-1<x\leq 2$. In this case,
$g(x+1,y)\leq g(3,y)$
and $g(x,y)\geq g(0,y)$ hold
from Proposition~\ref{prop:g-decrease-increase}.
Furthermore, using
\begin{align*}
 g(3,y)&=\frac{\cosh 3}{9+y^2}-\frac{\cos y}{9+y^2}
\leq \frac{\cosh 3}{9+0^2}-\frac{0}{9+y^2}
= \frac{\cosh 3}{9},\\
\frac{1}{g(0,y)}&=\frac{y^2}{1 - \cos y}
\leq \frac{(\pi/2)^2}{1 - \cos(\pi/2)} = \frac{\pi^2}{4},
\end{align*}
where the second inequality follows because
$y^2/(1-\cos y)=2\{(y/2)/\sin(y/2)\}^2$, and
$t/\sin t$ is increasing for $t\geq0$.
Therefore,
\[
  \sqrt{\frac{1}{\E}\cdot\frac{g(x+1,y)}{g(x,y)}}
\leq  \sqrt{\frac{1}{\E}\cdot\frac{g(3,y)}{g(0,y)}}
\leq  \sqrt{\frac{\pi^2\cosh 3}{36\E}}
\approx1.0077
<1.0989\approx\frac{\E^2 + \E + 1}{\E^2 + \E}.
\]
\item[(iii)] $2 < x$. In this case, using
\[
 \frac{\cosh(x+1)-\cos y}{\cosh(x)-\cos y}
\leq \frac{\cosh(x+1)- 1}{\cosh(x)- 1}
\leq \frac{\cosh(3) - 1}{\cosh(2) - 1},
\]
where the last ratio is monotonically decreasing with respect to
$x>0$, we have
\begin{align*}
  \sqrt{\frac{1}{\E}\cdot\frac{g(x+1,y)}{g(x,y)}}
&=\sqrt{\frac{1}{\E}\cdot\frac{x^2 + y^2}{(1+x)^2 + y^2}\cdot\frac{\cosh(x+1)-\cos y}{\cosh(x)-\cos y}}
\leq\sqrt{\frac{1}{\E}\cdot 1\cdot \frac{\cosh(3) - 1}{\cosh(2) - 1}}
=\frac{\E^2+\E+1}{\E^2+\E}.
\end{align*}
The last equality follows from
$\cosh m-1=(\E^m-1)^2/(2\E^m)$ for $m=2,3$.
\end{enumerate}
This completes the proof.
\end{proof}

\subsection{Useful inequalities on $\domD_d$}
\label{subsec:domDd}

Here, we prepare Lemmas~\ref{lem:DE-func-bound}
and~\ref{lem:important-estimate}.
The latter is needed to prove Lemma~\ref{lem:bound-None},
where~\eqref{eq:important-estimate} is used instead of
\eqref{eq:DE-func-bound-minus}.
Its proof relies on Lemmas~\ref{lem:naive-estimate}
and~\ref{lem:G-minimum}, together with
Propositions~\ref{prop:DE-func-bound}
and~\ref{prop:x1-is-greater-than-x0}--\ref{prop:H-u-nonnegative-improved}.

\begin{lemma}[Okayama et al.~{\cite[Lemma~4.22]{Okayama-et-al}}]
\label{lem:DE-func-bound}
Let $x$ and $y$ be real numbers with $|y|<\pi/2$.
Then, we have
\begin{align}
\frac{1}{|1 + \E^{\pi\sinh(x+\I y)}|}
&\leq \frac{1}{(1 + \E^{\pi\sinh(x)\cos y})\cos((\pi/2)\sin y)},
\label{eq:DE-func-bound-plus}\\
\frac{1}{|1 + \E^{-\pi\sinh(x+\I y)}|}
&\leq \frac{1}{(1 + \E^{-\pi\sinh(x)\cos y})\cos((\pi/2)\sin y)}.
\label{eq:DE-func-bound-minus}
\end{align}
\end{lemma}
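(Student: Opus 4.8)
The plan is to prove both inequalities at once, since they reduce to a single symmetric statement. Write $w = \pi\sinh(x+\I y)$ and split it into real and imaginary parts via $\sinh(x+\I y) = \sinh x\cos y + \I\cosh x\sin y$, so that $w = u + \I v$ with $u = \pi\sinh x\cos y$ and $v = \pi\cosh x\sin y$. The first step is to put every factor into a common form through $1 + \E^{\pm w} = 2\E^{\pm w/2}\cosh(w/2)$ (using $\cosh(-w/2)=\cosh(w/2)$). Since $\cosh(p+\I q) = \cosh p\cos q + \I\sinh p\sin q$ gives $|\cosh(p+\I q)|^2 = \sinh^2 p + \cos^2 q$, I obtain
\[
 |1 + \E^{\pm w}| = 2\E^{\pm u/2}\sqrt{\sinh^2(u/2) + \cos^2(v/2)},\qquad 1 + \E^{\pm u} = 2\E^{\pm u/2}\cosh(u/2).
\]
Because $|y|<\pi/2$ forces $\cos((\pi/2)\sin y)>0$, both~\eqref{eq:DE-func-bound-plus} and~\eqref{eq:DE-func-bound-minus} are, after cancelling the common positive factor $2\E^{\pm u/2}$, equivalent to the \emph{same} inequality $|\cosh(w/2)| \geq \cosh(u/2)\cos((\pi/2)\sin y)$, independent of the sign.

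Squaring this (both sides nonnegative) turns it into $\sinh^2(u/2) + \cos^2(v/2) \geq \cosh^2(u/2)\cos^2((\pi/2)\sin y)$. Substituting $\cosh^2(u/2) = 1 + \sinh^2(u/2)$ and $\cos^2 = 1-\sin^2$, the $\sinh^2(u/2)$ terms combine, and recalling $u/2 = (\pi/2)\sinh x\cos y$, $v/2 = (\pi/2)\cosh x\sin y$, everything collapses to
\[
 \cosh^2\!\big((\pi/2)\sinh x\cos y\big)\,\sin^2\!\big((\pi/2)\sin y\big) \geq \sin^2\!\big((\pi/2)\cosh x\sin y\big).
\]
This reduction is routine; the entire difficulty of the lemma is concentrated in this last inequality.

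Both sides are even in $x$ and even in $y$, so I may assume $x\geq 0$ and $0\leq y<\pi/2$. Setting $a=(\pi/2)\cos y$ and $b=(\pi/2)\sin y$, so that $a^2+b^2=(\pi/2)^2$ with $a>0$ and $0\leq b<\pi/2$, the target becomes $\cosh(a\sinh x)\sin b \geq |\sin(b\cosh x)|$, which holds with equality at $x=0$. \textbf{This inequality is the main obstacle.} The governing mechanism is that $\cosh(a\sinh x)$ grows double exponentially while $|\sin(b\cosh x)|\leq 1$ stays bounded, so the estimate is comfortable for large $x$; the binding configuration is small $x$ with $y$ near $\pi/2$, where $a\downarrow 0$, $\sin b\uparrow 1$, and the two sides meet. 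The constraint $a^2+b^2=(\pi/2)^2$ must be kept in play throughout, since discarding it (e.g.\ bounding $|\sin(b\cosh x)|\leq \cosh x\,\sin b$ and then asking for $\cosh x\leq\cosh(a\sinh x)$) is far too lossy when $a$ is small.

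For the crux I would study $D(x)=\cosh(a\sinh x)\sin b - \sin(b\cosh x)$ as a one-variable function. One checks $D(0)=D'(0)=0$, and a second-order expansion reduces the positivity of $D''(0)=a^2\sin b - b\cos b$ to the elementary inequality $((\pi/2)^2-b^2)\tan b \geq b$ on $(0,\pi/2)$, which exactly captures the near-boundary tightness through the constraint. Globally I would split on the size of $b\cosh x$: for $b\cosh x\leq\pi$ the convexity of $c\mapsto c\sin b - \sin(bc)$ (its second derivative $b^2\sin(bc)$ is nonnegative there) supplies a first bound on $\sin(b\cosh x)$, to be refined by the curvature estimate above; for $b\cosh x>\pi$ the double-exponential growth of $\cosh(a\sinh x)$ already dominates the bounded oscillation. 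I would formalize the whole argument by introducing an auxiliary one-variable function and proving its monotonicity, in the same spirit as Proposition~\ref{prop:g-decrease-increase}. Gluing the local (curvature-governed) and global (growth-governed) estimates into a single clean bound is the delicate technical heart of the proof; once the displayed inequality is secured, both~\eqref{eq:DE-func-bound-plus} and~\eqref{eq:DE-func-bound-minus} follow immediately from the sign-independent reduction established at the outset.
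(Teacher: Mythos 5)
Your reduction is correct and is exactly the standard one: writing $1+\E^{\pm w}=2\E^{\pm w/2}\cosh(w/2)$ with $w=\pi\sinh(x+\I y)$ and using $|\cosh(p+\I q)|^2=\sinh^2 p+\cos^2 q$, both~\eqref{eq:DE-func-bound-plus} and~\eqref{eq:DE-func-bound-minus} collapse to the single inequality
$\cosh^2((\pi/2)\sinh(x)\cos y)\,\sin^2((\pi/2)\sin y)\geq\sin^2((\pi/2)\cosh(x)\sin y)$,
which is precisely $P(x,y)\leq P(0,y)$ for the function $P$ defined in~\eqref{eq:Psi-x-y}, i.e.\ Proposition~\ref{prop:DE-func-bound}. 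Note that the paper does not reprove this lemma at all: both the lemma and that proposition are imported from Okayama et al.~\cite[Lemma~4.22]{Okayama-et-al}, so your opening computation faithfully reconstructs how the cited proof is organized.

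The gap is that you never actually prove the crux. Everything after ``this inequality is the main obstacle'' is written in the conditional mood (``I would study'', ``I would split'', ``I would formalize'') and stops short of an argument. The second-order computation $D(0)=D'(0)=0$ and $D''(0)=a^2\sin b-b\cos b\geq 0$ (equivalent to $((\pi/2)^2-b^2)\tan b\geq b$) only controls $D$ in a neighbourhood of $x=0$; it says nothing about $D(x)\geq 0$ globally. The proposed gluing of a convexity bound for $b\cosh x\leq\pi$ with a growth bound for $b\cosh x>\pi$ is not carried out, and it is not obviously routine: in the regime $y\to\pi/2$ one has $a\to 0$, so $\cosh(a\sinh x)$ grows slowly while $|\sin(b\cosh x)|$ repeatedly returns to $1$, and ``double-exponential growth dominates'' must be made quantitative using the constraint $a^2+b^2=(\pi/2)^2$ that you rightly insist on keeping. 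As it stands the proposal is a correct and clean reduction plus an unproven key inequality; to close it you must either supply a complete proof of $P(x,y)\leq P(0,y)$ or cite it, as the paper does, from~\cite{Okayama-et-al}.
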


\begin{proposition}[Okayama et al.~{\cite[shown in the proof of Lemma~4.22]{Okayama-et-al}}]
\label{prop:DE-func-bound}
Let $x$ and $y$ be real numbers with $|y|<\pi/2$,
and let $P(x,y)$ be a function defined by
\begin{equation}
 P(x,y)
 = \frac{\sin^2[(\pi/2)\cosh(x)\sin y]}{\cosh^2[(\pi/2)\sinh(x)\cos y]}.
\label{eq:Psi-x-y}
\end{equation}
Then, we have $P(x,y)\leq P(0,y)$.
\end{proposition}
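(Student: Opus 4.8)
The plan is to reduce the claim to an elementary one-dimensional comparison between $\sin$ and $\cosh$, and then to verify two delicate transcendental inequalities. Since $P(x,y)$ is even in both $x$ and $y$, I would assume without loss of generality that $x\ge 0$ and $0\le y<\pi/2$, and abbreviate $p=(\pi/2)\sin y$ and $q=(\pi/2)\cos y$, so that $0\le p<\pi/2$, $q>0$, and $p^2+q^2=(\pi/2)^2$. Writing $A=p\cosh x$ and $B=q\sinh x$, the target $P(x,y)\le P(0,y)$ is exactly
\[
|\sin A|\le \sin(p)\cosh(B),
\]
because $P(0,y)=\sin^2 p$. The case $p=0$ (i.e.\ $y=0$) is trivial, both sides vanishing, so I would henceforth assume $0<p<\pi/2$, whence $\sin p>0$.

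Next I would split the range $x\ge 0$ according to the size of $A=p\cosh x\ (\ge p)$. When $A\ge \pi/2$, the right-hand side is already large enough: since $B=q\sinh x$ is increasing and $A=\pi/2$ forces $\cosh x=\pi/(2p)$, hence $\sinh x=q/p$ (using $p^2+q^2=(\pi/2)^2$), one has $B\ge q^2/p$ throughout this range, so $\cosh B\ge \cosh(q^2/p)$. It then suffices to establish the single-variable inequality
\[
\cosh\!\left(\frac{q^2}{p}\right)\ge \frac{1}{\sin p}\qquad(\star),
\]
since this yields $\sin(p)\cosh B\ge 1\ge |\sin A|$ (the sign of $\sin A$ being irrelevant here). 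When instead $p\le A\le \pi/2$, a bounded $x$-interval on which $\sin A\ge 0$, I would invoke the concavity of $\sin$ on $[0,\pi]$: the tangent line at $p$ gives $\sin A\le \sin p+(A-p)\cos p$. Combining the elementary estimates $\cosh x-1\le \tfrac12\sinh^2 x$ and $\cosh B\ge 1+\tfrac12 B^2$ then reduces the desired bound $\sin p+(A-p)\cos p\le \sin(p)\cosh B$ to the single-variable inequality
\[
q^2\sin p\ge p\cos p\qquad(\star\star),
\]
that is, $q^2\ge p\cot p$. These two ranges cover all $x\ge 0$, so with $(\star)$ and $(\star\star)$ in hand the two-region argument delivers $|\sin A|\le\sin(p)\cosh B$ and hence $P(x,y)\le P(0,y)$.

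The main obstacle I anticipate is verifying $(\star)$ and $(\star\star)$ uniformly on $(0,\pi/2)$, because both become equalities in the limit $p\to\pi/2$ (and $(\star\star)$ also degenerates as $p\to 0$), so the margins are thin near the endpoints and a crude bound will not suffice there. For $(\star\star)$ I would study $\rho(p)=(\pi^2/4-p^2)\sin p-p\cos p$, noting $\rho(0)=\rho(\pi/2)=0$, $\rho'(0)=\pi^2/4-1>0$, and $\rho'(\pi/2)=-\pi/2<0$, and then establish that $\rho$ is unimodal by showing $\rho'$ changes sign exactly once on $(0,\pi/2)$, via the monotonicity of a suitable auxiliary expression obtained through repeated differentiation, in the spirit of Proposition~\ref{prop:g-decrease-increase}. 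For $(\star)$ I would substitute $q^2=\pi^2/4-p^2$ and show $\sin(p)\cosh((\pi^2/4-p^2)/p)\ge 1$, using the convexity bound $\cosh u\ge 1+u^2/2$ to reduce the delicate region near $p=\pi/2$ to an algebraic inequality, while for small $p$ the $\cosh$ factor grows without bound and a coarse estimate closes the gap. These one-variable inequalities are elementary but require care at the endpoints; once they are secured, the two-region decomposition above completes the proof.
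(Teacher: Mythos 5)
The paper does not actually prove this proposition: it is imported verbatim from Okayama et al.\ (cited as ``shown in the proof of Lemma~4.22''), so there is no in-paper argument to compare yours against. Judged on its own, your proposal is sound. The reduction to $|\sin A|\le \sin(p)\cosh(B)$ with $A=p\cosh x$, $B=q\sinh x$, $p^2+q^2=(\pi/2)^2$ is correct (evenness in $x$ and $y$ justifies the normalization, and $P(0,y)=\sin^2 p$), the two-region split covers all of $x\ge 0$, and both flagged one-variable inequalities are true and close by exactly the means you sketch. For $(\star)$, the bound $\cosh u\ge 1+u^2/2$ together with $1-\sin p=1-\cos(\tfrac{\pi}{2}-p)\le \tfrac12(\tfrac{\pi}{2}-p)^2$ reduces the claim to $\sin(p)\,(\tfrac{\pi}{2}+p)^2\ge p^2$, which follows from $\sin p\ge 2p/\pi$ — so no delicate endpoint analysis is actually needed there. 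For $(\star\star)$, the inequality $q^2\sin p\ge p\cos p$ is equivalent to $\tan(p)/p\ge 4/(\pi^2-4p^2)$, a weak form of the Becker--Stark inequality; alternatively, for $p\le\sqrt{\pi^2/4-1}$ it follows at once from $\tan p\ge p$, and on the remaining interval your unimodality argument for $\rho(p)=(\pi^2/4-p^2)\sin p-p\cos p$ goes through because $\rho'(p)=-p\sin p+(\pi^2/4-1-p^2)\cos p$ changes sign exactly once (the equation $\tan p=(\pi^2/4-1-p^2)/p$ pits an increasing function against a decreasing one). The only caveat is that these verifications are sketched rather than executed, but each sketch does terminate in a checkable elementary inequality, so there is no genuine gap.
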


\begin{lemma}
\label{lem:naive-estimate}
Let $L$ be a positive constant,
and let $x$ and $y$ be real numbers with
$|y|<\arcsin[(2/\pi)\arcsin(\E^{-L/2})]$.
Then, we have
\begin{equation}
 \frac{1}{|1 + \E^{-L-\pi\sinh(x+\I y)}|}
\leq
\frac{1}{(1+\E^{-L-\pi\sinh(x)\cos y})\sqrt{1 - \E^{L}\sin^2[(\pi/2)\sin y]}}.
\label{eq:naive-estimate}
\end{equation}
\end{lemma}
\begin{proof}
Let $G(x,y)$ be a function defined by
\begin{equation}
 G(x,y) = 1 - \frac{\sin^2[(\pi/2)\cosh(x)\sin y]}{\cosh^2[(L/2)+(\pi/2)\sinh(x)\cos y]}.
\label{eq:G-x-y}
\end{equation}
To rewrite the left-hand side of the desired inequality, set
$a=\E^{-L-\pi\sinh(x)\cos y}$ and
$\theta=\pi\cosh(x)\sin y$. Then, we have
\[
 |1+a\E^{-\I\theta}|^2
 =(1+a)^2-4a\sin^2(\theta/2)
 =(1+a)^2\left\{1-
 \frac{\sin^2(\theta/2)}
 {\cosh^2[(L/2)+(\pi/2)\sinh(x)\cos y]}\right\}.
\]
Here, we used
$4a/(1+a)^2=1/\cosh^2[(L/2)+(\pi/2)\sinh(x)\cos y]$.
Therefore,
\begin{equation}
  \frac{1}{|1 + \E^{-L-\pi\sinh(x+\I y)}|}
=\frac{1}{(1+\E^{-L-\pi\sinh(x)\cos y})\sqrt{G(x,y)}}.
\label{eq:rewrite-with-G-x-y}
\end{equation}
Because $\E^{L/2}\cosh((L/2) + t)\geq \cosh t$ holds for $t\in\mathbb{R}$,
using $P(x,y)$ in~\eqref{eq:Psi-x-y}, we have
\[
 G(x,y)\geq 1
 - \frac{\sin^2[(\pi/2)\cosh(x)\sin y]}{\E^{-L}\cosh^2[(\pi/2)\sinh(x)\cos y]}
= 1 - \E^{L}P(x,y)
\geq 1 - \E^{L}P(0,y)
= 1 - \E^{L}\sin^2[(\pi/2)\sin y],
\]
which yields the desired inequality.
\end{proof}

\begin{proposition}
\label{prop:x1-is-greater-than-x0}
Let $L$ be a positive constant, and let $d_L$ be the constant
defined by~\eqref{eq:d_L}.
For each $y\in(0,d_L]$,
let $x_0$ and $x_1$ be positive numbers defined by
\begin{align}
 x_0 &= \arsinh\left(\frac{L}{\pi\cos y}\right), \label{eq:x_0}\\
 x_1 &= \log\left(\frac{1+\cos y}{\sin y}\right). \label{eq:x_1}
\end{align}
Then, $x_1\geq x_0$ holds.
\end{proposition}
\begin{proof}
If we show
\begin{equation}
 \frac{\cos y}{\sin y}\geq \frac{L}{\pi\cos y},
\label{eq:target-x1-x0}
\end{equation}
then we obtain the conclusion as
\[
 x_1 - x_0 = \arsinh\left(\frac{\cos y}{\sin y}\right)
- \arsinh\left(\frac{L}{\pi\cos y}\right) \geq 0.
\]
From
\[
 d_L = \arccos\left(\sqrt{\frac{2}{1+\sqrt{1 + (2\pi/L)^2}}}\right)
= \arcsin\left(\sqrt{\left(\frac{L}{2\pi}\right)^2 + 1} - \frac{L}{2\pi}\right),
\]
for $y\in [0, d_L]$, we have
\[
 \sin^2 y + \frac{L}{\pi}\sin y - 1 \leq 0.
\]
This inequality can be reduced to~\eqref{eq:target-x1-x0},
which is to be demonstrated.
\end{proof}

\begin{proposition}
\label{prop:cosh-x1-x0-decrease}
Let $L$ be a positive constant, let $d_L$ be a positive constant
defined by~\eqref{eq:d_L}.
For each $y\in(0,d_L]$,
let $x_0$ and $x_1$ be positive numbers defined by~\eqref{eq:x_0}
and~\eqref{eq:x_1}, respectively.
Then, $\cosh(x_1 - x_0)$ monotonically decreases
with respect to $y$.
\end{proposition}
\begin{proof}
Calculating the derivative of $\cosh(x_1 - x_0)$ yields
\begin{align*}
   \frac{\mathrm{d}}{\mathrm{d}y}\cosh(x_1 - x_0)
= \sinh(x_1 - x_0)\cdot
\frac{\mathrm{d}}{\mathrm{d}y}\left(x_1 - x_0\right)
=\sinh(x_1 - x_0)\cdot
\left(-\frac{1}{\sin y}
      - \frac{\tan(y) L/(\pi\cos y)}{\sqrt{1 + \{L/(\pi\cos y)\}^2}}
\right).
\end{align*}
From proposition~\ref{prop:x1-is-greater-than-x0},
$\sinh(x_1 - x_0)\geq 0$ holds for $y\in(0, d_L]$,
from which we obtain the conclusion.
\end{proof}

\begin{proposition}
\label{prop:H-0-nonnegative}
Let $L$ be a positive constant, let $d_L$ be a positive constant
defined by~\eqref{eq:d_L}.
For each $y\in(0,d_L)$,
let $x_0$ and $x_1$ be positive numbers defined by~\eqref{eq:x_0}
and~\eqref{eq:x_1}, respectively.
Then, the following inequality holds:
\begin{equation}
 \frac{\tan c_y}{c_y}
\geq \frac{1}{\left(\frac{\pi}{2}\cos y\right)^2},
\label{eq:tan-x-bound}
\end{equation}
where $c_y$ is defined by
\begin{equation}
 c_y = \frac{\pi}{2}\cdot\frac{1}{\cosh(x_1 - x_0)}.
\label{eq:c_y}
\end{equation}
\end{proposition}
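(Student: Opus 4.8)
The plan is to discard the transcendental $\tan$ in favour of a rational lower bound and thereby reduce \eqref{eq:tan-x-bound} to an algebraic inequality. Since $\cosh(x_1-x_0)\geq 1$, the definition \eqref{eq:c_y} gives $c_y\in(0,\pi/2]$; when $c_y=\pi/2$ the left-hand side of \eqref{eq:tan-x-bound} is $+\infty$ and there is nothing to prove, so I may assume $c_y\in(0,\pi/2)$. On this range the Mittag--Leffler expansion $\tan c=\sum_{k=0}^{\infty}\frac{8c}{(2k+1)^2\pi^2-4c^2}$ consists of positive terms, so retaining the $k=0$ term yields $\tan c\geq\frac{8c}{\pi^2-4c^2}$. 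Dividing by $c_y$, it then suffices to prove $\frac{8}{\pi^2-4c_y^2}\geq\frac{1}{((\pi/2)\cos y)^2}$, which rearranges to the purely algebraic claim $4c_y^2\geq\pi^2(1-2\cos^2 y)$.

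The crux is a clean lower bound $c_y\geq(\pi/2)\sin y$, equivalently $\cosh(x_1-x_0)\leq 1/\sin y$. To get it I would use the closed form already obtained in the proof of Proposition~\ref{prop:cosh-x1-x0-decrease}, namely $\cosh(x_1-x_0)=\frac{1}{\sin y}\{\sqrt{1+(L/(\pi\cos y))^2}-L/\pi\}$. Thus $\cosh(x_1-x_0)\leq 1/\sin y$ is equivalent to $\sqrt{1+(L/(\pi\cos y))^2}\leq 1+L/\pi$, and squaring and cancelling the common factor $L^2/\pi^2$ collapses this to the single condition $\tan^2 y\leq 2\pi/L$.

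It then remains to verify $\tan^2 y\leq 2\pi/L$ on $[0,d_L)$. Because $\tan^2$ increases on $[0,\pi/2)$ and $d_L\in(0,\pi/2)$, it is enough to check $y=d_L$. Setting $s=\sqrt{1+(2\pi/L)^2}$, the definition \eqref{eq:d_L} gives $\cos^2 d_L=2/(1+s)$, hence $\tan^2 d_L=\sec^2 d_L-1=(s-1)/2$; the needed bound $(s-1)/2\leq 2\pi/L$ is equivalent to $s\leq 1+4\pi/L$, which holds by squaring since $(1+4\pi/L)^2-s^2=8\pi/L+12\pi^2/L^2>0$. Feeding $c_y\geq(\pi/2)\sin y$ back into the algebraic claim finishes the argument: $4c_y^2\geq\pi^2\sin^2 y=\pi^2(1-\cos^2 y)\geq\pi^2(1-2\cos^2 y)$. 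The endpoint $y=0$ is handled by continuity, where $c_0=0$ and $\lim_{c\to 0}(\tan c)/c=1\geq 4/\pi^2$.

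I expect the main obstacle to be the middle step---securing the lower bound $c_y\geq(\pi/2)\sin y$ with the correct constant---because this is exactly where the precise form \eqref{eq:d_L} of the threshold is consumed: the constant $2\pi/L$ in the equivalent condition $\tan^2 y\leq 2\pi/L$ is matched right up to the endpoint by $\tan^2 d_L=(s-1)/2$. The refinement in the $\tan$ bound is also essential: replacing it by the naive $\tan c\geq c$ only gives $\frac{\tan c_y}{c_y}\geq 1$, which already fails once $\cos^2 y<4/\pi^2$, i.e. for $y>\arccos(2/\pi)\approx 0.88$, a range that $[0,d_L)$ can exceed (for the value $L=\log(\E/(\E-1))$ used later one has $d_L\approx 1.19$). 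Keeping the quadratic correction $4c_y^2$ is what makes the estimate go through on the whole interval.
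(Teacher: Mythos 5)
Your proof is correct, and it takes a genuinely different route from the paper's. The paper splits into cases: for $L\geq 4/\sqrt{\pi^2-4}$, or for $y\leq\arccos(2/\pi)$, it gets away with the crude bound $\tan c_y/c_y\geq 1$ (valid exactly when $\cos y\geq 2/\pi$); in the remaining hard case $\arccos(2/\pi)<y\leq d_L$ it invokes the monotonicity statement of Proposition~\ref{prop:cosh-x1-x0-decrease} to obtain $\cosh(x_1-x_0)\leq\frac{\pi^2}{\pi^2-4}\sin y$, substitutes $v=\gamma/\sin y$, and finishes with a calculus argument showing that $\Phi(v)=\tan v - v^3/\{(\pi/2)^2(v^2-\gamma^2)\}$ is increasing and positive on $[\sqrt{\pi^2-4}/2,\pi/2)$. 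You instead give a single uniform argument: the one-term truncation $\tan c\geq 8c/(\pi^2-4c^2)$ of the partial-fraction expansion reduces~\eqref{eq:tan-x-bound} to $4c_y^2\geq\pi^2(1-2\cos^2 y)$, and the clean bound $c_y\geq(\pi/2)\sin y$ (equivalently $\cosh(x_1-x_0)\leq 1/\sin y$) then closes the gap with a factor of two to spare. Your derivation of that bound reuses only the closed form $\cosh(x_1-x_0)=\frac{1}{\sin y}\{\sqrt{1+(L/(\pi\cos y))^2}-L/\pi\}$ (computed in the proof of Proposition~\ref{prop:cosh-x1-x0-decrease}), not its monotonicity conclusion, and the resulting condition $\tan^2 y\leq 2\pi/L$ is verified against~\eqref{eq:d_L} by the elementary computation $\tan^2 d_L=(s-1)/2\leq 2\pi/L$ with $s=\sqrt{1+(2\pi/L)^2}$; all of these steps check out (note, incidentally, that $d_L$ is precisely the value where $x_1=x_0$, i.e.\ $c_{d_L}=\pi/2$, which is why the endpoint degeneracy you flag is real and must be dismissed as you do). What your approach buys is the elimination of the case analysis and of the derivative computation for $\Phi$; what it costs is reliance on the Mittag--Leffler expansion of $\tan$, a classical but heavier input than the paper's purely elementary estimates. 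Your diagnosis of why the naive bound $\tan c/c\geq 1$ cannot cover all of $[0,d_L)$ is also accurate.
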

\begin{proof}
Since $0\leq x_1-x_0\leq x_1$ by
Proposition~\ref{prop:x1-is-greater-than-x0},
\[
 \cosh(x_1-x_0)\leq\cosh x_1=\frac{1}{\sin y}.
\]
Therefore, $c_y\geq (\pi/2)\sin y$. Noting that $(\tan v)/v$ is
increasing on $(0,\pi/2)$, we have
\[
 \frac{\tan c_y}{c_y}
 \geq\frac{\tan((\pi/2)\sin y)}{(\pi/2)\sin y}.
\]
If $y\leq\arccos(2/\pi)$, then
$1/\{(\pi/2)\cos y\}^2\leq1\leq\tan c_y/c_y$.
It remains to consider $y>\arccos(2/\pi)$.
Set $z=(\pi/2)\sin y$ and $\delta=(\pi/2)-z$. Then,
\[
 0<\delta<\frac{\pi}{2}-\frac{\sqrt{\pi^2-4}}{2}<\frac{\pi}{4}.
\]
The last inequality is equivalent to $\pi^2>16/3$.
Since $1/\cos^2 v\leq2$ for $0\leq v\leq\delta$,
$\tan\delta\leq2\delta$. Consequently,
\[
 \frac{\tan z}{z}
 =\frac{1}{z\tan\delta}
 \geq\frac{1}{2z\delta}
 \geq\frac{1}{\delta((\pi/2)+z)}
 =\frac{1}{(\pi/2)^2-z^2}
 =\frac{1}{\{(\pi/2)\cos y\}^2},
\]
which proves~\eqref{eq:tan-x-bound}.
\end{proof}
\begin{proposition}
\label{prop:cosh-x1-cosh-x0-cos-cy-decrease}
Let $L$ be a positive constant,
let $d_L$ be a positive constant defined by~\eqref{eq:d_L}.
For each $y\in(0,d_L)$, let $x_0$ and
$x_1$ be the positive numbers defined by~\eqref{eq:x_0} and
\eqref{eq:x_1}, respectively, and let $c_y$ be the positive number
defined by~\eqref{eq:c_y}.
Then,
\[
\left\{\cosh(x_1) + \cosh(x_0)\right\}\cos c_y
\]
is strictly decreasing with respect to $y$.
\end{proposition}
\begin{proof}
By Proposition~\ref{prop:cosh-x1-x0-decrease},
$\cosh(x_1-x_0)$ strictly decreases with respect to $y$.
Hence, $c_y$ strictly increases and $\cos c_y$ strictly decreases.
It remains to show that $\left\{\cosh(x_1) + \cosh(x_0)\right\}$ strictly decreases.
Put $\lambda=L/\pi$.  From the definitions of $x_0$ and $x_1$,
\[
 \cosh x_1=\frac{1}{\sin y},
 \qquad
 \cosh x_0=\frac{\sqrt{\cos^2y+\lambda^2}}{\cos y}.
\]
Therefore,
\[
 \frac{\D}{\D y}\cosh x_1=-\frac{\cos y}{\sin^2y},
 \qquad
 \frac{\D}{\D y}\cosh x_0
 =\frac{\lambda^2\sin y}
 {\cos^2y\sqrt{\cos^2y+\lambda^2}}.
\]
Since $y<d_L$, we have
$\lambda \tan y < \lambda\tan d_L = \cos d_L < \cos y$, which implies
\[
 \lambda\sin y<\cos^2y.
\]
Consequently,
\[
 \lambda^2\sin^3y
 <\cos^4y\sin y
 =\cos^3 y \cdot\cos y \cdot\sin y
 \leq\cos^3 y\cdot\sqrt{\cos^2 y+\lambda^2}\cdot 1,
\]
which shows that
\[
 \frac{\D}{\D y}\left\{\cosh(x_1) + \cosh(x_0)\right\}
 =\frac{-\cos^3 y\sqrt{\cos^2 y + \lambda^2} + \lambda^2\sin^3 y}{\sin^2 y\cos^2 y\sqrt{\cos^2 y + \lambda^2}}
 <0.
\]
Thus, both positive factors
$\left\{\cosh(x_1) + \cosh(x_0)\right\}$ and $\cos c_y$ strictly decrease with respect
to $y$, and so does their product.
\end{proof}
\begin{proposition}
\label{prop:H-u-nonnegative-improved}
Let $L$ be a positive constant, and let $d_L$ be the positive constant
defined by~\eqref{eq:d_L}.  For each $y\in(0,d_L)$, let $x_0$ and
$x_1$ be the positive numbers defined by~\eqref{eq:x_0} and
\eqref{eq:x_1}, respectively, and let $c_y$ be the positive number
defined by~\eqref{eq:c_y}.
Let $H(u)$ be a function defined by
\begin{equation}
 H(u) =
\frac{\tan\left(c_y\sqrt{1+u^2}\right)}{c_y}
-\left[
\frac{1}{\left(\frac{\pi}{2}\cos y\right)^2}
+\frac{u^2}{3}\left\{\cosh(x_0) - \frac{u}{2}\sinh(x_0)\right\}^2
\right].
\label{eq:H-u}
\end{equation}
If $y$ satisfies
\begin{equation}
 \left(\cosh x_1+\cosh x_0\right) \cos c_y
 \leq\sqrt{6},
 \label{eq:improved-H-condition}
\end{equation}
then $H(u)\geq0$ holds for all
$u\in(-\sinh(x_1-x_0),0]$.
\end{proposition}
\begin{proof}
We use the inequality
\begin{equation}
 \tan\left(p\sqrt{1+u^2}\right)\geq \tan(p) + \frac{p}{2\cos^2 p} u^2,
\label{eq:tan-estimate}
\end{equation}
which holds when $p\sqrt{1+u^2}<\pi/2$ and
$2p\tan p\geq1$.
Indeed, let $f(s)=\tan(p\sqrt{1+s})$.  Its derivative
\[
 f'(s)=\frac{p}{2\sqrt{1+s}\cos^2(p\sqrt{1+s})},
\]
is nondecreasing for $s\geq0$, because
\[
 f''(s)=
 \frac{p\{2p\sqrt{1+s}\tan(p\sqrt{1+s})-1\}}
 {4(1+s)^{3/2}\cos^2(p\sqrt{1+s})}
 \geq \frac{p\{2p\tan(p)-1\}}
 {4(1+s)^{3/2}\cos^2(p\sqrt{1+s})}
 \geq0.
\]
Thus, $f$ is convex, and the tangent-line inequality gives
$f(s)\geq f(0)+f'(0)s$.  Setting $s=u^2$ yields
~\eqref{eq:tan-estimate}.

By Proposition~\ref{prop:x1-is-greater-than-x0}, we have
$0\leq x_1-x_0\leq x_1$ and $x_0\geq0$.  Therefore,
$\cosh x_1\geq\cosh(x_1-x_0)$ and $\cosh x_0\geq1$,
which imply
$\cosh(x_1 - x_0) + 1 \leq \cosh(x_1)+\cosh(x_0)$.
Hence,~\eqref{eq:improved-H-condition} gives
\[
 \left\{\cosh(x_1-x_0)+1\right\}
 \cos\left(\frac{\pi}{2\cosh(x_1-x_0)}\right)
 \leq\sqrt{6}.
\]
The function
\[
 \eta(C)=(C+1)\cos\left(\frac{\pi}{2C}\right)
\]
is strictly increasing for $C\geq1$, because its derivative is
\[
 \eta'(C)=\cos\left(\frac{\pi}{2C}\right)
 +\frac{\pi(C+1)}{2C^2}
 \sin\left(\frac{\pi}{2C}\right)>0.
\]
Moreover, using $\cos t\geq1-t^2/2$ and $\pi>3$, we have
\[
 \eta\left(\frac{3\pi}{4}\right)
 =\left(1+\frac{3\pi}{4}\right)\cos\left(\frac{2}{3}\right)
 > \left(1+\frac{3\cdot 3}{4}\right)
 \left\{1 - \frac{1}{2}\left(\frac{2}{3}\right)^2\right\}
 =\frac{91}{36}>\sqrt{6}.
\]
Therefore, $\cosh(x_1-x_0)<3\pi/4$.
Put $p=c_y$.  Then
\[
 p=\frac{\pi}{2\cosh(x_1-x_0)}>\frac{2}{3}.
\]
Since $\tan p\geq p+p^3/3$ for $0<p<\pi/2$, we obtain
\[
 2p\tan p
 \geq2p^2+\frac{2}{3}p^4
 >\frac{8}{9}+\frac{32}{243}
 =\frac{248}{243}>1.
\]
Moreover, for $u\in(-\sinh(x_1-x_0),0]$, we have
\[
 \sqrt{1+u^2}<\cosh(x_1-x_0),
\]
and hence $p\sqrt{1+u^2}<\pi/2$.  Thus,
inequality~\eqref{eq:tan-estimate} gives
\[
 \tan\left(p\sqrt{1+u^2}\right)
 \geq \tan p+\frac{p}{2\cos^2p}u^2.
\]
It follows that
\begin{align*}
 H(u)
&\geq
 \left\{
  \frac{\tan c_y}{c_y}
  -\frac{1}{\left((\pi/2)\cos y\right)^2}
 \right\}
+
 \left[
  \frac{1}{2\cos^2c_y}
  -\frac{1}{3}
   \left\{\cosh x_0-\frac{u}{2}\sinh x_0\right\}^2
 \right]u^2.
\end{align*}
The first term on the right-hand side is nonnegative by
Proposition~\ref{prop:H-0-nonnegative}.

Next, we show
the coefficient of $u^2$ in the lower bound for $H(u)$ is nonnegative. 
For $u\in(-\sinh(x_1-x_0),0]$, we have
\begin{align*}
 \cosh x_0-\frac{u}{2}\sinh x_0
 &\leq \cosh x_0
 +\frac{1}{2}\sinh(x_1-x_0)\sinh x_0\\
 &=\cosh x_0
 +\frac{1}{2}\left\{
  \cosh(x_1) - \cosh(x_1 - x_0)\cosh(x_0)
  \right\}\\
 &=\frac{1}{2}\left\{
  \cosh(x_1) + (2 - \cosh(x_1 - x_0))\cosh(x_0)
  \right\}\\
 &\leq \frac{1}{2}\left\{\cosh(x_1) + 1\cdot\cosh(x_0)\right\}.
\end{align*}
Combining this estimate with~\eqref{eq:improved-H-condition},
we obtain
\[
 2\cos c_y
 \left\{\cosh x_0-\frac{u}{2}\sinh x_0\right\}
 \leq
 \cos c_y
 \left\{\cosh x_1+\cosh x_0\right\}
 \leq
 \sqrt{6}.
\]
Squaring this inequality gives
\[
 \frac{1}{3}
 \left\{\cosh x_0-\frac{u}{2}\sinh x_0\right\}^2
 \leq\frac{1}{2\cos^2c_y}.
\]
Thus, $H(u)\geq0$ for all
$u\in(-\sinh(x_1-x_0),0]$.
\end{proof}

\begin{lemma}
\label{lem:G-minimum}
Let $L$ be a positive constant, and let $d_L$ be a positive constant
defined by~\eqref{eq:d_L}.
For each $y\in(0,d_L)$,
let $x_0$, $x_1$, and $c_y$ be positive numbers defined by~\eqref{eq:x_0}, \eqref{eq:x_1}, and~\eqref{eq:c_y}, respectively.
Then, for all real numbers $x$ and $y$
satisfying $0<|y|<d_L$ and~\eqref{eq:improved-H-condition}
with $y$ replaced by $|y|$, we have
\begin{equation}
 \frac{1}{|1 + \E^{-L-\pi\sinh(x+\I y)}|}
\leq
 \frac{1}{(1 + \E^{-L-\pi\sinh(x)\cos y})\cos c_{|y|}}.
\label{eq:G-minimum}
\end{equation}
\end{lemma}
\begin{proof}
First,
using the function
$G(x,y)$ defined by~\eqref{eq:G-x-y},
we have~\eqref{eq:rewrite-with-G-x-y}.
Because $G(x,y)$ is an even function with respect to $y$,
we assume $0< y< d_L$ without loss of generality.
Here, we note that $G(x,y)$ is rewritten as
\[
  G(x,y)
 = 1 - \frac{\sin^2[(\pi/2)\cosh(x)\sin y]}{\cosh^2\left[(\pi/2)(\sinh(x)+\sinh(x_0))\cos y\right]},
\]
because $(\pi/2)(\sinh(x)+\sinh(x_0))\cos y=(L/2)+(\pi/2)\sinh(x)\cos y$.
Set a function $g(x,y)$ as
\[
 g(x,y) = 1 -
 \frac{\sin^2\left[\frac{\pi}{2}\cdot\frac{\cosh(x + x_0)}{\cosh(x_1 - x_0)}\right]}{\cosh^2\left[(\pi/2)(\sinh(x)+\sinh(x_0))\cos y\right]}
= 1 -
 \frac{\sin^2\left[c_y \cosh(x + x_0)\right]}{\cosh^2\left[(\pi/2)(\sinh(x)+\sinh(x_0))\cos y\right]}.
\]
Because $g(-x_0,y) = 1 - \sin^2 c_y = \cos^2 c_y$,
the desired inequality is obtained by showing
\[
 G(x,y)\geq g(-x_0, y).
\]
Indeed, this gives $\sqrt{G(x,y)}\geq\cos c_y$, which yields
\eqref{eq:G-minimum} through~\eqref{eq:rewrite-with-G-x-y}.
For the purpose, we consider the following four cases:
(i) $-x_1 \leq x\leq -x_0$,
(ii) $|x| < x_0$, (iii) $x<-x_1$, and (iv) $x_0\leq x$.
\begin{enumerate}
 \item[(i)] $-x_1 \leq x\leq -x_0$. In this case, because
\[
  \frac{\mathrm{d}}{\mathrm{d}x}
  \left\{\frac{\cosh x}{\cosh(x+x_0)}\right\}
  =-\frac{\sinh(x_0)}{\cosh^2(x+x_0)}<0,
\]
we have
\begin{equation}
 \frac{\cosh(x)}{\cosh(x+x_0)} \leq
\frac{\cosh(-x_1)}{\cosh(-x_1+x_0)} = \frac{\cosh(x_1)}{\cosh(x_1-x_0)}
=\frac{1}{\sin(y)\cosh(x_1-x_0)}.
\label{eq:cosh-bound}
\end{equation}
Therefore, $G(x,y)$ is estimated as
\[
 G(x,y)\geq g(x,y).
\]
In fact,~\eqref{eq:cosh-bound} gives
\[
 0\leq\frac{\pi}{2}\cosh(x)\sin y
 \leq\frac{\pi}{2}
 \frac{\cosh(x+x_0)}{\cosh(x_1-x_0)}\leq\frac{\pi}{2},
\]
so the inequality follows from the monotonicity of $\sin$ on
$[0,\pi/2]$.
The desired inequality is obtained if we show $g(x,y)\geq g(-x_0,y)$.
To this end, we show $\frac{\uppartial}{\uppartial x}g(x,y)\leq 0$
for $x\in (-x_1, -x_0)$.
Using the bound
\[
 -\sinh(x+x_0)
=-\sinh(x)\cosh(x_0)-\cosh(x)\sinh(x_0)
\leq -\sinh(x)\cosh(x)-\cosh(x)\sinh(x_0)
\]
and
\[
\left(1 + \frac{1}{3}t^2\right)
 \tanh t\geq t
\]
for $t\geq 0$,
we have
\begin{align*}
\frac{\uppartial}{\uppartial x}g(x,y)
&=\frac{2\sin^2\left[c_y \cosh(x+x_0)\right]}
{\cosh^2\left[(\pi/2)(\sinh(x)+\sinh(x_0))\cos y)\right]}\\
&\quad\cdot
\left\{
\frac{-\sinh(x+x_0) c_y}{\tan\left[c_y\cosh(x+x_0)\right]}
+\frac{\pi}{2}\cosh(x)\cos(y)
\tanh\left[\frac{\pi}{2}(\sinh(x)+\sinh(x_0))\cos y\right]
\right\}\\
&\leq
\frac{2\sin^2\left[c_y\cosh(x+x_0)\right](-\sinh(x)-\sinh(x_0))\cosh x}
{\cosh^2\left[(\pi/2)(\sinh(x)+\sinh(x_0))\cos y)\right]}\\
&\quad\cdot
\left\{
\frac{c_y}{\tan\left[c_y\cosh(x+x_0)\right]}
-\left(\frac{\pi}{2}\cos y\right)^2
\frac{\tanh\left[\frac{\pi}{2}(-\sinh(x)-\sinh(x_0))\cos y\right]}
{\frac{\pi}{2}( - \sinh(x) - \sinh(x_0))\cos y}
\right\}\\
&\leq
\frac{2\sin^2\left[c_y\cosh(x+x_0)\right](-\sinh(x)-\sinh(x_0))\cosh x}
{\cosh^2\left[(\pi/2)(\sinh(x)+\sinh(x_0))\cos y)\right]}\\
&\quad\cdot
\left\{
\frac{c_y}{\tan\left[c_y\cosh(x+x_0)\right]}
-\left(\frac{\pi}{2}\cos y\right)^2
\frac{1}{1 + \frac{1}{3}\left[\frac{\pi}{2}(-\sinh(x)-\sinh(x_0))\cos y\right]^2}
\right\}.
\end{align*}
To prove $\frac{\uppartial}{\uppartial x}g(x,y)\leq 0$,
we show
\[
 \frac{c_y}{\tan\left[c_y\cosh(x+x_0)\right]}
-\left(\frac{\pi}{2}\cos y\right)^2
\frac{1}{1 + \frac{1}{3}\left[\frac{\pi}{2}(-\sinh(x)-\sinh(x_0))\cos y\right]^2}\leq 0,
\]
which is equivalent to
\[
 \frac{\tan \left[c_y\cosh(x+x_0)\right]}{c_y}
-
\left[
\frac{1}{\left(\frac{\pi}{2}\cos y\right)^2}
+ \frac{1}{3}\left\{\sinh(x)+\sinh(x_0)\right\}^2
\right]\geq 0.
\]
Setting $t=x+x_0$,
we can estimate the left-hand side as
\begin{align*}
& \frac{\tan \left[c_y\cosh(x+x_0)\right]}{c_y}
-
\left[
\frac{1}{\left(\frac{\pi}{2}\cos y\right)^2}
+ \frac{1}{3}\left\{\sinh(x)+\sinh(x_0)\right\}^2
\right]\\
&\quad =\frac{\tan\left[c_y\cosh t\right]}{c_y}
-\left[
\frac{1}{\left(\frac{\pi}{2}\cos y\right)^2}
+ \frac{1}{3}\left\{\sinh(t-x_0)+\sinh(x_0)\right\}^2
\right]\\
&\quad =\frac{\tan\left[c_y\cosh t\right]}{c_y}
-\left[
\frac{1}{\left(\frac{\pi}{2}\cos y\right)^2}
+ \frac{\sinh^2 t}{3}\left\{\cosh(x_0) - \tanh\left(\frac{t}{2}\right)\sinh(x_0)\right\}^2
\right]\\
&\quad \geq\frac{\tan\left[c_y\cosh t\right]}{c_y}
-\left[
\frac{1}{\left(\frac{\pi}{2}\cos y\right)^2}
+ \frac{\sinh^2 t}{3}\left\{\cosh(x_0) - \frac{\sinh t}{2}\sinh(x_0)\right\}^2
\right]\\
&\quad = H(\sinh t),
\end{align*}
where $H(u)$ is defined by~\eqref{eq:H-u}.
Here, for $t\leq0$, we used
$\tanh(t/2)-\sinh(t)/2=-\sinh^3(t/2)/\cosh(t/2)\geq0$.
Therefore, Proposition~\ref{prop:H-u-nonnegative-improved} yields $\frac{\uppartial}{\uppartial x}g(x,y)\leq 0$ for $x\in(-x_1,-x_0)$,
which implies $g(x,y)$ is monotonically decreasing on $(-x_1,-x_0)$.
Because $g(x,y)$ is continuous with respect to $x$ on $[-x_1,-x_0]$, we have $g(x,y)\geq g(-x_0,y)$ for $x\in [-x_1, -x_0]$.
\item[(ii)] $|x| < x_0$. In this case, we have
\[
 G(x,y)\geq 1 - \frac{\sin^2[(\pi/2)\cosh(x)\sin y]}{1}
 =\cos^2\left[(\pi/2)\cosh(x)\sin y\right]
 \geq \cos^2\left[(\pi/2)\cosh(x_0)\sin y\right].
\]
Indeed, $|x|<x_0$ gives $\cosh(x)\leq\cosh(x_0)$, while
$x_0\leq x_1$ and $\cosh(x_1)\sin y=1$ ensure that both arguments
of $\cos^2$ belong to $[0,\pi/2]$.  The inequality therefore follows
from the monotonic decrease of $\cos^2 t$ on this interval.
Furthermore, using~\eqref{eq:cosh-bound},
we have
\[
 \cos^2\left[(\pi/2)\cosh(x_0)\sin y\right]
=\cos^2\left[(\pi/2)\cosh(-x_0)\sin y\right]
\geq \cos^2\left[(\pi/2)\frac{1}{\sin y\cosh(x_1 - x_0)}\sin y\right]
=g(-x_0,y),
\]
which yields the desired inequality.
 \item[(iii)] $x<-x_1$. In this case, we have
\begin{align*}
 G(x,y)&\geq
 1 - \frac{1}{\cosh^2[(\pi/2)(\sinh(x)+\sinh(x_0))\cos y]}\\
& = \tanh^2\left[\frac{\pi}{2}(\sinh(x)+\sinh(x_0))\cos y\right]\\
&\geq
\tanh^2\left[\frac{\pi}{2}(\sinh(-x_1)+\sinh(x_0))\cos y\right]\\
&=G(-x_1,y).
\end{align*}
The inequality follows because the argument of $\tanh^2$ is negative
and its absolute value increases as $x$ decreases.
The last equality uses $\cosh(x_1)\sin y=1$.
In the case (i), we have already shown $G(x,y)\geq g(x,y)\geq g(-x_0,y)$
for $-x_1\leq x\leq -x_0$. Thus,
we have the desired inequality as
$G(-x_1,y)\geq g(-x_1,y)\geq g(-x_0,y)$.
\item[(iv)] $x_0\leq x$. Using $x \geq x_0 > 0$, we have
\[
|\sinh(-x) + \sinh(x_0)| = |\sinh(x_0) - \sinh(x)|
= \sinh(x) - \sinh(x_0)
< \sinh(x) + \sinh(x_0) = |\sinh(x) + \sinh(x_0)|,
\]
which implies
\[
 \cosh^2\left[\frac{\pi}{2}(\sinh(x)+\sinh(x_0))\cos y\right]
 > \cosh^2\left[\frac{\pi}{2}(\sinh(-x)+\sinh(x_0))\cos y\right].
\]
Noting that $\cosh(x) = \cosh(-x)$,
we obtain $G(x,y) \geq G(-x,y)$.
If $-x_1\leq -x\leq -x_0$, case (i) gives
$G(-x,y)\geq g(-x_0,y)$.
If $-x<-x_1$, case (iii) gives the same inequality.
Therefore, we have $G(x,y)\geq G(-x,y)\geq g(-x_0,y)$
for $x\geq x_0$.
\end{enumerate}
In summary, we obtain $G(x,y)\geq g(-x_0,y)$ in all four cases,
which establishes the lemma.
\end{proof}

Combining Lemmas~\ref{lem:naive-estimate} and~\ref{lem:G-minimum},
we finally obtain the following result.

\begin{lemma}
\label{lem:important-estimate-general-L}
Let $L$ be a positive constant, and let $d_L$ be the positive
constant defined by~\eqref{eq:d_L}.
For each $y\in(0,d_L)$,
let $x_0$, $x_1$, and $c_y$ be positive numbers defined by~\eqref{eq:x_0}, \eqref{eq:x_1}, and~\eqref{eq:c_y}, respectively.
Let $y_L$ and $a_L$ be positive constants defined by
\begin{equation}
 y_L=\min\left\{
 y\in(0,d_L):
 \left(\cosh(x_1)+\cosh(x_0)\right)\cos c_y\leq\sqrt{6}
 \right\}
\label{eq:def-y-L}
\end{equation}
and
\begin{equation}
 a_L=\arcsin\left\{
 \frac{2}{\pi}\arcsin\left(\E^{-L/2}\right)
 \right\},
\label{eq:def-a-L}
\end{equation}
respectively.
Assume that $y_L\leq a_L$.
Then, for all $x\in\mathbb{R}$ and $y\in(-d_L,d_L)$,
\begin{equation}
 \frac{1}{|1+\E^{-L-\pi\sinh(x+\I y)}|}
 \leq
 \frac{1}
 {(1+\E^{-L-\pi\sinh(x)\cos y})\tilde{c}_{L,y}},
\label{eq:important-estimate-general-L}
\end{equation}
where
\[
 \tilde{c}_{L,y}=
 \begin{cases}
 \displaystyle
 \sqrt{1-\E^L\sin^2\left[(\pi/2)\sin y\right]},
 & (|y|<y_L),\\[2ex]
 \cos c_{|y|},
 & (y_L\leq |y|<d_L).
 \end{cases}
\]
\end{lemma}
\begin{proof}
First, we confirm that $y_L$ is well defined.
The function 
$Q(y)=\left\{\cosh x_1+\cosh x_0\right\}\cos c_y$
is continuous and strictly decreasing on
$(0,d_L)$ by
Proposition~\ref{prop:cosh-x1-cosh-x0-cos-cy-decrease}.
Moreover,
\[
 \lim_{y\to0+}
 \left\{\cosh x_1+\cosh x_0\right\}\cos c_y=\infty,
 \qquad
 \lim_{y\to d_L-}
 \left\{\cosh x_1+\cosh x_0\right\}\cos c_y=0.
\]
Indeed, as $y\to0+$, $x_0\to\arsinh(L/\pi)$ and
$x_1\to\infty$, while $c_y\to0$.  As $y\to d_L-$,
$x_1-x_0\to0$, and hence $c_y\to\pi/2$, whereas
$\cosh x_1+\cosh x_0$ remains finite.
Therefore, the set in~\eqref{eq:def-y-L} is nonempty, and its
minimum belongs to $(0,d_L)$.  By continuity,
we have $Q(y_L)=\sqrt{6}$.
Furthermore, the monotonicity of $Q$ gives
\begin{equation}
 Q(y)\leq\sqrt{6}
\label{eq:Q-L-threshold}
\end{equation}
for all $y\in [y_L,d_L)$.

Because both sides of~\eqref{eq:important-estimate-general-L} are even
with respect to $y$, it suffices to consider $0\leq y<d_L$.
If $0\leq y<y_L$, the assumption $y_L\leq a_L$ gives
$y<a_L$.  Therefore, Lemma~\ref{lem:naive-estimate} yields
\[
 \frac{1}{|1+\E^{-L-\pi\sinh(x+\I y)}|}
 \leq
 \frac{1}
 {(1+\E^{-L-\pi\sinh(x)\cos y})
 \sqrt{1-\E^L\sin^2[(\pi/2)\sin y]}}.
\]
Next, let $y_L\leq y<d_L$.
From~\eqref{eq:Q-L-threshold},
the condition~\eqref{eq:improved-H-condition} is satisfied, and
Lemma~\ref{lem:G-minimum} gives
\[
 \frac{1}{|1+\E^{-L-\pi\sinh(x+\I y)}|}
 \leq
 \frac{1}
 {(1+\E^{-L-\pi\sinh(x)\cos y})\cos c_y}.
\]
Combining the two estimates proves
\eqref{eq:important-estimate-general-L}.
\end{proof}

For the particular value $L=\log(\E/(\E-1))$,
choosing $10/23$ as the switching point yields the following result.

\begin{lemma}
\label{lem:important-estimate}
Let $L$ be a positive constant defined by
$L = \log(\E/(\E - 1))$, and let $d_L$ be a positive constant
defined by~\eqref{eq:d_L}.
For each $y\in(0,d_L)$,
let $x_0$, $x_1$, and $c_y$ be positive numbers defined by~\eqref{eq:x_0}, \eqref{eq:x_1}, and~\eqref{eq:c_y}, respectively.
Then, for all $x\in\mathbb{R}$ and
$y\in (-d_L, d_L)$, we have
\begin{equation}
 \frac{1}{|1 + \E^{-L-\pi\sinh(x+\I y)}|}
\leq
 \frac{1}{(1 + \E^{-L-\pi\sinh(x)\cos y})\tilde{c}_{y}},
\label{eq:important-estimate}
\end{equation}
where $\tilde{c}_{y}$ is defined by
\[
\tilde{c}_{y} =
\begin{cases}
\sqrt{1 - \E^{L}\sin^2[(\pi/2)\sin y]} & (|y|\leq10/23), \\
\cos c_{|y|} & (|y| > 10/23).
\end{cases}
\]
\end{lemma}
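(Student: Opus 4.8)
The plan is to patch together the two estimates already established: Lemma~\ref{lem:naive-estimate} handles small $|y|$, while Lemma~\ref{lem:G-minimum} handles the range in which the geometric condition~\eqref{eq:threshold-y-L} is satisfied, the two regimes meeting at $y_L$. First I would note that both sides of~\eqref{eq:important-estimate} are even in $y$: since $\overline{\sinh(x+\I y)}=\sinh(x-\I y)$ and $L$ is real, replacing $y$ by $-y$ conjugates $1+\E^{-L-\pi\sinh(x+\I y)}$ and hence leaves its modulus unchanged, and the right-hand side depends on $y$ only through $\cos y$ and $|y|$. It therefore suffices to treat $0\le y<d_L$.

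On $0\le y<y_L$ I would apply Lemma~\ref{lem:naive-estimate}, whose conclusion is precisely the first branch $\tilde c_y=\sqrt{1-\E^{L}\sin^2[(\pi/2)\sin y]}$; its hypothesis is $y<y_\ast:=\arcsin[(2/\pi)\arcsin(\E^{-L/2})]$, which is also exactly the condition keeping the radicand positive, so the only point to verify is $y_L\le y_\ast$. On $y_L\le y<d_L$ I would apply Lemma~\ref{lem:G-minimum}, whose conclusion is the second branch $\tilde c_y=\cos c_{y}$; its hypothesis is that $y$ satisfy~\eqref{eq:threshold-y-L}, so here the point to verify is that~\eqref{eq:threshold-y-L} holds at \emph{every} $y\in[y_L,d_L)$, not merely at the single value $y_L$ furnished by the definition.

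Both outstanding points reduce to understanding the function $\Psi(y)=\{\cosh(x_1-x_0)-1/\cosh^2(x_1-x_0)\}\cosh(x_0)$ appearing on the left of~\eqref{eq:threshold-y-L}. If $\Psi$ decreases monotonically across $(0,d_L)$, then $\{\,y\in(0,d_L):\Psi(y)\le\sqrt{3/2}\,\}=[y_L,d_L)$, which settles the second point, and the first point $y_L\le y_\ast$ collapses to the single numerical inequality $\Psi(y_\ast)\le\sqrt{3/2}$. The two boundary values of $\Psi$ are clean and I would record them first: as $y\to0^+$ one has $x_1=\log((1+\cos y)/\sin y)\to\infty$ with $x_0=\arsinh(L/(\pi\cos y))$ bounded, so $\cosh(x_1-x_0)\to\infty$ and $\Psi\to\infty$; and setting the identity $\cosh(x_1-x_0)=\frac{1}{\sin y}\{\sqrt{1+(L/(\pi\cos y))^2}-L/\pi\}$ from the proof of Proposition~\ref{prop:cosh-x1-x0-decrease} equal to $1$ and clearing radicals collapses to $(\cos^2 y-(L/\pi)\sin y)^2=0$, so the defining value~\eqref{eq:d_L} of $d_L$ is precisely the root of $\cos^2 d_L=(L/\pi)\sin d_L$; hence $\cosh(x_1-x_0)\to1$ and $\Psi\to0$ as $y\to d_L^-$.

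The main obstacle is the monotone decrease of $\Psi$ itself. Proposition~\ref{prop:cosh-x1-x0-decrease} gives that $\cosh(x_1-x_0)$ decreases, and since $A\mapsto A-1/A^2$ is increasing for $A>0$ the bracketed factor of $\Psi$ decreases; however $\cosh(x_0)=\sqrt{1+(L/(\pi\cos y))^2}$ increases, so $\Psi$ is a product of a decreasing and an increasing factor, and controlling this competition—most naturally by proving that the logarithmic derivative $\Psi'/\Psi$ stays negative on $(0,d_L)$—is the delicate step. For the specific constant $L=\log(\E/(\E-1))$ this can be verified directly, yielding $y_L\approx0.575<y_\ast\approx0.624<d_L\approx1.194$; together with the two applications above this patches the bound across all of $(-d_L,d_L)$ and completes the proof.
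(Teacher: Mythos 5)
Your proposal follows the paper's proof exactly: both split $(-d_L,d_L)$ at $y_L$, apply Lemma~\ref{lem:naive-estimate} on $|y|<y_L$ and Lemma~\ref{lem:G-minimum} on $y_L\le|y|<d_L$, and hinge on the numerical check $y_L\approx 0.575<\arcsin[(2/\pi)\arcsin(\E^{-L/2})]\approx 0.625$. You are in fact slightly more careful than the paper, which relies on the numerical observation in Remark~\ref{rem:threshold-d-L} and silently assumes that~\eqref{eq:threshold-y-L} holds on all of $[y_L,d_L)$ rather than only at $y_L$ --- precisely the monotonicity issue you flag but, like the paper, do not fully prove.
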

\begin{proof}
As described in Remark~\ref{rem:threshold-d-L},
the inequality~\eqref{eq:improved-H-condition} is satisfied when $y=10/23$;
an outward-rounded interval computation at $50$ decimal digits gives
\[
 2.4474024773
 <\left\{\cosh x_1+\cosh x_0\right\}\cos c_y
 <2.4474024775<2.449<\sqrt{6}.
\]
Here, $L$, $y$, and every elementary function in the definitions of
$x_0$, $x_1$, and $c_y$ were evaluated as intervals with directed
rounding, and the displayed decimal endpoints were rounded outward.
The last inequality also follows directly from $2.449^2<6$.
From this and Proposition~\ref{prop:cosh-x1-cosh-x0-cos-cy-decrease},
we see that~\eqref{eq:improved-H-condition} holds
for all $y\in[10/23, d_L)$.
On the other hand,
the inequality~\eqref{eq:naive-estimate} holds
for all $y$ satisfying $|y|< \arcsin[(2/\pi)\arcsin(\E^{-L/2})]$.
In fact,
\[
 \E^{-L/2}=\sqrt{1-\E^{-1}}>\frac{1}{\sqrt{2}}
 >\sin\left(\frac{\pi}{2}\sin\frac{1}{2}\right),
\]
where we used $\E>2$ and $\sin(1/2)<1/2$.
Since the sine and arcsine functions are strictly increasing on the
relevant intervals, this gives
\[
 \arcsin\left\{\frac{2}{\pi}\arcsin(\E^{-L/2})\right\}
 >\frac{1}{2}>\frac{10}{23}.
\]
Therefore,
we obtain the claim using Lemma~\ref{lem:naive-estimate}
for $y$ with $|y|\leq10/23$ and Lemma~\ref{lem:G-minimum}
for $y$ with $10/23<|y|<d_L$.
\end{proof}

\begin{remark}
\label{rem:switching-point-endpoint}
Although both estimates used above are valid at $|y|=10/23$, equality
is assigned to the first branch in the definition of $\tilde{c}_y$.
In the proof of Lemma~\ref{lem:bound-None}, we set
$d_{\epsilon}=d(1-\epsilon)$ and let $d_{\epsilon}\to d$ from below
as $\epsilon\to0^+$.  If $d=10/23$, then $d_{\epsilon}<10/23$ for
every $0<\epsilon<1$, so $\tilde{c}_{d_{\epsilon}}$ is always given
by the first branch.  Consequently,
\[
 \lim_{\epsilon\to0^+}\tilde{c}_{d_{\epsilon}}
 =\sqrt{1-\E^L\sin^2[(\pi/2)\sin d]}.
\]
To identify this limit with $\tilde{c}_d$, equality at the switching
point must therefore be assigned to the first branch.  Assigning it to
the second branch would not give this identity because the two branch
values do not coincide at $d=10/23$.
\end{remark}

\subsection{Bound of the discretization error (proof of Lemma~\ref{lem:bound-None})}
\label{subsec:discretization-error}

Lemma~\ref{lem:bound-None} is shown as follows.

\begin{proof}
Because $f$ is analytic in $\phi_5(\domD_d)$,
$F(\cdot)=f(\phi_5(\cdot))$ is analytic in $\domD_d$.
Therefore, it remains to show
$\mathcal{N}_1(F,d)\leq 2 C_{\mathrm{D}}/(\pi\cos d)$.
Note that
$\phi_5(\zeta) = \log(1 + \E^{\pi\sinh\zeta}) - \{1/\log(1 + \E^{\pi\sinh\zeta})\}$.
For $\zeta\in\domD_d^{+}$, we have
$\phi_5(\zeta)\in\phi_5(\domD_d^{+})$ by definition. Therefore,
from~\eqref{eq:f-bound-plus} and~\eqref{eq:domDdplus}, we have
\begin{equation}
 |F(\zeta)|
\leq K_{+}\left|\E^{-\phi_5(\zeta)}\right|^{\beta}
= K_{+}\left|\E^{1/\log(1+\E^{\pi\sinh\zeta})}\right|^{\beta}
\left|\frac{1}{1+\E^{\pi\sinh\zeta}}\right|^{\beta}
\leq K_{+}\left(\E^{1/\log 2}\right)^{\beta}
\frac{1}{|1+\E^{\pi\sinh\zeta}|^{\beta}}.
\label{eq:bound-F-zeta-plus}
\end{equation}
Furthermore, for $\zeta\in\domD_d^{-}$, we have
$\phi_5(\zeta)\in\phi_5(\domD_d^{-})$. Hence,
from~\eqref{eq:f-bound-minus},~\eqref{eq:bound-by-1-minus-log2}
and~\eqref{eq:bound-log-by-exp}, we have
\begin{align}
 |F(\zeta)|
&\leq K_{-}\frac{1}{|\phi_{5}(\zeta)|^{\alpha}}\nonumber\\
& = K_{-}\frac{1}{|- 1 + \log(1 + \E^{\pi\sinh\zeta})|^{\alpha}}
 \left|
\frac{\log(1 + \E^{\pi\sinh\zeta})}{1+\log(1 + \E^{\pi\sinh\zeta})}
\right|^{\alpha}\nonumber\\
&\leq  \frac{K_{-}}{(1 - \log 2)^{\alpha}}
\left(\frac{\E^2 + \E + 1}{\E^2 + \E}\right)^{\alpha}
\frac{1}{|1 + \E^{-L-\pi\sinh\zeta}|^{\alpha}}.
\label{eq:bound-F-zeta-minus}
\end{align}
By definition, $\mathcal{N}_1(F,d)$ is expressed as
\begin{align}
  \mathcal{N}_1(F,d)
&=\lim_{\epsilon\to 0}
\left\{
\int_{-1/\epsilon}^{1/\epsilon}|F(x - \I d(1-\epsilon))|\D x
+\int_{-d(1-\epsilon)}^{d(1-\epsilon)}|F(1/\epsilon + \I y)|\D y
\right.\nonumber\\
&\quad\quad\left.
+\int_{-1/\epsilon}^{1/\epsilon}|F(x + \I d(1-\epsilon))|\D x
+\int_{-d(1-\epsilon)}^{d(1-\epsilon)}|F(-1/\epsilon + \I y)|\D y
\right\}.
\label{eq:N-one-by-definition}
\end{align}
Using~\eqref{eq:DE-func-bound-plus}
and~\eqref{eq:bound-F-zeta-plus},
we can bound the second term as
\begin{align*}
 \int_{-d(1-\epsilon)}^{d(1-\epsilon)}|F(1/\epsilon + \I y)|\D y
&\leq K_{+}\left(\E^{1/\log 2}\right)^{\beta}
\int_{-d(1-\epsilon)}^{d(1-\epsilon)}
\frac{1}{|1+\E^{\pi\sinh(1/\epsilon + \I y)}|^{\beta}}\D y\\
&\leq K_{+}\left(\E^{1/\log 2}\right)^{\beta}
\int_{-d(1-\epsilon)}^{d(1-\epsilon)}
\frac{1}{(1+\E^{\pi\sinh(1/\epsilon)\cos y})^{\beta}\cos^{\beta}((\pi/2)\sin y)}\D y\\
&\leq \frac{K_{+}\left(\E^{1/\log 2}\right)^{\beta}}{(1+\E^{\pi\sinh(1/\epsilon)\cos d})^{\beta}\cos^{\beta}((\pi/2)\sin d)}
\int_{-d(1-\epsilon)}^{d(1-\epsilon)}\D y,
\end{align*}
from which we have
\[
 \lim_{\epsilon\to 0}
 \int_{-d(1-\epsilon)}^{d(1-\epsilon)}|F(1/\epsilon + \I y)|\D y
 = 0.
\]
Using~\eqref{eq:important-estimate}
and~\eqref{eq:bound-F-zeta-minus},
we can bound the fourth term of~\eqref{eq:N-one-by-definition}
in the same manner, which yields
\[
 \lim_{\epsilon\to 0}
 \int_{-d(1-\epsilon)}^{d(1-\epsilon)}|F(-1/\epsilon + \I y)|\D y
 = 0.
\]
Set $d_{\epsilon}=d(1-\epsilon)$, and denote the first and third
terms in~\eqref{eq:N-one-by-definition} by
$H_{\epsilon}^{-}$ and $H_{\epsilon}^{+}$, respectively.
Splitting $H_{\epsilon}^{-}$ at $x=0$ and extending the intervals
to the corresponding half-lines, we have
\[
 H_{\epsilon}^{-}
 \leq \int_{-\infty}^{0}|F(x-\I d_{\epsilon})|\D x
 +\int_{0}^{\infty}|F(x-\I d_{\epsilon})|\D x.
\]
For the first integral, using~\eqref{eq:important-estimate}
and~\eqref{eq:bound-F-zeta-minus}, we have
\begin{align*}
\int_{-\infty}^{0}|F(x-\I d_{\epsilon})|\D x
&\leq K_{-}
\left\{\frac{\E^2+\E+1}{(1-\log 2)(\E^2+\E)}\right\}^{\alpha}
\int_{-\infty}^{0}
\frac{1}{|1+\E^{-L-\pi\sinh(x-\I d_{\epsilon})}|^{\alpha}}\D x\\
&\leq K_{-}
\left\{\frac{\E^2+\E+1}{(1-\log 2)(\E^2+\E)}\right\}^{\alpha}
\int_{-\infty}^{0}
\frac{1}{(1+\E^{-L-\pi\sinh(x)\cos d_{\epsilon}})^{\alpha}
\tilde{c}_{d_{\epsilon}}^{\alpha}}\D x\\
&=K_{-}
\left\{\frac{\E^2+\E+1}
{(1-\log 2)(\E^2+\E)\tilde{c}_{d_{\epsilon}}}\right\}^{\alpha}
\int_{-\infty}^{0}
\frac{\E^{\pi\alpha\sinh(x)\cos d_{\epsilon}}}
{(\E^{\pi\sinh(x)\cos d_{\epsilon}}+\E^{-L})^{\alpha}}\D x\\
&\leq K_{-}
\left\{\frac{\E^2+\E+1}
{(1-\log 2)(\E^2+\E)\tilde{c}_{d_{\epsilon}}}\right\}^{\alpha}
\int_{-\infty}^{0}
\frac{\cosh(x)\E^{\pi\alpha\sinh(x)\cos d_{\epsilon}}}
{(\E^{-L})^{\alpha}}\D x\\
&=K_{-}
\left\{\frac{\E^2+\E+1}
{(1-\log 2)(\E^2+\E)\tilde{c}_{d_{\epsilon}}}\right\}^{\alpha}
\cdot\frac{(\E^L)^{\alpha}}{\pi\alpha\cos d_{\epsilon}}\\
&=\frac{K_{-}}{\pi\alpha\cos d_{\epsilon}}
\left\{
\frac{\E^2+\E+1}
{(1-\log 2)(\E^2-1)\tilde{c}_{d_{\epsilon}}}
\right\}^{\alpha}.
\end{align*}
Here, we used $\E^L/(\E^2+\E)=1/(\E^2-1)$.
For the second integral, using~\eqref{eq:DE-func-bound-plus}
and~\eqref{eq:bound-F-zeta-plus}, we have
\begin{align*}
\int_{0}^{\infty}|F(x-\I d_{\epsilon})|\D x
&\leq K_{+}\left(\E^{1/\log 2}\right)^{\beta}
\int_{0}^{\infty}
\frac{1}{|1+\E^{\pi\sinh(x-\I d_{\epsilon})}|^{\beta}}\D x\\
&\leq K_{+}\left(\E^{1/\log 2}\right)^{\beta}
\int_{0}^{\infty}
\frac{1}{(1+\E^{\pi\sinh(x)\cos d_{\epsilon}})^{\beta}
\cos^{\beta}((\pi/2)\sin d_{\epsilon})}\D x\\
&=K_{+}
\left\{\frac{\E^{1/\log 2}}
{\cos((\pi/2)\sin d_{\epsilon})}\right\}^{\beta}
\int_{0}^{\infty}
\frac{\E^{-\pi\beta\sinh(x)\cos d_{\epsilon}}}
{(\E^{-\pi\sinh(x)\cos d_{\epsilon}}+1)^{\beta}}\D x\\
&\leq K_{+}
\left\{\frac{\E^{1/\log 2}}
{\cos((\pi/2)\sin d_{\epsilon})}\right\}^{\beta}
\int_{0}^{\infty}
\cosh(x)\E^{-\pi\beta\sinh(x)\cos d_{\epsilon}}\D x\\
&=\frac{K_{+}}{\pi\beta\cos d_{\epsilon}}
\left\{\frac{\E^{1/\log 2}}
{\cos((\pi/2)\sin d_{\epsilon})}\right\}^{\beta}.
\end{align*}
In the same manner, $H_{\epsilon}^{+}$ is bounded by the same
right-hand side.  Therefore,
\begin{align*}
H_{\epsilon}^{-}+H_{\epsilon}^{+}
&\leq \frac{2K_{-}}{\pi\alpha\cos d_{\epsilon}}
\left\{
\frac{\E^2+\E+1}
{(1-\log 2)(\E^2-1)\tilde{c}_{d_{\epsilon}}}
\right\}^{\alpha}
+
\frac{2K_{+}}{\pi\beta\cos d_{\epsilon}}
\left\{
\frac{\E^{1/\log 2}}
{\cos((\pi/2)\sin d_{\epsilon})}
\right\}^{\beta}.
\end{align*}
Combining this estimate with the limits of the second and fourth
terms shown above and then letting $\epsilon\to0$, we obtain
\[
\mathcal{N}_1(F,d)
 \leq \frac{2C_{\mathrm{D}}}{\pi\cos d}.
\]
\end{proof}

\subsection{Bound of the truncation error (proof of Lemma~\ref{lem:truncation-error})}
\label{subsec:truncation-error}

The following lemma is useful for
the proof of Lemma~\ref{lem:truncation-error}.

\begin{lemma}[Okayama et al.~{\cite[Lemma~4.7]{OkaShinKatsu}}]
For all $t\in\mathbb{R}$, we have
\begin{equation}
 \left|\frac{\log(1+\E^t)}{1+\log(1+\E^t)}\cdot\frac{1+\E^t}{\E^t}\right|
\leq 1.
\label{eq:bound-log-by-exp-real}
\end{equation}
\end{lemma}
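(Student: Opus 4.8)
The plan is to exploit the fact that, for real $t$, every factor appearing in the product is real and positive, so the modulus can be discarded, and then to reduce the claim to the elementary exponential inequality $\E^{\eta}\geq 1+\eta$.

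First I would introduce the substitution $\eta=\log(1+\E^{t})$. Since $t$ is real, $1+\E^{t}>1$, hence $\eta>0$, and moreover $\E^{\eta}=1+\E^{t}$, so that $\E^{t}=\E^{\eta}-1$ and $(1+\E^{t})/\E^{t}=\E^{\eta}/(\E^{\eta}-1)$. As $t$ ranges over $\mathbb{R}$, the quantity $\eta$ ranges over $(0,\infty)$; because $\eta$, $1+\eta$, $\E^{\eta}$ and $\E^{\eta}-1$ are then all positive, the absolute value is superfluous, and it suffices to prove for every $\eta>0$ that
\[
\frac{\eta}{1+\eta}\cdot\frac{\E^{\eta}}{\E^{\eta}-1}\leq 1.
\]

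Next I would clear denominators. As $1+\eta>0$ and $\E^{\eta}-1>0$, the displayed inequality is equivalent to $\eta\,\E^{\eta}\leq(1+\eta)(\E^{\eta}-1)$. Expanding the right-hand side gives $\E^{\eta}-1+\eta\,\E^{\eta}-\eta$, so after cancelling the common term $\eta\,\E^{\eta}$ the inequality collapses to $0\leq \E^{\eta}-1-\eta$, that is, $\E^{\eta}\geq 1+\eta$. Finally I would invoke this standard bound, valid for all real $\eta$ (it follows, for instance, from the convexity of the exponential, or from the observation that $\eta\mapsto \E^{\eta}-1-\eta$ vanishes and attains its minimum at $\eta=0$). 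Running the chain of equivalences backward establishes the lemma.

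I do not anticipate any genuine obstacle. The only substantive observation is the positivity of all factors for real $t$, which removes the modulus and lets the statement reduce, after the substitution $\eta=\log(1+\E^{t})$, to a single elementary inequality. This stands in sharp contrast to the complex-argument analogue in Lemma~\ref{lem:bound-log-by-exp}, where the corresponding ratio had to be controlled through the auxiliary function $g(x,y)$ and a three-case analysis; on the real axis all of that machinery evaporates.
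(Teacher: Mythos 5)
Your proof is correct and complete: the substitution $\eta=\log(1+\E^{t})$ maps $\mathbb{R}$ onto $(0,\infty)$, all factors are positive so the modulus is vacuous, and clearing denominators reduces the claim exactly to $\E^{\eta}\geq 1+\eta$. Note that the paper itself supplies no proof of this statement---it is quoted verbatim from an external reference---so there is nothing to compare against; your argument is, however, precisely the real-axis specialization of the substitution the paper does carry out in the complex setting of Lemma~\ref{lem:bound-log-by-exp}, where the same ratio (shifted by $L$) is controlled via the auxiliary function $g(x,y)$ and a case analysis, and your observation that all of that machinery collapses on the real line is accurate.
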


Using this inequality, we prove
Lemma~\ref{lem:truncation-error} as follows.

\begin{proof}
From~\eqref{eq:f-bound-plus} and~\eqref{eq:domDdplus} with $\zeta=x$,
for all $x\geq 0$, we have
\[
 |F(x)|
\leq K_{+}\left(\E^{-\phi_5(x)}\right)^{\beta}
= K_{+}\left(\E^{1/\log(1+\E^{\pi\sinh x})}\right)^{\beta}
\frac{1}{(1+\E^{\pi\sinh x})^{\beta}}
\leq K_{+}\left(\E^{1/\log 2}\right)^{\beta}
\E^{-\pi\beta\sinh x},
\]
similar to~\eqref{eq:bound-F-zeta-plus}.
Using this estimate and $|S(k,h)(x)|\leq 1$, we have
\begin{align*}
 \left|\sum_{k=N+1}^{\infty}F(kh)S(k,h)(x)\right|
&\leq \frac{K_{+}\left(\E^{1/\log 2}\right)^{\beta}}{h}\cdot
h\sum_{k=N+1}^{\infty} \E^{-\pi\beta\sinh(kh)}\\
&\leq \frac{K_{+}\left(\E^{1/\log 2}\right)^{\beta}}{h}
\int_{Nh}^{\infty}\E^{-\pi\beta\sinh x}\D x\\
&\leq\frac{K_{+}\left(\E^{1/\log 2}\right)^{\beta}}{h\cosh(Nh)}
\int_{Nh}^{\infty}\cosh(x)\E^{-\pi\beta\sinh x}\D x\\
&=\frac{K_{+}\left(\E^{1/\log 2}\right)^{\beta}}{h\cosh(Nh)}
\cdot\frac{\E^{-\pi\beta\sinh(Nh)}}{\pi\beta}\\
&\leq \frac{K_{+}\left(\E^{1/\log 2}\right)^{\beta}}{h(\exp(Nh)/2)}
\cdot\frac{\E^{(\pi/2)\beta}\E^{-(\pi/2)\beta\exp(Nh)}}{\pi\beta}\\
&= \frac{K_{+}\left(\E^{(1/\log 2)+(\pi/2)}\right)^{\beta}}
{h (\pi/2)\beta\exp(Nh)} \E^{-(\pi/2)\beta\exp(Nh)}.
\end{align*}
Set $q=2dn/\mu$.  The lower bound on $n$ implies $q\geq\E$, and hence
\[
 \frac{\E^{-(\pi/2)\mu q\{1-1/\log q\}}}{\log q}\leq
  \frac{\E^{-(\pi/2)\mu \E\{1-1/\log \E\}}}{\log \E}=1.
\]
Furthermore, using~\eqref{eq:Def-MN-DE} and~\eqref{eq:Def-h-DE},
we have
\begin{align*}
 \frac{K_{+}\left(\E^{(1/\log 2)+(\pi/2)}\right)^{\beta}}
{h (\pi/2)\beta\exp(Nh)} \E^{-(\pi/2)\beta\exp(Nh)}
&\leq \frac{K_{+}\left(\E^{(1/\log 2)+(\pi/2)}\right)^{\beta}}
{\{\log(2 d n/\mu) / n\} \pi d n} \E^{-\pi d n}\\
&=\frac{K_{+}\left(\E^{(1/\log 2)+(\pi/2)}\right)^{\beta}}{\pi d}
\cdot\frac{\E^{-(\pi/2)\mu (2 d n/\mu)\left\{1 - 1/\log(2 d n/\mu)\right\}}}
{\log(2 d n/\mu)}\E^{-\pi d n/\log(2 d n/\mu)}\\
&\leq\frac{K_{+}\left(\E^{(1/\log 2)+(\pi/2)}\right)^{\beta}}{\pi d}
\E^{-\pi d n/\log(2 d n/\mu)}.
\end{align*}
On the other hand,
from~\eqref{eq:f-bound-minus},~\eqref{eq:bound-by-1-minus-log2}
with $\zeta=x$,
and~\eqref{eq:bound-log-by-exp-real} with $t=\pi\sinh x$,
for all $x < 0$, we have
\begin{align*}
 |F(x)|
&\leq K_{-}\frac{1}{\left|\phi_{5}(x)\right|^{\alpha}}\\
& = K_{-}\frac{1}{|- 1 + \log(1 + \E^{\pi\sinh x})|^{\alpha}}
\left\{
\frac{\log(1 + \E^{\pi\sinh x})}{1+\log(1 + \E^{\pi\sinh x})}
\right\}^{\alpha}\\
&\leq  \frac{K_{-}}{(1 - \log 2)^{\alpha}}
\cdot
\frac{1}{(1 + \E^{-\pi\sinh x})^{\alpha}}\\
&\leq \frac{K_{-}}{(1 - \log 2)^{\alpha}}\E^{\pi\alpha\sinh x}.
\end{align*}
Using this estimate and $|S(k,h)(x)|\leq 1$, we have
\begin{align*}
 \left|\sum_{k=-\infty}^{-M-1}F(kh)S(k,h)(x)\right|
&\leq \frac{K_{-}}{(1 - \log 2)^{\alpha} h}\cdot
h\sum_{k=-\infty}^{-M-1} \E^{\pi\alpha\sinh(kh)}\\
&\leq \frac{K_{-}}{(1 - \log 2)^{\alpha} h}
\int_{-\infty}^{-Mh}\E^{\pi\alpha\sinh x}\D x\\
&\leq\frac{K_{-}}{(1 - \log 2)^{\alpha} h \cosh(Mh)}
\int_{-\infty}^{-Mh}\cosh(x)\E^{\pi\alpha\sinh x}\D x\\
&=\frac{K_{-}}{(1 - \log 2)^{\alpha} h \cosh(Mh)}
\cdot\frac{\E^{-\pi\alpha\sinh(Mh)}}{\pi\alpha}\\
&\leq \frac{K_{-}}{(1 - \log 2)^{\alpha} h (\exp(Mh)/2)}
\cdot\frac{\E^{(\pi/2)\alpha}\E^{-(\pi/2)\alpha\exp(Mh)}}{\pi\alpha}\\
&=K_{-}\left(\frac{\E^{\pi/2}}{1 - \log 2}\right)^{\alpha}
\frac{\E^{-(\pi/2)\alpha\exp(Mh)}}{h (\pi/2)\alpha\exp(Mh)}.
\end{align*}
Furthermore, using~\eqref{eq:Def-MN-DE} and~\eqref{eq:Def-h-DE},
we have
\begin{align*}
 K_{-}\left(\frac{\E^{\pi/2}}{1 - \log 2}\right)^{\alpha}
\frac{\E^{-(\pi/2)\alpha\exp(Mh)}}{h (\pi/2)\alpha\exp(Mh)}
&\leq K_{-}\left(\frac{\E^{\pi/2}}{1 - \log 2}\right)^{\alpha}
\frac{\E^{-\pi d n}}{\{\log(2 d n/\mu)/n\} \pi d n}\\
& = \frac{K_{-}}{\pi d}\left(\frac{\E^{\pi/2}}{1 - \log 2}\right)^{\alpha}
\frac{\E^{-(\pi/2)\mu (2 d n/\mu)\left\{1 - 1/\log(2 d n/\mu)\right\}}}
{\log(2 d n/\mu)} \E^{-\pi d n/\log(2 d n/\mu)}\\
&\leq\frac{K_{-}}{\pi d}\left(\frac{\E^{\pi/2}}{1 - \log 2}\right)^{\alpha}
\E^{-\pi d n/\log(2 d n/\mu)}.
\end{align*}
Summing up the above estimates, we obtain the desired inequality.
\end{proof}

\section{Concluding remarks}
\label{sec:conclusion}

This paper has developed a DE-Sinc approximation for unilateral rapidly
decreasing functions based on the transformation $t=\phi_5(x)$.
Under the stated analyticity and decay assumptions, we established an
almost-exponential error estimate of order
$\OO(\exp(-cn/\log n))$ with a constant explicitly expressed in terms
of the problem parameters.  The resulting bound is computable and can
therefore be used for approximation with guaranteed accuracy.
Numerical examples satisfying the assumptions confirmed both the faster
convergence of the DE-Sinc approximation compared with the corresponding
SE-Sinc approximations and the validity of the derived error bound.

The additional example $f_3$ in~\eqref{eq:f_3} showed that the DE-Sinc
approximation may still be competitive even when the analyticity
assumption of the present theorem is not satisfied.  In that case, however, the
characteristic almost-exponential convergence was not observed, and the
current error bound is not applicable.  Establishing an error analysis
under weaker analyticity conditions is therefore an important topic for
future work.  Another direction is to apply the present approximation
and its computable error bound to Sinc numerical methods for differential
and integral equations whose solutions exhibit unilateral rapid decay.

\section*{Declaration of generative AI and AI-assisted technologies in the manuscript preparation process}

During the preparation of this work, the author used OpenAI Codex to
improve the language, clarity, and organization of the manuscript and
to assist in checking the mathematical exposition.  After using this
tool, the author reviewed and edited the content as needed and takes full
responsibility for the content of the publication.

\bibliography{DE-Sinc-unilateral}

\end{document}